   \newcommand\SkipToFmtEnd{}%
   \newcommand\EndFmtInput{}%
   \long\def\SkipToFmtEnd#1\EndFmtInput{}%
\newcommand\ReadOnlyOnce[1]{\@ifundefined{#1}{\@namedef{#1}{}}\SkipToFmtEnd}
\DeclareFontFamily{OT1}{cmtex}{}
\DeclareFontShape{OT1}{cmtex}{m}{n}
  {<5><6><7><8>cmtex8
   <9>cmtex9
   <10><10.95><12><14.4><17.28><20.74><24.88>cmtex10}{}
\DeclareFontShape{OT1}{cmtex}{m}{it}
  {<-> ssub * cmtt/m/it}{}
\DeclareFontShape{OT1}{cmtt}{bx}{n}
  {<5><6><7><8>cmtt8
   <9>cmbtt9
   <10><10.95><12><14.4><17.28><20.74><24.88>cmbtt10}{}
\DeclareFontShape{OT1}{cmtex}{bx}{n}
  {<-> ssub * cmtt/bx/n}{}
\newcommand{\Conid}[1]{\mathit{#1}}
\newcommand{\Varid}[1]{\mathit{#1}}
\newcommand{\anonymous}{\kern0.06em \vbox{\hrule\@width.5em}}
\newcommand{\bind}{\mathbin{>\!\!\!>\mkern-6.7mu=}}
\renewcommand{\leq}{\leqslant}
\newdimen\mathindent\mathindent\leftmargini}%
\def\resethooks{%
  \global\let\SaveRestoreHook\empty
  \global\let\ColumnHook\empty}
\newcommand*{\savecolumns}[1][default]%
  {\g@addto@macro\SaveRestoreHook{\savecolumns[#1]}}
\newcommand*{\restorecolumns}[1][default]%
  {\g@addto@macro\SaveRestoreHook{\restorecolumns[#1]}}
\newcommand*{\aligncolumn}[2]%
  {\g@addto@macro\ColumnHook{\column{#1}{#2}}}
\newcommand{\onelinecommentchars}{\quad-{}- }
\newcommand{\commentbeginchars}{\enskip\{-}
\newcommand{\commentendchars}{-\}\enskip}
\newcommand{\visiblecomments}{%
  \let\onelinecomment=\onelinecommentchars
  \let\commentbegin=\commentbeginchars
  \let\commentend=\commentendchars}
\newcommand{\invisiblecomments}{%
  \let\onelinecomment=\empty
  \let\commentbegin=\empty
  \let\commentend=\empty}
\newlength{\blanklineskip}
\newcommand{\hsindent}[1]{\quad}
\let\hspre\empty
\let\hspost\empty
\newcommand{\hsnewpar}[1]%
  {{\parskip=0pt\parindent=0pt\par\vskip #1\noindent}}
\newcommand{\hscodestyle}{}
\newcommand{\sethscode}[1]%
  {\expandafter\let\expandafter\hscode\csname #1\endcsname
   \expandafter\let\expandafter\endhscode\csname end#1\endcsname}
   \let\hspre\(\let\hspost\)%
   \let\hspre\(\let\hspost\)%
\newcommand{\plainhs}{\sethscode{plainhscode}}
\def\codeframewidth{\arrayrulewidth}
   \let\endoflinesave=\\
   \framedhslinecorrect\endoflinesave{.5ex}\hline
\newcommand{\framedhslinecorrect}[2]%
  {#1[#2]}
\def\column##1##2{}%
   \newcommand\>[1][]{}\newcommand\<[1][]{}\newcommand\\[1][]{}%
   \def\fromto##1##2##3{##3}%
\let\orighscode=\hscode
   \let\origendhscode=\endhscode
   \def\endhscode{\def\hscode{\endgroup\def\@currenvir{hscode}\\}\begingroup}
\def\hscode{\endgroup\def\@currenvir{hscode}}}%
   \global\let\hscode=\orighscode
   \global\let\endhscode=\origendhscode}%
\lstdefinelanguage{HaskellMin}%
  {otherkeywords={=>},%
   morekeywords={if,then,else,case,class,data,deriving,hiding,in,infix,infixl,infixr,import,instance,let,module,newtype,of,qualified,type,where,do,as,family,default,forall,foreign,proc,rec},%
   sensitive,
   morecomment=[l]--,%
   morecomment=[n]{\{-}{-\}},%
   morestring=[b]"%
 }[keywords,comments,strings]
\DeclareMathOperator{\Hom}{Hom}
\newcommand{\bB}{\mathbb{B}}
\newcommand{\bC}{\mathbb{C}}
\newcommand{\bD}{\mathbb{D}}
\newcommand{\bN}{\mathbb{N}}
\newcommand{\bR}{\mathbb{R}}
\newcommand{\bZ}{\mathbb{Z}}
\newcommand{\cC}{\mathcal{C}}
\newcommand{\cD}{\mathcal{D}}
\newcommand{\cE}{\mathcal{E}}
\newcommand{\cL}{\mathcal{L}}
\newcommand{\cR}{\mathcal{R}}
\newcommand{\sC}{\mathscr{C}}
\newcommand{\gcat}[1]{\mathsf{#1}}
\newcommand{\gset}[1]{\mathit{#1}}
\renewcommand{\op}{\mathrm{op}}
\newcommand{\FinSet}{\gcat{FinSet}}
\newcommand{\Set}{\gcat{Set}}
\newcommand{\SpanCat}{\gcat{Span}}
\newcommand{\CatCat}{\gcat{Cat}}
\newcommand{\Fam}{\gcat{Fam}}
\newcommand{\Single}{\gset{Single}}
\newcommand{\FAlg}{\gcat{FAlg}}
\newcommand{\MAlg}{\gcat{MAlg}}
\newcommand{\Endo}{\gcat{Endo}}
\newcommand{\Mon}{\gcat{Mon}}
\newcommand{\Carrier}{\gset{Carrier}}
\newcommand{\Free}{\gset{Free}}
\newcommand{\LCC}{\gcat{LCC}}
\newcommand{\DblCat}{\gcat{DblCat}}
\newcommand{\VirtDblCat}{\gcat{VirtDblCat}}
\newcommand{\TwoCat}{\text{2-}\gcat{Cat}}
\newcommand{\ArrowTwoCat}{\gcat{Arrow}\text{2-}\gcat{Cat}}
\newcommand{\into}{\hookrightarrow}
\newcommand{\cmp}{\fatsemi}
\newcommand{\SymbolSet}{\gset{Sym}}
\newcommand{\Dtry}{\gset{Dtry}}
\newcommand{\DtryCat}{\gcat{Dtry}}
\newcommand{\List}{\gset{List}}
\newcommand{\Record}{\gset{Record}}
\newcommand{\NERecord}{\gset{Record}_{\neq \emptyset}}
\newcommand{\Maybe}{\gset{Maybe}}
\newcommand{\filterNothings}{\mathit{filterNothings}}
\newcommand{\pMap}{\mathit{pathMap}}
\newcommand{\PFSubset}{\Subset_{\mathit{pf}}}
\newcommand{\PathFamily}{\mathit{PathFamily}}
\newcommand{\Poly}{\gcat{Poly}}
\newcommand{\PolyMon}{\gcat{PolyMon}}
\newcommand{\colim}{\varinjlim}
\newcommand{\freemonad}{\mathfrak{m}}
\newcommand{\FinFam}{\gcat{FinFam}}
\newcommand{\sym}[1]{\mathtt{#1}}
\newtheoremstyle{break} 
  {}          
  {}          
  {}          
  {}          
  {\bfseries} 
  {.}         
  {\newline}  
  {}          
\theoremstyle{break}
\newenvironment{theorem}
  {\pushQED{\qed}\theoremx}
  {\popQED\endtheoremx}
\newenvironment{proposition}
  {\pushQED{\qed}\propositionx}
  {\popQED\endpropositionx}
\newenvironment{corollary}
  {\pushQED{\qed}\corollaryx}
  {\popQED\endcorollaryx}
\newenvironment{definition}
  {\pushQED{\qed}\definitionx}
  {\popQED\enddefinitionx}
\newenvironment{lemma}
  {\pushQED{\qed}\lemmax}
  {\popQED\endlemmax}
\newenvironment{example}
  {\pushQED{\qed}\examplex}
  {\popQED\endexamplex}
\newenvironment{remark*}
  {\pushQED{\qed}\remarkxx}
  {\popQED\endremarkxx}
\long\def\hide#1{}
\journal{arXiv}
\newcommand{\titel}{%
    Directories:
    \texorpdfstring{\\}{}
    A Convenient and Well-Behaved Formalism for Hierarchical Organization
    \texorpdfstring{\\}{}
    in Categorical Systems Theory
}
\begin{document}

\begin{frontmatter}
	\title{\titel}

  \author[topos]{Owen Lynch\corref{cor1}}
	\ead{owen@topos.institute}
  \cortext[cor1]{Corresponding author}

	\author[fau]{Markus Lohmayer}

	\address[topos]{%
		Topos Institute\\
    Berkeley, CA, USA
	}

	\address[fau]{%
		Institute of Applied Dynamics\\
   	Friedrich-Alexander Universität Erlangen-Nürnberg,
		Erlangen, Germany
	}

	\begin{abstract}
		This paper introduces an inherently strict presentation of
categories with products, coproducts, or symmetric monoidal products
that is inspired by
file systems and directories.
Rather than using nested binary tuples
to combine objects or morphisms,
the presentation uses named tuples.
Specifically, we develop 2-monads whose strict 2-algebras are
product categories, coproduct categories, or symmetric monoidal categories,
in a similar vein to the classical Fam construction,
but where the elements of the indexing set are
period-separated identifiers like $\sym{cart.motor.momentum}$.
Our development of directories is also intended to serve
the secondary purpose of expositing certain aspects of polynomial monads,
and is accompanied by Haskell code
that shows how the mathematical ideas can be implemented.

	\end{abstract}

	\begin{keyword}
    applied category theory \sep{}
    fam construction \sep{}
    directories \sep{}
    polynomial functors \sep{}
    2-monads \sep{}
    2-category theory
	\end{keyword}
\end{frontmatter}

\hide{
\begin{hscode}\SaveRestoreHook
\column{B}{@{}>{\hspre}l<{\hspost}@{}}%
\column{E}{@{}>{\hspre}l<{\hspost}@{}}%
\>[B]{}\mathbf{module}\;\Conid{Introduction}\;\mathbf{where}{}\<[E]%
\ColumnHook
\end{hscode}\resethooks
}

\section{Introduction}

\subsection{Motivation}

In the course of writing about exergetic port-Hamiltonian systems \cite{lohmayer_Exergetic_2025}, the second author names variables by period-separated identifiers, like $\sym{rotor.coil.flux}$. The first author, upon seeing this, thought that it was such a good idea that it should be formalized. Directories are the result of following that formalization to its logical conclusion from the perspective of category theory. We develop this formalization alongside an implementation in Haskell, in the hope that this will make the paper accessible both to programmers and mathematicians.

Is one to understand from the length of this paper that category theory vastly overcomplicates simple things? We hope that this is not the case. Rather, we have taken the opportunity to draw connections between the simple topic (directories) and a wide variety of topics in category theory such as polynomial monads, 2-algebra, the fam construction, etc., in the hope that directories provide a good example to demonstrate their practical application.

\subsection{Outline}

In the first part of the paper we develop a monad, $\Dtry$, on the category of sets. The intuition is that $\Dtry(X)$ is an assignment of some collection of names like $\sym{oscillator.mass.momentum}$ to elements of $X$.

In the second part, we then extend $\Dtry$ to be a 2-monad on the 2-category of categories. This is to provide an alternative, equivalent definition for symmetric monoidal categories and multicategories which allows for a more human-friendly monoidal product, where the factors of the monoidal product are labeled by names like $\sym{oscillator.spring.displacement}$. This can be achieved without sacrificing strictness (in the sense of strict algebras of a 2-monad).

We then include three appendices, which contain the statements and proofs of lemmas used in the main text.

\hide{
\begin{hscode}\SaveRestoreHook
\column{B}{@{}>{\hspre}l<{\hspost}@{}}%
\column{E}{@{}>{\hspre}l<{\hspost}@{}}%
\>[B]{}\mbox{\enskip\{-\# LANGUAGE GeneralizedNewtypeDeriving  \#-\}\enskip}{}\<[E]%
\\
\>[B]{}\mbox{\enskip\{-\# LANGUAGE DeriveFunctor  \#-\}\enskip}{}\<[E]%
\\
\>[B]{}\mbox{\enskip\{-\# LANGUAGE FlexibleInstances  \#-\}\enskip}{}\<[E]%
\\
\>[B]{}\mathbf{module}\;\Conid{\Conid{EPHS}.Directories}\;\mathbf{where}{}\<[E]%
\\[\blanklineskip]%
\>[B]{}\mathbf{import}\;\Conid{\Conid{Control}.Monad}\;(\Varid{join}){}\<[E]%
\\[\blanklineskip]%
\>[B]{}\mathbf{import}\;\Conid{\Conid{Data}.Map}\;(\Conid{Map}){}\<[E]%
\\
\>[B]{}\mathbf{import}\;\Varid{qualified}\;\Conid{\Conid{Data}.Map}\;\Varid{as}\;\Conid{Map}{}\<[E]%
\\
\>[B]{}\mathbf{import}\;\Conid{\Conid{Data}.List}\;(\Varid{sortOn},\Varid{foldl'}){}\<[E]%
\\[\blanklineskip]%
\>[B]{}\mathbf{import}\;\Conid{\Conid{Data}.Text}\;(\Conid{Text}){}\<[E]%
\\
\>[B]{}\mathbf{import}\;\Conid{\Conid{EPHS}.NonEmptyRecord}\;(\Conid{NonEmptyRecord}){}\<[E]%
\\
\>[B]{}\mathbf{import}\;\Varid{qualified}\;\Conid{\Conid{EPHS}.NonEmptyRecord}\;\Varid{as}\;\Conid{NonEmptyRecord}{}\<[E]%
\ColumnHook
\end{hscode}\resethooks
}

\section{Directories}

\subsection{Directories are maps from paths to values}

In short, a \textbf{directory} is an object in which \textbf{complete paths} are associated with \textbf{contents}. For instance, in the following directory the content of the complete path $\sym{oscillator.mass.p}$ is $\sym{0.0}$.
\dirtree{%
.1 $\blacksquare$.
.2 oscillator.
.3 mass.
.4 momentum => 0.0.
.3 spring.
.4 displacement => 1.0.
.2 thermal\_capacity.
.3 entropy => 16.56.
}

The underlying metaphor of directories should be familiar to anyone who has used a file system before. In this section, we will develop a mathematical formalism for directories.

Throughout this section, we will illustrate the mathematics using Haskell. Ideally, this is so that a reader with a background in functional programming but not math, or a reader with a background in math but not functional programming could understand the concepts. And the code is in fact real code, not pseudocode: this document is written with literate Haskell and thus can be used as a library.

From a mathematical standpoint, we build our definition of directories on top of the machinery of \emph{polynomial functors}, so we begin with a brief review of those. Using polynomial structures to describe data structures has a long history, going back at least to \cite{jay_Shapely_1994}. However, our notation and terminology is most influenced by the more recent treatment \cite{niu_Polynomial_2023}.

\begin{definition}
  Suppose that $A$ is a set. Let $y^A \colon \Set \to \Set$ be the functor defined by
  \begin{align*}
    y^A(X) &:= \{ x \colon A \to X \} = X^A \\
    y^A(f \colon X \to Y) &:= (x \colon X^A) \mapsto (x \cmp f \colon Y^A)
  \end{align*}
  where
  \[ A \xrightarrow{x \cmp f} Y = A \xrightarrow{x} X \xrightarrow{f} Y \]
\end{definition}

Some use the notation $y^A = \Hom_\Set(A, -)$ to define $y^A$; the above definition is simply an unfolding of what that means.\footnote{It turns out that the map $A \mapsto y^A$ is also a functor; it is a functor $\Set^\op \to \Set^\Set$ called the Yoneda embedding, which is foundational to many parts of category theory.}

\begin{definition}
  A \textbf{univariate polynomial functor} is a functor $p \colon \Set \to \Set$ such that there exists an indexed family of sets $\{F_i\}_{i \colon p(1)}$ with
  \[ p \cong \sum_{i \colon p(1)} y^{F_i} \]
  We call elements of $p(1)$ \textbf{positions} and elements of $F_i$ for $i \colon p(1)$ \textbf{directions at $i$}. The category $\Poly$ is the full subcategory of the functor category $\Set^\Set$ on the polynomial functors. That is, a morphism $p \to q$ in $\Poly$ is a natural transformation from $p$ to $q$.
\end{definition}

Henceforth, we will drop the ``univariate'' prefix and just call these polynomial functors, in line with the convention of Spivak and Niu. Almost everything we do with polynomial functors would not make sense if we did not have the following lemma.

\begin{lemma}
  If for a functor $p \colon \Set \to \Set$ such an indexed family $\{F_i\}_{i \colon p(1)}$ exists, then that indexed family is unique up to isomorphism. That is, if
  \[ \alpha \colon \sum_{i \colon p(1)} y^{F_i} \cong \sum_{i \colon p(1)} y^{F_i'} \]
  such that
  \[ \alpha_1 \colon \sum_{i \colon p(1)} 1^{F_i} \cong \sum_{i \colon p(1)} 1^{F_i'} \]
  is the identity on $\sum_{i \colon p(1)} 1$, then $F_i \cong F_i'$ for all $i \colon p(1)$.
\end{lemma}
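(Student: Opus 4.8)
The plan is to reduce the statement to the Yoneda lemma together with the universal property of coproducts. Fix a position $i_0 \colon p(1)$; I want to extract from $\alpha$ a map $F'_{i_0} \to F_{i_0}$, run the same construction on $\alpha^{-1}$ to get a map back, and then check that the two are mutually inverse.

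First I would record the structural fact that $\alpha$ \emph{preserves positions}. Unpacking the coproduct, an element of $\left(\sum_i y^{F_i}\right)(X)$ is a pair $(i, x)$ with $i \colon p(1)$ and $x \colon F_i \to X$, and the functorial action of the unique map $!_X \colon X \to 1$ sends $(i,x)$ to $(i, \,!)$, collapsing $x$ to the unique function $F_i \to 1$. Applying naturality of $\alpha$ to $!_X$ and using the hypothesis that $\alpha_1$ is the identity on $\sum_i 1 = p(1)$, I would conclude that the position component of $\alpha_X(i,x)$ is again $i$, so that $\alpha_X(i, x) = (i, \beta_i(x))$ for a family of maps $\beta_i \colon y^{F_i}(X) \to y^{F'_i}(X)$ natural in $X$. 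This is the one place where the hypothesis on $\alpha_1$ is essential: without it, $\alpha$ could permute and reindex the summands, and the conclusion would be false.

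With position preservation in hand, each $\beta_{i_0}$ is a natural transformation $y^{F_{i_0}} \to y^{F'_{i_0}}$, so by the Yoneda lemma it is precomposition with a unique map $g_{i_0} \colon F'_{i_0} \to F_{i_0}$, i.e.\ $\beta_{i_0}(x) = g_{i_0} \cmp x$. Since $\alpha^{-1}$ also has identity component at $1$, the identical argument applied to it yields $h_{i_0} \colon F_{i_0} \to F'_{i_0}$. Finally I would compose: the equation $\alpha^{-1}\circ\alpha = \id$ forces $\beta'_{i_0}\circ\beta_{i_0} = \id$ on $y^{F_{i_0}}(X)$ for every $X$, which by the contravariance of the Yoneda correspondence on exponents reads $g_{i_0}\circ h_{i_0} = \id_{F_{i_0}}$; symmetrically $\alpha\circ\alpha^{-1} = \id$ gives $h_{i_0}\circ g_{i_0} = \id_{F'_{i_0}}$. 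Thus $g_{i_0}$ is an isomorphism and $F_{i_0} \cong F'_{i_0}$.

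The main obstacle is entirely in the first step—verifying via the naturality square for $!_X$ that the position-identity hypothesis forces the $i_0$-summand to land in the $i_0$-summand. Once that is established, the remainder is a mechanical application of Yoneda and functoriality, where the only care needed is the variance: the forward transformations $\beta_{i_0}$ on the functors $y^{F}$ correspond to backward maps on the exponents, so the composites come out in precisely the order required to certify that $g_{i_0}$ and $h_{i_0}$ are inverse.
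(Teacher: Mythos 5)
Your proof is correct, but note that the paper does not actually prove this lemma at all: it explicitly says ``We will not prove this'' and offers only the informal analogy with real polynomials admitting a unique polynomial expression. So there is no official argument to compare against, and your proposal genuinely fills the gap. Your two-step structure is the standard one and it is sound: first, naturality of $\alpha$ against the unique map $!_X \colon X \to 1$, combined with the hypothesis that $\alpha_1$ is the identity on $\sum_{i \colon p(1)} 1$ (each $1^{F_i}$ being a singleton), correctly forces $\alpha_X(i,x) = (i, \beta_i(x))$ --- this is indeed the only place the hypothesis on $\alpha_1$ is used, and you are right that without it the summands could be permuted and the conclusion would fail. Second, each $\beta_i \colon y^{F_i} \Rightarrow y^{F_i'}$ is by Yoneda precomposition with a unique $g_i \colon F_i' \to F_i$, and you handle the variance bookkeeping correctly: from $\alpha^{-1} \circ \alpha = \id$ you get $\beta_i' \circ \beta_i = \id$, whose Yoneda transpose is $g_i \circ h_i = \id_{F_i}$ (since $(\beta_i' \circ \beta_i)(x) = x \circ g_i \circ h_i$ and the correspondence is faithful), and symmetrically $h_i \circ g_i = \id_{F_i'}$. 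One small point worth making explicit if you write this up fully: after position-preservation is established, the naturality of each $\beta_i$ in $X$ must be extracted from the naturality squares of $\alpha$ restricted to the $i$-th summand (using disjointness of coproducts in $\Set$), but this is immediate. In short, where the paper waves at an analogy, your argument substantiates it, and at no extra cost it shows the isomorphisms $F_i \cong F_i'$ can be chosen compatibly with $\alpha$ itself.
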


We will not prove this, but the intuition for this is similar to how if you have a function $\bR \to \bR$ that can be written as a polynomial, it can only be written as a polynomial in one way.

For a polynomial functor $p$, we refer to the sets $F_i$ via the notation $p[i]$. Note that $p[i]$ is only defined up to isomorphism. The notation $p[i]$ emphasizes that this is not the application of $p$ to $i$; note that such an application would not typecheck as $i$ is not a set.

\begin{example}
  Basic polynomial expressions from highschool can be recast as polynomial functors in this framework. For instance, $y + 1$, $y^2 + 2y + 1$, and $3y^4 + 4y^3$ are all polynomial functors, where we interpret $y^n$ as $y^{\{1,\ldots,n\}}$, $1$ as $y^0$, and $m y^n$ as $\sum_{i \colon \{1,\ldots,m\}} y^{\{1,\ldots,n\}}$.
\end{example}

In \cite{jay_Shapely_1994}, polynomial functors are described as ``shapely functors.'' The idea is that a polynomial functor describes a generic data structure that can be split up into a ``shape'' and ``data.'' Given a polynomial $p$, a set $X$, and an element $(i \colon p(1), x \colon X^{p[i]}) \colon p(X)$, the ``shape'' is $i \colon p(1)$, and the ``data'' is $x \colon X^{p[i]}$.

\begin{example} \label{ex:list_polynomial}
  The generic data structure of lists can be described by the polynomial functor
  \[ \List := \sum_{n \colon \bN} y^n \]
  Given an element $l = (n \colon \bN, f \colon X^n) \colon \List(X)$, its shape is $n$ and its data is $f$.
\end{example}

A ``shapely type'' is a shapely functor $p$ where $p[i]$ is finite for all $i \colon p(1)$; $\List$ is a shapely type, and so is $y^2 + 2y + 1$.

\begin{example}
  Let $\Record$ be the polynomial functor defined by
  \[ \Record := \sum_{U \Subset \SymbolSet} y^U \]
  where $\Subset$ means ``finite subset''.

  Records are like lists, but indexed by names rather than numbers. A position of $\Record$ is a finite subset of some fixed set of symbols (which could be the set of ASCII strings, or the set of Unicode strings, or the set of Unicode strings not containing punctuation or spaces, etc.; in general all we assume is that the cardinality of $\SymbolSet$ is at least 2) and a direction at the position $U$ is an element of $U$. We capture this in the following Haskell code.
\begin{hscode}\SaveRestoreHook
\column{B}{@{}>{\hspre}l<{\hspost}@{}}%
\column{E}{@{}>{\hspre}l<{\hspost}@{}}%
\>[B]{}\mathbf{type}\;\Conid{Record}\;\Varid{a}\mathrel{=}\Conid{Map}\;\Conid{Text}\;\Varid{a}{}\<[E]%
\ColumnHook
\end{hscode}\resethooks
  Note that we don't use a sum to define this in Haskell, because Haskell has no dependent sum. But this type is still isomorphic to a hypothetical definition that used dependent sum, and that is what counts to be a polynomial functor.
\end{example}

We could use records to store directories, by using strings like \ensuremath{\text{\ttfamily \char34 a.b\char34}} as keys. However, certain operations (like filtering for paths whose first segment was \ensuremath{\text{\ttfamily \char34 a\char34}}) would involve error-prone string manipulation. An improvement would be something like

\begin{hscode}\SaveRestoreHook
\column{B}{@{}>{\hspre}l<{\hspost}@{}}%
\column{E}{@{}>{\hspre}l<{\hspost}@{}}%
\>[B]{}\mathbf{type}\;\Conid{Path}\mathrel{=}[\mskip1.5mu \Conid{Text}\mskip1.5mu]{}\<[E]%
\\[\blanklineskip]%
\>[B]{}\mathbf{type}\;\Dtry_0\;\Varid{a}\mathrel{=}\Conid{Map}\;\Conid{Path}\;\Varid{a}{}\<[E]%
\ColumnHook
\end{hscode}\resethooks

This has the advantage that we don't need to do string manipulation. However, there is a subtle problem with this definition of directory. It is best to understand this problem from the perspective of how we wish to use directories in systems theory: to index the variables in a system. That is, we want something like
\begin{hscode}\SaveRestoreHook
\column{B}{@{}>{\hspre}l<{\hspost}@{}}%
\column{3}{@{}>{\hspre}c<{\hspost}@{}}%
\column{3E}{@{}l@{}}%
\column{6}{@{}>{\hspre}l<{\hspost}@{}}%
\column{17}{@{}>{\hspre}c<{\hspost}@{}}%
\column{17E}{@{}l@{}}%
\column{21}{@{}>{\hspre}l<{\hspost}@{}}%
\column{E}{@{}>{\hspre}l<{\hspost}@{}}%
\>[B]{}\mathbf{data}\;\Conid{System}\mathrel{=}{}\<[E]%
\\
\>[B]{}\hsindent{3}{}\<[3]%
\>[3]{}\{\mskip1.5mu {}\<[3E]%
\>[6]{}\Varid{variables}{}\<[17]%
\>[17]{}\mathbin{::}{}\<[17E]%
\>[21]{}\Conid{Dtry}\;\Conid{QuantityType}{}\<[E]%
\\
\>[B]{}\hsindent{3}{}\<[3]%
\>[3]{},{}\<[3E]%
\>[6]{}\Varid{equations}{}\<[17]%
\>[17]{}\mathbin{::}{}\<[17E]%
\>[21]{}\mathbin{...}{}\<[E]%
\\
\>[B]{}\hsindent{3}{}\<[3]%
\>[3]{}\mskip1.5mu\}{}\<[3E]%
\ColumnHook
\end{hscode}\resethooks
However, we \emph{also} want to use directories to index the \emph{systems} in a composition specification. Composition might look like
\begin{hscode}\SaveRestoreHook
\column{B}{@{}>{\hspre}l<{\hspost}@{}}%
\column{3}{@{}>{\hspre}c<{\hspost}@{}}%
\column{3E}{@{}l@{}}%
\column{6}{@{}>{\hspre}l<{\hspost}@{}}%
\column{19}{@{}>{\hspre}c<{\hspost}@{}}%
\column{19E}{@{}l@{}}%
\column{23}{@{}>{\hspre}l<{\hspost}@{}}%
\column{E}{@{}>{\hspre}l<{\hspost}@{}}%
\>[B]{}\mathbf{data}\;\Conid{CompositionSpec}\mathrel{=}{}\<[E]%
\\
\>[B]{}\hsindent{3}{}\<[3]%
\>[3]{}\{\mskip1.5mu {}\<[3E]%
\>[6]{}\Varid{systems}{}\<[19]%
\>[19]{}\mathbin{::}{}\<[19E]%
\>[23]{}\Conid{Dtry}\;\Conid{System}{}\<[E]%
\\
\>[B]{}\hsindent{3}{}\<[3]%
\>[3]{},{}\<[3E]%
\>[6]{}\Varid{connections}{}\<[19]%
\>[19]{}\mathbin{::}{}\<[19E]%
\>[23]{}\mathbin{...}{}\<[E]%
\\
\>[B]{}\hsindent{3}{}\<[3]%
\>[3]{}\mskip1.5mu\}{}\<[3E]%
\\[\blanklineskip]%
\>[B]{}\Varid{compose}\mathbin{::}\Conid{CompositionSpec}\to \Conid{System}{}\<[E]%
\ColumnHook
\end{hscode}\resethooks
So in order to produce the directory of variables in the composed system, we need a function
\begin{hscode}\SaveRestoreHook
\column{B}{@{}>{\hspre}l<{\hspost}@{}}%
\column{E}{@{}>{\hspre}l<{\hspost}@{}}%
\>[B]{}\Varid{flatten}\mathbin{::}\Conid{Dtry}\;(\Conid{Dtry}\;\Conid{QuantityType})\to \Conid{Dtry}\;\Conid{QuantityType}{}\<[E]%
\ColumnHook
\end{hscode}\resethooks
The problem with \ensuremath{\Dtry_0} as a proposed definition for \ensuremath{\Conid{Dtry}} is that if system $\sym{a}$ has a variable $\sym{b.c}$ and system $\sym{a.b}$ has a variable $\sym{c}$, then \ensuremath{\Varid{flatten}} will confuse them! We need to systematically disallow this from happening; the following section will explain how to do this.

\subsection{Directories as tries}

We can solve the earlier problem with flattening via the following data structure. If the reader is unfamiliar with Haskell, and prefers the polynomial functor definition, then they should skip ahead to \cref{def:polynomial_monad}.
\begin{hscode}\SaveRestoreHook
\column{B}{@{}>{\hspre}l<{\hspost}@{}}%
\column{15}{@{}>{\hspre}c<{\hspost}@{}}%
\column{15E}{@{}l@{}}%
\column{18}{@{}>{\hspre}l<{\hspost}@{}}%
\column{E}{@{}>{\hspre}l<{\hspost}@{}}%
\>[B]{}\mathbf{data}\;\Dtry_1\;\Varid{a}{}\<[15]%
\>[15]{}\mathrel{=}{}\<[15E]%
\>[18]{}\Conid{Leaf}\;\Varid{a}{}\<[E]%
\\
\>[15]{}\mid {}\<[15E]%
\>[18]{}\Conid{Node}\;(\Conid{Record}\;(\Dtry_1\;\Varid{a})){}\<[E]%
\ColumnHook
\end{hscode}\resethooks

\noindent To give a feel for how we use \ensuremath{\Dtry_1}, we can look up a path inside a \ensuremath{\Dtry_1} via the following function.

\begin{hscode}\SaveRestoreHook
\column{B}{@{}>{\hspre}l<{\hspost}@{}}%
\column{3}{@{}>{\hspre}l<{\hspost}@{}}%
\column{12}{@{}>{\hspre}c<{\hspost}@{}}%
\column{12E}{@{}l@{}}%
\column{13}{@{}>{\hspre}l<{\hspost}@{}}%
\column{16}{@{}>{\hspre}l<{\hspost}@{}}%
\column{21}{@{}>{\hspre}l<{\hspost}@{}}%
\column{37}{@{}>{\hspre}l<{\hspost}@{}}%
\column{E}{@{}>{\hspre}l<{\hspost}@{}}%
\>[B]{}\Varid{lookupPath}\mathbin{::}\Conid{Path}\to \Dtry_1\;\Varid{a}\to \Conid{Maybe}\;(\Dtry_1\;\Varid{a}){}\<[E]%
\\
\>[B]{}\Varid{lookupPath}\;{}\<[13]%
\>[13]{}[\mskip1.5mu \mskip1.5mu]\;{}\<[21]%
\>[21]{}\Varid{d}{}\<[37]%
\>[37]{}\mathrel{=}\Conid{Just}\;\Varid{d}{}\<[E]%
\\
\>[B]{}\Varid{lookupPath}\;{}\<[13]%
\>[13]{}(\Varid{n}\mathbin{:}\Varid{ns})\;{}\<[21]%
\>[21]{}(\Conid{Node}\;\Varid{entries}){}\<[37]%
\>[37]{}\mathrel{=}\mathbf{case}\;\Varid{\Conid{Map}.lookup}\;\Varid{n}\;\Varid{entries}\;\mathbf{of}{}\<[E]%
\\
\>[B]{}\hsindent{3}{}\<[3]%
\>[3]{}\Conid{Just}\;\Varid{d}{}\<[12]%
\>[12]{}\to {}\<[12E]%
\>[16]{}\Varid{lookupPath}\;\Varid{ns}\;\Varid{d}{}\<[E]%
\\
\>[B]{}\hsindent{3}{}\<[3]%
\>[3]{}\Conid{Nothing}{}\<[12]%
\>[12]{}\to {}\<[12E]%
\>[16]{}\Conid{Nothing}{}\<[E]%
\\
\>[B]{}\Varid{lookupPath}\;{}\<[13]%
\>[13]{}\anonymous \;{}\<[21]%
\>[21]{}\anonymous {}\<[37]%
\>[37]{}\mathrel{=}\Conid{Nothing}{}\<[E]%
\ColumnHook
\end{hscode}\resethooks

\noindent Additionally, we can implement the \ensuremath{\Conid{Functor}} typeclass, building off of the \ensuremath{\Conid{Functor}} instance for \ensuremath{\Conid{Map}\;\Conid{Text}}.

\begin{hscode}\SaveRestoreHook
\column{B}{@{}>{\hspre}l<{\hspost}@{}}%
\column{3}{@{}>{\hspre}l<{\hspost}@{}}%
\column{E}{@{}>{\hspre}l<{\hspost}@{}}%
\>[B]{}\mathbf{instance}\;\Conid{Functor}\;\Dtry_1\;\mathbf{where}{}\<[E]%
\\
\>[B]{}\hsindent{3}{}\<[3]%
\>[3]{}\Varid{fmap}\;\Varid{f}\;(\Conid{Leaf}\;\Varid{x})\mathrel{=}\Conid{Leaf}\;(\Varid{f}\;\Varid{x}){}\<[E]%
\\
\>[B]{}\hsindent{3}{}\<[3]%
\>[3]{}\Varid{fmap}\;\Varid{f}\;(\Conid{Node}\;\Varid{r})\mathrel{=}\Conid{Node}\;(\Varid{fmap}\;(\Varid{fmap}\;\Varid{f})\;\Varid{r}){}\<[E]%
\ColumnHook
\end{hscode}\resethooks

The \ensuremath{\Varid{flatten}} function that we desire is one half of a monad structure on \ensuremath{\Dtry_1}. This paper will not add itself to the ranks of monad tutorials for Haskell, but we will give a brief reference for how the notation for monads in Haskell and monads in category theory lines up.

\begin{center}
  \begin{tabular}{ l | l | l }
    Monoid operation & Haskell & Category theory \\
    \hline
    Unit & \ensuremath{\Varid{return},\Varid{pure}\mathbin{::}\Varid{a}\to \Varid{m}\;\Varid{a}} & $\eta_X \colon X \to M(X)$ \\
    Multiplication & \ensuremath{\Varid{join},\Varid{flatten}\mathbin{::}\Varid{m}\;(\Varid{m}\;\Varid{a})\to \Varid{m}\;\Varid{a}} & $\mu_X \colon M(M(X)) \to M(X)$ \\
    N/A & \ensuremath{(\bind ),\Varid{bind}\mathbin{::}\Varid{m}\;\Varid{a}\to (\Varid{a}\to \Varid{m}\;\Varid{b})\to \Varid{m}\;\Varid{b}} & N/A
  \end{tabular}
\end{center}

In order to more closely align this paper with category theory notation, we will use a non-standard typeclass for monads in Haskell.\footnote{Using \ensuremath{\mathit{Monad}_\mu^\eta} has the side benefit that we need not mention applicatives beyond this footnote.}

\begin{hscode}\SaveRestoreHook
\column{B}{@{}>{\hspre}l<{\hspost}@{}}%
\column{3}{@{}>{\hspre}l<{\hspost}@{}}%
\column{8}{@{}>{\hspre}l<{\hspost}@{}}%
\column{20}{@{}>{\hspre}c<{\hspost}@{}}%
\column{20E}{@{}l@{}}%
\column{24}{@{}>{\hspre}l<{\hspost}@{}}%
\column{E}{@{}>{\hspre}l<{\hspost}@{}}%
\>[B]{}\mathbf{class}\;(\Conid{Functor}\;\Varid{m})\Rightarrow \mathit{Monad}_\mu^\eta\;\Varid{m}\;\mathbf{where}{}\<[E]%
\\
\>[B]{}\hsindent{3}{}\<[3]%
\>[3]{}\eta{}\<[8]%
\>[8]{}\mathbin{::}\Varid{a}{}\<[20]%
\>[20]{}\to {}\<[20E]%
\>[24]{}\Varid{m}\;\Varid{a}{}\<[E]%
\\
\>[B]{}\hsindent{3}{}\<[3]%
\>[3]{}\mu{}\<[8]%
\>[8]{}\mathbin{::}\Varid{m}\;(\Varid{m}\;\Varid{a}){}\<[20]%
\>[20]{}\to {}\<[20E]%
\>[24]{}\Varid{m}\;\Varid{a}{}\<[E]%
\ColumnHook
\end{hscode}\resethooks

\noindent We can then define an instance of \ensuremath{\mathit{Monad}_\mu^\eta} for \ensuremath{\Dtry_1}.

\begin{hscode}\SaveRestoreHook
\column{B}{@{}>{\hspre}l<{\hspost}@{}}%
\column{3}{@{}>{\hspre}l<{\hspost}@{}}%
\column{16}{@{}>{\hspre}c<{\hspost}@{}}%
\column{16E}{@{}l@{}}%
\column{19}{@{}>{\hspre}l<{\hspost}@{}}%
\column{E}{@{}>{\hspre}l<{\hspost}@{}}%
\>[B]{}\mathbf{instance}\;\mathit{Monad}_\mu^\eta\;\Dtry_1\;\mathbf{where}{}\<[E]%
\\
\>[B]{}\hsindent{3}{}\<[3]%
\>[3]{}\eta\mathrel{=}\Conid{Leaf}{}\<[E]%
\\
\>[B]{}\hsindent{3}{}\<[3]%
\>[3]{}\mu\;(\Conid{Leaf}\;\Varid{x}){}\<[16]%
\>[16]{}\mathrel{=}{}\<[16E]%
\>[19]{}\Varid{x}{}\<[E]%
\\
\>[B]{}\hsindent{3}{}\<[3]%
\>[3]{}\mu\;(\Conid{Node}\;\Varid{r}){}\<[16]%
\>[16]{}\mathrel{=}{}\<[16E]%
\>[19]{}\Conid{Node}\;(\Varid{fmap}\;\mu\;\Varid{r}){}\<[E]%
\ColumnHook
\end{hscode}\resethooks
If we try to directly translate the definition of \ensuremath{\Dtry_1} and its monad instance into math, it would be strictly less helpful than the Haskell version. However, by noticing that \ensuremath{\Dtry_1} comes from a \emph{free monad} construction, we both make less work for ourselves when we want to prove things and we discover an elegant mathematical story.

\begin{definition} \label{def:polynomial_monad}
  A \textbf{polynomial monad} is a monad $(M, \eta, \mu)$ on $\Set$ whose underlying functor is polynomial. The category $\PolyMon$ is the full subcategory of the category of monads on $\Set$ corresponding to the polynomial monads.
\end{definition}

\begin{example}
  The polynomial functor $\List$ defined in \cref{ex:list_polynomial} has a monad structure where $\eta_X \colon X \to \List(X)$ is defined by
  \[\eta_X(x) = [x]\]
  and $\mu_X \colon \List(\List(X)) \to \List(X)$ is defined by
  \[ \mu_X([[x_{1,1},\ldots,x_{1,k_1}],\ldots,[x_{n,1},\ldots,x_{n,k_n}]]) = [x_{1,1},\ldots,x_{n,k_n}] \qedhere \]
\end{example}

\begin{proposition}
  Let $U$ be the forgetful functor $\PolyMon \to \Poly$ which sends a monad $(m, \eta, \mu)$ to $m$. Then $U$ has a left adjoint which we denote by $f \mapsto \freemonad_f$. We call $\freemonad_f$ the \textbf{free monad} on $p$.
\end{proposition}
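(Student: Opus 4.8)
The plan is to construct $\freemonad_p$ explicitly as the monad of well-founded $p$-trees — exactly the mathematical content of the $\Dtry_1$ datatype, with $\Record$ replaced by an arbitrary polynomial functor $p$ — and then to verify the adjunction $\freemonad_{(-)} \dashv U$ by induction over those trees. First I would build the underlying polynomial functor. Define the position set $T_p$ as the initial algebra of the endofunctor $Z \mapsto 1 + p(Z)$ on $\Set$, so that $T_p \cong 1 + \sum_{i \colon p(1)} T_p^{\,p[i]}$; an element of $T_p$ is either a leaf or a node labelled by a position $i \colon p(1)$ together with a $p[i]$-indexed family of subtrees. This initial algebra exists and is a genuine set because $p$, being polynomial, is finitary, so the chain $0 \to 1 + p(0) \to 1 + p(1 + p(0)) \to \cdots$ converges. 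Alongside $T_p$ I would define the leaf family $L \colon T_p \to \Set$ by the recursion $L(\mathrm{leaf}) = 1$ and $L(\mathrm{node}(i, (t_d)_{d \colon p[i]})) = \sum_{d \colon p[i]} L(t_d)$, and set $\freemonad_p := \sum_{t \colon T_p} y^{L(t)}$. This is manifestly an object of $\Poly$, with positions the $p$-trees and directions the leaves, so that $\freemonad_p(X)$ is the set of $p$-trees whose leaves are decorated by elements of $X$.

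Next I would equip $\freemonad_p$ with its monad structure. The unit $\eta \colon y \to \freemonad_p$ sends the unique position of $y$ to the single leaf, so that $\eta_X(x)$ is the one-leaf tree decorated by $x$. The multiplication $\mu \colon \freemonad_p \tri \freemonad_p \to \freemonad_p$ is grafting: given a $p$-tree whose leaves are themselves decorated $p$-trees, one replaces each leaf by the tree sitting on it. Both are morphisms of polynomial functors, and the three monad laws are each proved by a routine induction over the well-founded structure of $T_p$.

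The substance of the proposition is the universal property, which I would establish by exhibiting, for every polynomial monad $(m, \eta^m, \mu^m)$, a natural bijection $\Hom_{\PolyMon}(\freemonad_p, m) \cong \Hom_{\Poly}(p, U m)$. The adjunction unit is the polynomial morphism $p \to \freemonad_p$ embedding each position $i \colon p(1)$ as a depth-one node all of whose children are leaves (on directions this is the identity $p[i] \to p[i]$). Given $\phi \colon p \to U m$, I observe that $\eta^m_X \colon X \to m(X)$ together with the composite $p(m(X)) \xrightarrow{\phi_{m(X)}} m(m(X)) \xrightarrow{\mu^m_X} m(X)$ make $m(X)$ into an algebra for the functor $Z \mapsto X + p(Z)$, whose initial algebra is $\freemonad_p(X)$; initiality then yields a unique algebra map $\hat\phi_X \colon \freemonad_p(X) \to m(X)$, natural in $X$.

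The hard part will be checking that $\hat\phi$ is a morphism of monads — that it commutes with the units and, more delicately, with the two multiplications — and that it is the unique such morphism restricting to $\phi$ along the unit. Both follow by well-founded induction on the tree argument, where the grafting definition of $\mu$ and the associativity of $\mu^m$ are precisely what force the multiplication square to commute at a node; this bookkeeping is the only genuinely laborious step. As a cross-check, abstract existence of the left adjoint also follows from the finitariness of polynomial endofunctors together with local finite presentability of $\Set$ via Kelly's transfinite construction; the extra content here is exactly that the resulting free monad remains inside $\Poly$, which the explicit tree description makes evident.
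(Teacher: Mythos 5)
Your proposal is correct in outline and in fact proves strictly more than the paper does: the paper's ``proof'' is explicitly a sketch that only \emph{constructs} $\freemonad_p$ --- in Haskell as the recursive type $\mathit{Free}\;f\;a$, and in $\Poly$ as the colimit of the chain $p_{(0)} = y$, $p_{(n+1)} = y + p \circ p_{(n)}$ --- and defers the verification of the adjunction entirely to the cited literature (Kelly's transfinite construction and its descendants). Your well-founded-tree polynomial $\sum_{t \colon T_p} y^{L(t)}$ is precisely the closed form of that colimit, so the underlying object agrees with the paper's; what you add is an actual proof of the universal property, via the observation that $\freemonad_p(X)$ is the initial algebra of $Z \mapsto X + p(Z)$ and that a morphism $\phi \colon p \to U m$ equips $m(X)$ with such an algebra structure through $\eta^m$ and $\mu^m_X \circ \phi_{m(X)}$, with uniqueness and the monad-morphism laws handled by well-founded induction. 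This is the standard ``algebraically-free'' route, and it is the same machinery the paper itself invokes later (\cref{lemma:algebraically-free} in its appendix on distributive laws), so your argument is a legitimate completion of what the paper leaves to references rather than a divergence from it.

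One slip needs repair: the claim that ``$p$, being polynomial, is finitary'' is false in general --- $y^A$ for infinite $A$ is polynomial but does not preserve filtered colimits, so the $\omega$-chain $0 \to 1 + p(0) \to 1 + p(1 + p(0)) \to \cdots$ need not converge in $\omega$ steps. The paper is careful on exactly this point: it assumes $p[i]$ finite for each $i \colon p(1)$ (which covers its intended case $p = \Record$) and relegates the infinite case to a footnote citing \cite{libkind_Pattern_2024}. Your construction survives because every polynomial functor is $\kappa$-accessible for any regular cardinal $\kappa$ exceeding all $\lvert p[i] \rvert$, so the initial algebras of $Z \mapsto X + p(Z)$ still exist by transfinite iteration, and nothing downstream --- the grafting multiplication, the inductive verifications, the initiality argument for $\hat\phi$ --- depends on the chain stopping at $\omega$. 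But as written you should either adopt the paper's standing finiteness hypothesis on the $p[i]$ or replace the $\omega$-chain justification with the transfinite one.
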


\begin{proof}
  We will not give a full proof here (the reader can refer to \cite{kelly_Unified_1980,nlabauthors_transfinite_2024,libkind_Pattern_2024} for a complete treatment), but we will give a construction of $\freemonad_p$ in Haskell and directly in $\Poly$. The Haskell is slightly easier to parse at a first glance, because we are allowed to express recursive types directly in Haskell instead of having to resort to an explicit construction of a fixed point.
\begin{hscode}\SaveRestoreHook
\column{B}{@{}>{\hspre}l<{\hspost}@{}}%
\column{3}{@{}>{\hspre}l<{\hspost}@{}}%
\column{16}{@{}>{\hspre}c<{\hspost}@{}}%
\column{16E}{@{}l@{}}%
\column{17}{@{}>{\hspre}c<{\hspost}@{}}%
\column{17E}{@{}l@{}}%
\column{19}{@{}>{\hspre}l<{\hspost}@{}}%
\column{20}{@{}>{\hspre}l<{\hspost}@{}}%
\column{21}{@{}>{\hspre}c<{\hspost}@{}}%
\column{21E}{@{}l@{}}%
\column{24}{@{}>{\hspre}l<{\hspost}@{}}%
\column{25}{@{}>{\hspre}l<{\hspost}@{}}%
\column{E}{@{}>{\hspre}l<{\hspost}@{}}%
\>[B]{}\mathbf{data}\;\Conid{Free}\;\Varid{f}\;\Varid{a}{}\<[16]%
\>[16]{}\mathrel{=}{}\<[16E]%
\>[19]{}\Conid{Pure}\;{}\<[25]%
\>[25]{}\Varid{a}{}\<[E]%
\\
\>[16]{}\mid {}\<[16E]%
\>[19]{}\Conid{Join}\;{}\<[25]%
\>[25]{}(\Varid{f}\;(\Conid{Free}\;\Varid{f}\;\Varid{a})){}\<[E]%
\\[\blanklineskip]%
\>[B]{}\mathbf{instance}\;(\Conid{Functor}\;\Varid{f})\Rightarrow \Conid{Functor}\;(\Conid{Free}\;\Varid{f})\;\mathbf{where}{}\<[E]%
\\
\>[B]{}\hsindent{3}{}\<[3]%
\>[3]{}\Varid{fmap}\;\Varid{g}\;(\Conid{Pure}\;\Varid{x}){}\<[21]%
\>[21]{}\mathrel{=}{}\<[21E]%
\>[24]{}\Conid{Pure}\;(\Varid{g}\;\Varid{x}){}\<[E]%
\\
\>[B]{}\hsindent{3}{}\<[3]%
\>[3]{}\Varid{fmap}\;\Varid{g}\;(\Conid{Join}\;\Varid{xs}){}\<[21]%
\>[21]{}\mathrel{=}{}\<[21E]%
\>[24]{}\Conid{Join}\;(\Varid{fmap}\;(\Varid{fmap}\;\Varid{g})\;\Varid{xs}){}\<[E]%
\\[\blanklineskip]%
\>[B]{}\mathbf{instance}\;(\Conid{Functor}\;\Varid{f})\Rightarrow \mathit{Monad}_\mu^\eta\;(\Conid{Free}\;\Varid{f})\;\mathbf{where}{}\<[E]%
\\
\>[B]{}\hsindent{3}{}\<[3]%
\>[3]{}\eta\mathrel{=}\Conid{Pure}{}\<[E]%
\\
\>[B]{}\hsindent{3}{}\<[3]%
\>[3]{}\mu\;(\Conid{Pure}\;\Varid{x}){}\<[17]%
\>[17]{}\mathrel{=}{}\<[17E]%
\>[20]{}\Varid{x}{}\<[E]%
\\
\>[B]{}\hsindent{3}{}\<[3]%
\>[3]{}\mu\;(\Conid{Join}\;\Varid{xs}){}\<[17]%
\>[17]{}\mathrel{=}{}\<[17E]%
\>[20]{}\Conid{Join}\;(\Varid{fmap}\;\mu\;\Varid{xs}){}\<[E]%
\ColumnHook
\end{hscode}\resethooks
  Note that this is almost the same as the definition of \ensuremath{\Dtry_1} above, only \ensuremath{\Conid{Record}} has been replaced with \ensuremath{\Varid{f}}. Specifically, we have exchanged \ensuremath{\Conid{Leaf}} for \ensuremath{\Conid{Pure}}, \ensuremath{\Conid{Node}} for \ensuremath{\Conid{Join}}, and \ensuremath{\Conid{Record}} for \ensuremath{\Varid{f}}.

  The equivalent mathematical construction works in the following way, essentially building the above type via a formula for a fixed point which uses a colimit. Assume that $p$ is a polynomial functor with $p[i]$ finite for each $i \colon p(1)$.\footnote{The construction for $p[i]$ possibly infinite is slightly more complicated, but can also be done: see \cite{libkind_Pattern_2024}.} Given a polynomial functor $p$, consider the following functor $p_{(-)} \colon (\bN, \leq) \to \Poly$. On objects, define $p_{(n)}$ by induction as

  \begin{align*}
    p_{(0)} &= y \\
    p_{(n+1)} &= y + p \circ p_{(n)}
  \end{align*}

  For arrows, define $p_{(0 \to 1)} \colon p_{(0)} \to p_{(1)}$ via the left inclusion $y \to y + p \circ y$. Then, assuming that we have defined $p_{(n \to n+1)}$, define
  \[p_{(n+1 \to n+2)} \colon p_{(n+1)} \to (p_{(n+2)} = y + p \circ p_{(n+1)})\]
  by using the functoriality of $+$ and $\circ$, that is
  \[p_{(n+1 \to n+2)} = 1_y + 1_p \circ p_{(n \to n+1)} \]
  Finally, define $\freemonad_p$ as the colimit of $p_{(-)}$:
  \[ \freemonad_p := \colim_{\bN} p_{(-)} \]
\end{proof}

We can now see that $\Dtry_1 \cong \freemonad_\Record$. This is a perfectly fine mathematical object, and it rules out the earlier problem with \ensuremath{\Dtry_0}; namely if \text{\ttfamily a\char46{}b\char46{}c} points to a value, then \text{\ttfamily a\char46{}b} cannot point to a value. However, in the process of solving one problem with \ensuremath{\Dtry_0}, we introduced a new problem. \ensuremath{\Dtry_0\;\Varid{a}} had the desirable property that an element of \ensuremath{\Dtry_0\;\Varid{a}} is fully characterized as a map from complete paths to values. This is no longer the case for \ensuremath{\Dtry_1\;\Varid{a}}, because we can have tree structures like
\dirtree{%
  .1 $\blacksquare$.
  .2 x.
  .2 y.
  .2 z => 2.
}
\noindent in which \ensuremath{\Varid{x}} and \ensuremath{\Varid{y}} are ``empty directories.'' This complicates defining operations like

\begin{hscode}\SaveRestoreHook
\column{B}{@{}>{\hspre}l<{\hspost}@{}}%
\column{E}{@{}>{\hspre}l<{\hspost}@{}}%
\>[B]{}\Varid{filter}\mathbin{::}(\Varid{a}\to \Conid{Bool})\to \Conid{Dtry}\;\Varid{a}\to \Conid{Dtry}\;\Varid{a}{}\<[E]%
\ColumnHook
\end{hscode}\resethooks

\noindent because it is ambiguous what to do when all contents of a directory are filtered out; do we leave it empty or delete it?

\subsection{Disallowing non-empty subdirectories}

The solution is to disallow empty directories, \emph{except at the top level}. We will see precisely what this means shortly. First we define a \ensuremath{\Conid{NonEmptyRecord}} type as follows.

\hide{
\begin{hscode}\SaveRestoreHook
\column{B}{@{}>{\hspre}l<{\hspost}@{}}%
\column{E}{@{}>{\hspre}l<{\hspost}@{}}%
\>[B]{}\mbox{\enskip\{-\# LANGUAGE GeneralizedNewtypeDeriving  \#-\}\enskip}{}\<[E]%
\\
\>[B]{}\mbox{\enskip\{-\# LANGUAGE DeriveFunctor  \#-\}\enskip}{}\<[E]%
\\
\>[B]{}\mbox{\enskip\{-\# LANGUAGE FlexibleInstances  \#-\}\enskip}{}\<[E]%
\ColumnHook
\end{hscode}\resethooks
}

\begin{hscode}\SaveRestoreHook
\column{B}{@{}>{\hspre}l<{\hspost}@{}}%
\column{E}{@{}>{\hspre}l<{\hspost}@{}}%
\>[B]{}\mathbf{module}\;\mathit{Record}_{\neq \emptyset}\;\mathbf{where}{}\<[E]%
\ColumnHook
\end{hscode}\resethooks
\hide{
\begin{hscode}\SaveRestoreHook
\column{B}{@{}>{\hspre}l<{\hspost}@{}}%
\column{E}{@{}>{\hspre}l<{\hspost}@{}}%
\>[B]{}\mathbf{import}\;\Conid{\Conid{Data}.Map}\;(\Conid{Map}){}\<[E]%
\\
\>[B]{}\mathbf{import}\;\Varid{qualified}\;\Conid{\Conid{Data}.Map}\;\Varid{as}\;\Conid{Map}{}\<[E]%
\\
\>[B]{}\mathbf{import}\;\Conid{\Conid{Data}.Text}\;(\Conid{Text}){}\<[E]%
\\[\blanklineskip]%
\>[B]{}\mathbf{type}\;\Conid{Record}\mathrel{=}\Conid{Map}\;\Conid{Text}{}\<[E]%
\ColumnHook
\end{hscode}\resethooks
}
\begin{hscode}\SaveRestoreHook
\column{B}{@{}>{\hspre}l<{\hspost}@{}}%
\column{3}{@{}>{\hspre}l<{\hspost}@{}}%
\column{E}{@{}>{\hspre}l<{\hspost}@{}}%
\>[B]{}\mathbf{newtype}\;\mathit{Record}_{\neq \emptyset}\;\Varid{a}\mathrel{=}\mathit{MkRecord}_{\neq \emptyset}\;(\Conid{Map}\;\Conid{Text}\;\Varid{a}){}\<[E]%
\\
\>[B]{}\hsindent{3}{}\<[3]%
\>[3]{}\mathbf{deriving}\;\Conid{Functor}{}\<[E]%
\\[\blanklineskip]%
\>[B]{}\Varid{coerce}\mathbin{::}\Conid{Record}\;\Varid{a}\to \Conid{Maybe}\;(\mathit{Record}_{\neq \emptyset}\;\Varid{a}){}\<[E]%
\\
\>[B]{}\Varid{coerce}\;\Varid{r}\mathrel{=}\mathbf{if}\;\Varid{\Conid{Map}.null}\;\Varid{r}\;\mathbf{then}\;\Conid{Nothing}\;\mathbf{else}\;\Conid{Just}\;(\mathit{MkRecord}_{\neq \emptyset}\;\Varid{r}){}\<[E]%
\\[\blanklineskip]%
\>[B]{}\Varid{toRecord}\mathbin{::}\mathit{Record}_{\neq \emptyset}\;\Varid{a}\to \Conid{Record}\;\Varid{a}{}\<[E]%
\\
\>[B]{}\Varid{toRecord}\;(\mathit{MkRecord}_{\neq \emptyset}\;\Varid{r})\mathrel{=}\Varid{r}{}\<[E]%
\ColumnHook
\end{hscode}\resethooks
\hide{
\begin{hscode}\SaveRestoreHook
\column{B}{@{}>{\hspre}l<{\hspost}@{}}%
\column{22}{@{}>{\hspre}l<{\hspost}@{}}%
\column{E}{@{}>{\hspre}l<{\hspost}@{}}%
\>[B]{}\Varid{singleton}\mathbin{::}\Conid{Text}\to \Varid{a}\to \mathit{Record}_{\neq \emptyset}\;\Varid{a}{}\<[E]%
\\
\>[B]{}\Varid{singleton}\;\Varid{k}\;\Varid{x}\mathrel{=}\mathit{MkRecord}_{\neq \emptyset}\;(\Varid{\Conid{Map}.singleton}\;\Varid{k}\;\Varid{x}){}\<[E]%
\\[\blanklineskip]%
\>[B]{}\Varid{insert}\mathbin{::}\Conid{Text}\to \Varid{a}{}\<[22]%
\>[22]{}\to \mathit{Record}_{\neq \emptyset}\;\Varid{a}\to \mathit{Record}_{\neq \emptyset}\;\Varid{a}{}\<[E]%
\\
\>[B]{}\Varid{insert}\;\Varid{k}\;\Varid{x}\;(\mathit{MkRecord}_{\neq \emptyset}\;\Varid{m})\mathrel{=}\mathit{MkRecord}_{\neq \emptyset}\;(\Varid{\Conid{Map}.insert}\;\Varid{k}\;\Varid{x}\;\Varid{m}){}\<[E]%
\ColumnHook
\end{hscode}\resethooks
}

So long as we only construct a \ensuremath{\mathit{Record}_{\neq \emptyset}} via \ensuremath{\Varid{coerce}}, the invariant of being non-empty will remain true.

\noindent Mathematically, this corresponds to

\[ \NERecord := \sum_{\emptyset \neq U \Subset \SymbolSet} y^U \]

\noindent We use this \ensuremath{\mathit{Record}_{\neq \emptyset}} type to construct the definition for \ensuremath{\Conid{Dtry}}.

\begin{definition}
Let $\Dtry$ be the polynomial functor defined by

\[ \Dtry := \Maybe \circ \freemonad_{\NERecord} \]

\noindent where $\Maybe = y + 1$.
\end{definition}

\noindent This corresponds to the code

\begin{hscode}\SaveRestoreHook
\column{B}{@{}>{\hspre}l<{\hspost}@{}}%
\column{3}{@{}>{\hspre}l<{\hspost}@{}}%
\column{E}{@{}>{\hspre}l<{\hspost}@{}}%
\>[B]{}\mathbf{type}\;\Dtry_{\neq \emptyset}\;\Varid{a}\mathrel{=}\Conid{Free}\;\mathit{Record}_{\neq \emptyset}\;\Varid{a}{}\<[E]%
\\[\blanklineskip]%
\>[B]{}\mathbf{newtype}\;\Conid{Dtry}\;\Varid{a}\mathrel{=}\Conid{MkDtry}\;(\Conid{Maybe}\;(\Dtry_{\neq \emptyset}\;\Varid{a})){}\<[E]%
\\
\>[B]{}\hsindent{3}{}\<[3]%
\>[3]{}\mathbf{deriving}\;\Conid{Functor}{}\<[E]%
\ColumnHook
\end{hscode}\resethooks

\begin{proposition}
$\Dtry$ is a polynomial monad.
\end{proposition}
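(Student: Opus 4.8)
The plan is to separate the two claims bundled into ``$\Dtry$ is a polynomial monad'': that its underlying functor is polynomial, and that it carries a compatible monad structure. The first is immediate. The functor $\Maybe = y + 1$ is polynomial by inspection, $\freemonad_{\NERecord}$ is the underlying functor of a polynomial monad by the free-monad proposition, and $\Poly$ is closed under composition (this is the substitution operation on polynomials already used to build $\freemonad_p$ via $p \circ p_{(n)}$). Hence $\Dtry = \Maybe \circ \freemonad_{\NERecord}$ is polynomial, and everything left to do concerns the monad structure on the composite.

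Since a composite of two monads is not automatically a monad, I would obtain the structure from a \emph{distributive law} $\lambda \colon \freemonad_{\NERecord} \circ \Maybe \Rightarrow \Maybe \circ \freemonad_{\NERecord}$ of $\freemonad_{\NERecord}$ over $\Maybe$. By Beck's theorem this equips $\Maybe \circ \freemonad_{\NERecord}$ with a monad whose unit is $\eta^{\Maybe}\eta^{\freemonad}$, i.e.\ the singleton leaf $x \mapsto \mathrm{Just}(\mathrm{Pure}\,x)$, and whose multiplication is the Beck composite $\mu^{\Maybe}\,\mu^{\freemonad}\cdot \Maybe\,\lambda\,\freemonad$. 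Concretely $\lambda$ is \emph{pruning}: given a non-empty trie whose leaves lie in $\Maybe(X)$, delete every $\mathrm{Nothing}$ leaf, recursively delete any record that thereby becomes empty, and return $\mathrm{Nothing}$ if the whole trie disappears and $\mathrm{Just}$ of the survivor otherwise. Unwinding the Beck formula shows $\mu^{\Dtry}$ is then exactly the empty-subdirectory-pruning \texttt{flatten} we want.

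To build $\lambda$ and its axioms with the least pain, I would invoke the universal property of the free monad: distributive laws of $\freemonad_{\NERecord}$ over a monad $S$ correspond to liftings of $S$ to $\mathbf{EM}(\freemonad_{\NERecord}) \cong \NERecord\text{-}\mathrm{Alg}$. So it suffices to lift $\Maybe$: for each $\NERecord$-algebra $a \colon \NERecord(A) \to A$, give $\Maybe(A) = 1 + A$ the $\NERecord$-algebra structure that discards the $\bot$-entries of a record and applies $a$ to those that remain (returning $\bot$ if none do). Because $\NERecord$ is a bare endofunctor, its algebras carry no laws; the only things to verify are that this assignment is natural in algebra maps and that $\eta^{\Maybe}$ and $\mu^{\Maybe}$ are $\NERecord$-algebra morphisms for it. A pleasant dividend of routing through the free monad is that the substitution coherence between $\lambda$ and $\mu^{\freemonad}$ (the ``prune-then-flatten versus flatten-then-prune'' axiom) is discharged automatically by the equivalence $\mathbf{EM}(\freemonad_{\NERecord}) \cong \NERecord\text{-}\mathrm{Alg}$.

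The main obstacle is therefore the compatibility of pruning with $\mu^{\Maybe}$, i.e.\ checking that $\mu^{\Maybe}\colon 1 + (1 + A) \to 1 + A$ is an $\NERecord$-algebra morphism: pruning a record of doubly-optional values and then collapsing the two $\bot$ layers must agree with collapsing first and then pruning, including the case analysis where a record survives the inner prune but is emptied by the outer collapse. This is finitary and I expect it to go through by a direct case distinction, with the non-emptiness invariant of $\NERecord$ doing the real work of keeping pruning well-defined at each node; naturality and the $\eta^{\Maybe}$ axiom are then routine since $\Maybe$ is the free pointed monad. Finally I would record that this abstractly-defined structure reproduces the explicit \texttt{flatten} on the Haskell \texttt{Dtry}, so the two presentations agree.
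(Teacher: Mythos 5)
Your proposal is correct and takes essentially the same route as the paper: the monad structure on $\Dtry = \Maybe \circ \freemonad_{\NERecord}$ is obtained from a distributive law $\freemonad_{\NERecord} \circ \Maybe \to \Maybe \circ \freemonad_{\NERecord}$, which the paper likewise constructs (in \cref{thm:half-distributive,thm:nerecord-maybe-distributive-law}) by lifting $\Maybe$ through the isomorphism $\MAlg(\freemonad_{\NERecord}) \cong \FAlg(\NERecord)$, with the $\eta^{\Maybe}$/$\mu^{\Maybe}$ compatibilities for the pruning map ($\filterNothings$) checked by exactly the finitary case analysis you describe. Your explicit observation that the underlying functor is polynomial because $\Poly$ is closed under composition is left implicit in the paper but is a harmless (indeed welcome) addition.
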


\begin{proof}
Although both $\Maybe$ and $\freemonad_{\NERecord}$ are monads, their composite is not automatically a monad. One way in order to put a monad structure on the composite is to use a \emph{distributive law}, which is a natural transformation of the following type \cite{beck_Distributive_1969}:

\[ \freemonad_{\NERecord} \circ \Maybe \to \Maybe \circ \freemonad_{\NERecord} \]

This takes a non-empty directory of \ensuremath{\Conid{Maybe}}s, and filters out all of the \ensuremath{\Conid{Nothing}} values. If there's anything left, it returns a non-empty directory of those; otherwise it returns \ensuremath{\Conid{Nothing}}. We can write this in Haskell:

\begin{hscode}\SaveRestoreHook
\column{B}{@{}>{\hspre}l<{\hspost}@{}}%
\column{20}{@{}>{\hspre}c<{\hspost}@{}}%
\column{20E}{@{}l@{}}%
\column{23}{@{}>{\hspre}l<{\hspost}@{}}%
\column{E}{@{}>{\hspre}l<{\hspost}@{}}%
\>[B]{}\Varid{filterNothings}\mathbin{::}\mathit{Record}_{\neq \emptyset}\;(\Conid{Maybe}\;\Varid{a})\to \Conid{Maybe}\;(\mathit{Record}_{\neq \emptyset}\;\Varid{a}){}\<[E]%
\\
\>[B]{}\Varid{filterNothings}\;\Varid{ds}\mathrel{=}\Varid{\mathit{Record}_{\neq \emptyset}.coerce}\;(\Varid{\Conid{Map}.mapMaybe}\;\Varid{id}\;(\Varid{\mathit{Record}_{\neq \emptyset}.toRecord}\;\Varid{ds})){}\<[E]%
\\[\blanklineskip]%
\>[B]{}\Varid{distrib}\mathbin{::}\Dtry_{\neq \emptyset}\;(\Conid{Maybe}\;\Varid{a})\to \Conid{Maybe}\;(\Dtry_{\neq \emptyset}\;\Varid{a}){}\<[E]%
\\
\>[B]{}\Varid{distrib}\;(\Conid{Pure}\;\Varid{mx}){}\<[20]%
\>[20]{}\mathrel{=}{}\<[20E]%
\>[23]{}\Varid{fmap}\;\Conid{Pure}\;\Varid{mx}{}\<[E]%
\\
\>[B]{}\Varid{distrib}\;(\Conid{Join}\;\Varid{ds}){}\<[20]%
\>[20]{}\mathrel{=}{}\<[20E]%
\>[23]{}\Varid{fmap}\;\Conid{Join}\;(\Varid{filterNothings}\;(\Varid{fmap}\;\Varid{distrib}\;\Varid{ds})){}\<[E]%
\ColumnHook
\end{hscode}\resethooks

\noindent where

\begin{hscode}\SaveRestoreHook
\column{B}{@{}>{\hspre}l<{\hspost}@{}}%
\column{E}{@{}>{\hspre}l<{\hspost}@{}}%
\>[B]{}\Varid{\Conid{Map}.mapMaybe}\mathbin{::}\Conid{Map}\;\Varid{a}\to (\Varid{a}\to \Conid{Maybe}\;\Varid{b})\to \Conid{Map}\;\Varid{b}{}\<[E]%
\ColumnHook
\end{hscode}\resethooks

\noindent We can then use \ensuremath{\Varid{distrib}} to build the Monad instance.

\hide{
\begin{hscode}\SaveRestoreHook
\column{B}{@{}>{\hspre}l<{\hspost}@{}}%
\column{3}{@{}>{\hspre}l<{\hspost}@{}}%
\column{E}{@{}>{\hspre}l<{\hspost}@{}}%
\>[B]{}\mathbf{instance}\;\mathit{Monad}_\mu^\eta\;\Conid{Maybe}\;\mathbf{where}{}\<[E]%
\\
\>[B]{}\hsindent{3}{}\<[3]%
\>[3]{}\eta\mathrel{=}\Varid{return}{}\<[E]%
\\
\>[B]{}\hsindent{3}{}\<[3]%
\>[3]{}\mu\mathrel{=}\Varid{join}{}\<[E]%
\ColumnHook
\end{hscode}\resethooks
}

\begin{hscode}\SaveRestoreHook
\column{B}{@{}>{\hspre}l<{\hspost}@{}}%
\column{3}{@{}>{\hspre}l<{\hspost}@{}}%
\column{5}{@{}>{\hspre}l<{\hspost}@{}}%
\column{7}{@{}>{\hspre}l<{\hspost}@{}}%
\column{E}{@{}>{\hspre}l<{\hspost}@{}}%
\>[B]{}\mathbf{instance}\;\mathit{Monad}_\mu^\eta\;\Conid{Dtry}\;\mathbf{where}{}\<[E]%
\\
\>[B]{}\hsindent{3}{}\<[3]%
\>[3]{}\eta\mathrel{=}\Conid{MkDtry}\mathbin{\circ}\Conid{Just}\mathbin{\circ}\Conid{Pure}{}\<[E]%
\\
\>[B]{}\hsindent{3}{}\<[3]%
\>[3]{}\mu\;\Varid{d}\mathrel{=}\Conid{MkDtry}\;(\Varid{fmap}\;\mu\;(\mu\;\Varid{swapped})){}\<[E]%
\\
\>[3]{}\hsindent{2}{}\<[5]%
\>[5]{}\mathbf{where}{}\<[E]%
\\
\>[5]{}\hsindent{2}{}\<[7]%
\>[7]{}(\Conid{MkDtry}\;\Varid{unwrapped})\mathrel{=}\Varid{fmap}\;(\lambda (\Conid{MkDtry}\;\Varid{d'})\to \Varid{d'})\;\Varid{d}{}\<[E]%
\\
\>[5]{}\hsindent{2}{}\<[7]%
\>[7]{}\Varid{swapped}\mathrel{=}\Varid{fmap}\;\Varid{distrib}\;\Varid{unwrapped}{}\<[E]%
\ColumnHook
\end{hscode}\resethooks

\noindent To understand the above code, consider the types of \ensuremath{\Varid{unwrapped}} and \ensuremath{\Varid{swapped}}:

\begin{hscode}\SaveRestoreHook
\column{B}{@{}>{\hspre}l<{\hspost}@{}}%
\column{12}{@{}>{\hspre}c<{\hspost}@{}}%
\column{12E}{@{}l@{}}%
\column{16}{@{}>{\hspre}l<{\hspost}@{}}%
\column{E}{@{}>{\hspre}l<{\hspost}@{}}%
\>[B]{}\Varid{unwrapped}{}\<[12]%
\>[12]{}\mathbin{::}{}\<[12E]%
\>[16]{}\Conid{Maybe}\;(\Dtry_{\neq \emptyset}\;(\Conid{Maybe}\;(\Dtry_{\neq \emptyset}\;\Varid{a}))){}\<[E]%
\\
\>[B]{}\Varid{swapped}{}\<[12]%
\>[12]{}\mathbin{::}{}\<[12E]%
\>[16]{}\Conid{Maybe}\;(\Conid{Maybe}\;(\Dtry_{\neq \emptyset}\;(\Dtry_{\neq \emptyset}\;\Varid{a}))){}\<[E]%
\ColumnHook
\end{hscode}\resethooks

\noindent The real work in defining $\mu$ is getting from \ensuremath{\Varid{unwrapped}} to \ensuremath{\Varid{swapped}} via the distributive law: once we have got to \ensuremath{\Varid{swapped}} we can use the monad structures of \ensuremath{\Conid{Maybe}} and \ensuremath{\Dtry_{\neq \emptyset}} to get the rest of the way. We show that \ensuremath{\Varid{distrib}} is actually a distributive law in \cref{thm:nerecord-maybe-distributive-law}.
\end{proof}

\subsection{Using $\Dtry$}

In this section, we go over some useful lemmas and associated functions for \ensuremath{\Conid{Dtry}} which make the task of reasoning with \ensuremath{\Conid{Dtry}} nicer. First of all, it is convenient to be able to try and construct a \ensuremath{\Conid{Dtry}\;\Varid{a}} from a  list of pairs of paths and values, i.e. something like \ensuremath{[\mskip1.5mu ([\mskip1.5mu \Conid{Text}\mskip1.5mu],\Varid{a})\mskip1.5mu]}, because when using \ensuremath{\Conid{Dtry}} in a domain specific language you often might want to parse things like

\begin{tabbing}\ttfamily
~~~a\char46{}x~\char61{}~2\\
\ttfamily ~~~a\char46{}y~\char61{}~1\\
\ttfamily ~~~b~~~\char61{}~3
\end{tabbing}

We do this mathematically by showing that there is an injection \ensuremath{\Varid{pathMap}\mathbin{::}\Conid{Dtry}\;\Varid{a}\to \Dtry_0\;\Varid{a}}, where we recall that \ensuremath{\Dtry_0\;\Varid{a}\mathrel{=}\Conid{Map}\;[\mskip1.5mu \Conid{Text}\mskip1.5mu]\;\Varid{a}}. We then implement the partial inverse \ensuremath{\Varid{fromPathMap}\mathbin{::}\Dtry_0\;\Varid{a}\to \Conid{Maybe}\;(\Conid{Dtry}\;\Varid{a})}.

We can define $\pMap$ with the following Haskell code.

\begin{hscode}\SaveRestoreHook
\column{B}{@{}>{\hspre}l<{\hspost}@{}}%
\column{3}{@{}>{\hspre}l<{\hspost}@{}}%
\column{5}{@{}>{\hspre}l<{\hspost}@{}}%
\column{7}{@{}>{\hspre}l<{\hspost}@{}}%
\column{25}{@{}>{\hspre}c<{\hspost}@{}}%
\column{25E}{@{}l@{}}%
\column{28}{@{}>{\hspre}l<{\hspost}@{}}%
\column{30}{@{}>{\hspre}c<{\hspost}@{}}%
\column{30E}{@{}l@{}}%
\column{33}{@{}>{\hspre}l<{\hspost}@{}}%
\column{E}{@{}>{\hspre}l<{\hspost}@{}}%
\>[B]{}\Varid{pathMap}\mathbin{::}\Conid{Dtry}\;\Varid{a}\to \Dtry_0\;\Varid{a}{}\<[E]%
\\
\>[B]{}\Varid{pathMap}\mathrel{=}\Varid{\Conid{Map}.fromList}\mathbin{\circ}\Varid{paths}{}\<[E]%
\\
\>[B]{}\hsindent{3}{}\<[3]%
\>[3]{}\mathbf{where}{}\<[E]%
\\
\>[3]{}\hsindent{2}{}\<[5]%
\>[5]{}\Varid{paths}\mathbin{::}\Conid{Dtry}\;\Varid{a}\to [\mskip1.5mu ([\mskip1.5mu \Conid{Text}\mskip1.5mu],\Varid{a})\mskip1.5mu]{}\<[E]%
\\
\>[3]{}\hsindent{2}{}\<[5]%
\>[5]{}\Varid{paths}\;(\Conid{MkDtry}\;\Conid{Nothing}){}\<[30]%
\>[30]{}\mathrel{=}{}\<[30E]%
\>[33]{}[\mskip1.5mu \mskip1.5mu]{}\<[E]%
\\
\>[3]{}\hsindent{2}{}\<[5]%
\>[5]{}\Varid{paths}\;(\Conid{MkDtry}\;(\Conid{Just}\;\Varid{d})){}\<[30]%
\>[30]{}\mathrel{=}{}\<[30E]%
\>[33]{}\Varid{paths'}\;\Varid{d}{}\<[E]%
\\
\>[3]{}\hsindent{2}{}\<[5]%
\>[5]{}\Varid{paths'}\mathbin{::}\Dtry_{\neq \emptyset}\;\Varid{a}\to [\mskip1.5mu ([\mskip1.5mu \Conid{Text}\mskip1.5mu],\Varid{a})\mskip1.5mu]{}\<[E]%
\\
\>[3]{}\hsindent{2}{}\<[5]%
\>[5]{}\Varid{paths'}\;(\Conid{Pure}\;\Varid{x}){}\<[25]%
\>[25]{}\mathrel{=}{}\<[25E]%
\>[28]{}[\mskip1.5mu ([\mskip1.5mu \mskip1.5mu],\Varid{x})\mskip1.5mu]{}\<[E]%
\\
\>[3]{}\hsindent{2}{}\<[5]%
\>[5]{}\Varid{paths'}\;((\Conid{Join}\;\Varid{ds})){}\<[25]%
\>[25]{}\mathrel{=}{}\<[25E]%
\>[28]{}\mathbf{do}{}\<[E]%
\\
\>[5]{}\hsindent{2}{}\<[7]%
\>[7]{}(\Varid{k},\Varid{d})\leftarrow \Varid{\Conid{Map}.toList}\mathbin{\$}\Varid{\mathit{Record}_{\neq \emptyset}.toRecord}\;\Varid{ds}{}\<[E]%
\\
\>[5]{}\hsindent{2}{}\<[7]%
\>[7]{}(\Varid{p},\Varid{x})\leftarrow \Varid{paths'}\;\Varid{d}{}\<[E]%
\\
\>[5]{}\hsindent{2}{}\<[7]%
\>[7]{}\Varid{return}\;(\Varid{k}\mathbin{:}\Varid{p},\Varid{x}){}\<[E]%
\ColumnHook
\end{hscode}\resethooks

\noindent Mathematically speaking, $\pMap$ is a natural transformation of type
\[ \pMap \colon \Dtry \into \sum_{U \Subset \SymbolSet^{\ast}} y^{U}\]
where $\SymbolSet^{\ast}$ is the set of lists of symbols. It can be defined by noticing that for $d \colon \Dtry(1)$, the set of directions $\Dtry[d]$ is naturally a subset of $\SymbolSet^{\ast}$. This is because a direction in $\Dtry[d]$ is a path through the tree $d$, which can be given as a list of symbols. Moreover, $\pMap$ is an injection because in a \ensuremath{\Conid{Dtry}}, the collection of internal nodes is determined purely by the paths to the leaves (which is not the case for \ensuremath{\Dtry_1}).

We will now construct a partial inverse to $\pMap$. This relies on first characterizing the image of $\pMap$.

\begin{definition}
  Given two paths $p, p' \colon \SymbolSet^{\ast}$, where $p = [p_{1},\ldots,p_{n}]$ and $p' = [p_{1}',\ldots,p_{n'}']$, $p$ is a \textbf{prefix} of $p'$ if $n \leq n'$ and $p_{i} = p'_{i}$ for $i \colon \{1,\ldots,n\}$. A subset $U \Subset \SymbolSet^{\ast}$ is \textbf{prefix-free} if for all pairs $p \neq p' \colon U$, $p$ is not a prefix of $p'$.
\end{definition}

\begin{proposition}
  The image of $\pMap$ consists of
  \[ \sum_{\substack{U \Subset \SymbolSet^{\ast} \\ \text{$U$ is prefix-free}}} y^{U} \]
\end{proposition}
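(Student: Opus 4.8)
The plan is to reduce the statement to a combinatorial characterization of the \emph{positions} appearing in the image, and then prove the two resulting inclusions by induction. Since $\pMap$ sends a position $d \colon \Dtry(1)$ to the position $U := \Dtry[d]$, regarded as a finite subset of $\SymbolSet^{\ast}$ via the inclusion of paths, and since its direction component is the identity bijection $U = \Dtry[d]$, the transformation $\pMap$ is cartesian. Combined with the injectivity of $\pMap$ already established (the internal nodes of a \ensuremath{\Conid{Dtry}} are determined by the paths to its leaves), this means the image is exactly the sub-polynomial-functor $\sum_{U} y^{U}$ where $U$ ranges over the positions in the image of the position map. It therefore suffices to show that $\{\, \Dtry[d] : d \colon \Dtry(1) \,\}$ equals the set of prefix-free finite subsets of $\SymbolSet^{\ast}$.

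First I would prove that every realized $U = \Dtry[d]$ is prefix-free. Recalling $\Dtry = \Maybe \circ \freemonad_{\NERecord}$, a position $d$ is either $\mathtt{Nothing}$, giving $U = \emptyset$ (vacuously prefix-free), or $\mathtt{Just}\ t$ for $t$ a position of $\freemonad_{\NERecord}$, i.e. a finite tree whose internal nodes carry a nonempty record of children and whose directions are the root-to-leaf paths. I would argue by structural induction on $t$: for $t = \mathtt{Pure}$ one gets $U = \{[\,]\}$; for $t = \mathtt{Join}(r)$ the path-set is $\bigcup_{k} \{\, k \mathbin{:} p \mid p \in \freemonad_{\NERecord}[r(k)] \,\}$, a union over distinct first symbols $k$ of the subtree path-sets, each prefix-free by the inductive hypothesis. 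Distinct first symbols preclude prefix relations across branches, and the inductive hypothesis handles them within a branch; the essential point is simply that a leaf is terminal, so no complete path extends another.

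Conversely I would show every prefix-free finite $U \Subset \SymbolSet^{\ast}$ is realized, by strong induction on the total length $\sum_{p \in U} |p|$. If $U = \emptyset$ take $d = \mathtt{Nothing}$; if $[\,] \in U$ then prefix-freeness forces $U = \{[\,]\}$ and we take $d = \mathtt{Just}\ \mathtt{Pure}$; otherwise every path is nonempty and I would partition $U$ by first symbol into $U_{k} = \{\, \mathrm{tail}(p) : p \in U,\ \mathrm{head}(p) = k \,\}$. Each $U_{k}$ is nonempty, prefix-free, and of strictly smaller total length (we delete one symbol from each path, and $|U| \geq 1$), so the inductive hypothesis yields subtree positions $t_{k}$ with $\freemonad_{\NERecord}[t_{k}] = U_{k}$; assembling the nonempty record $k \mapsto t_{k}$ into $\mathtt{Join}$ and wrapping in $\mathtt{Just}$ produces a $d$ with $\Dtry[d] = U$, as a short check of the two inclusions confirms. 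This construction is precisely the partial inverse $\mathit{fromPathMap}$ restricted to prefix-free inputs.

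The main obstacle I anticipate is not any single inclusion but the bookkeeping that makes the reduction to positions rigorous: one must confirm that $\pMap$ really is cartesian, so that the image is $\sum_{U} y^{U}$ over exactly the realized positions $U$, and that the recursive ``group by first symbol'' operation matches the $\freemonad_{\NERecord}$ tree recursion step for step, so that the reconstructed tree's direction set is $U$ on the nose rather than merely up to the isomorphism $U \cong \Dtry[d]$. Once the correct induction measure (total path length) is fixed, the combinatorics of the prefix decomposition are routine.
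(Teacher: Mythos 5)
Your proposal is correct, but it does substantially more than the paper's own proof, and it is worth seeing exactly where the two diverge. The paper disposes of this proposition with a three-line contradiction argument establishing only the forward inclusion: if $p, p' \colon \Dtry[d]$ with $p$ a prefix of $p'$, then since $p'$ points to a leaf of the tree $d$, the path $p$ must point to an \emph{internal} node, so $p$ cannot itself be a direction of $d$. The converse inclusion --- that every prefix-free finite $U \Subset \SymbolSet^{\ast}$ is realized as some $\Dtry[d]$ --- is not proved there at all; it is absorbed into the \emph{next} proposition, where the functions $\mathit{singleton}$, $\mathit{insert}$, and $\mathit{fromPathMap}$ are built in Haskell and realizability falls out of that construction (you correctly recognized this: your strong induction on total path length, partitioning $U$ by first symbol and recursing, is precisely a mathematical rendering of $\mathit{fromPathMap}$). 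Your structural induction for prefix-freeness, with distinct record keys separating branches and the inductive hypothesis handling each branch, proves the same inclusion as the paper but replaces the informal leaf/internal-node language with the $\mathtt{Pure}$/$\mathtt{Join}$ recursion of $\freemonad_{\NERecord}$ --- slightly longer, but no appeal to unformalized ``tree'' vocabulary. Your explicit reduction to positions via cartesianness of $\pMap$ (the direction components are bijections, so the image subfunctor is $\sum_{U} y^{U}$ over exactly the realized positions) is a step the paper silently elides, and making it explicit is a genuine gain in rigor, since ``the image consists of'' a displayed polynomial is otherwise not even a well-formed claim about a natural transformation. One small bookkeeping caution in your surjectivity argument: state the inductive hypothesis for \emph{nonempty} prefix-free sets and positions of $\freemonad_{\NERecord}$, handling $U = \emptyset$ via $\mathtt{Nothing}$ only at the top level, as you implicitly do when you write $\freemonad_{\NERecord}[t_{k}] = U_{k}$; otherwise the $\Maybe$ wrapping would not thread correctly through the recursion.
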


\begin{proof}
  Let $d \colon \Dtry(1)$ be a directory. Then assume for a contradiction that we have $p,p' \colon \Dtry[d]$ with $p$ a prefix of $p'$. As $p'$ points to a leaf node in the tree for $d$, $p$ must point to an internal node in $d$. But then we can't have $p \colon \Dtry[d]$, and we are done.
\end{proof}

\begin{proposition}
  There is an inverse to
  \[ \pMap \colon \Dtry \to \sum_{\substack{U \Subset \SymbolSet^{\ast} \\ \text{$U$ is prefix-free}}} y^{U}\]
\end{proposition}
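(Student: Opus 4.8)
The plan is to exhibit the inverse directly as a morphism of polynomial functors, exploiting the fact that a map $p \to q$ in $\Poly$ consists of a forward map on positions together with a backward map on directions, and that such a map is an isomorphism precisely when both components are bijections. Recalling the construction of $\pMap$, its action on positions sends a directory shape $d \colon \Dtry(1)$ to the prefix-free set $\Dtry[d] \Subset \SymbolSet^{\ast}$ of paths to its leaves, and its action on directions is the identity (a path is carried to itself). By the previous proposition the codomain is exactly the prefix-free subsets, so it suffices to show that the position map $d \mapsto \Dtry[d]$ is a bijection; the inverse natural transformation will then consist of the reconstruction on positions together with the identity on directions.

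Injectivity of the position map is the observation already recorded when $\pMap$ was introduced: in a $\Dtry$ the internal nodes are forced by the set of leaf-paths, so two shapes with the same path set coincide. The real work is surjectivity, i.e. building a section $\psi$ that reconstructs a directory shape from an arbitrary prefix-free $U \Subset \SymbolSet^{\ast}$. I would define $\psi$ by well-founded recursion on the sum of the lengths of the paths in $U$, with three cases organized around the two layers of $\Dtry = \Maybe \circ \freemonad_{\NERecord}$. If $U = \emptyset$, set $\psi(U)$ to be the empty directory (the element coming from the $1$ summand of $\Maybe = y + 1$). If $U = \{[\,]\}$, set $\psi(U)$ to be the single-leaf shape (the $\mathrm{Pure}$ position). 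Otherwise prefix-freeness forces $[\,] \notin U$, since the empty path is a prefix of everything and $[\,] \in U$ would collapse $U$ to $\{[\,]\}$; hence every path in $U$ is nonempty, and grouping by first symbol $s$ yields the nonempty prefix-free sets $U_s := \{\, p \mid s \mathbin{:} p \in U \,\}$ of tails, so I set $\psi(U) := \mathrm{Node}$ of the record $s \mapsto \psi(U_s)$.

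Two facts must then be checked. First, the recursion is well-founded and $\psi$ genuinely lands in $\Dtry(1)$: each $U_s$ has strictly smaller total path-length and is nonempty, so the records built are honest $\NERecord$-positions and the no-empty-subdirectory invariant is preserved. Second, $\Dtry[\psi(U)] = U$ as subsets of $\SymbolSet^{\ast}$, which I would prove by the same induction, unwinding the directions of $\Dtry$ at $\psi(U)$ to reproduce $U$ case by case. This identity says $\pMap \circ \psi = \mathrm{id}$ on positions; together with injectivity it makes the position map a bijection with inverse $\psi$, and since both direction components are identities under the identification $\Dtry[\psi(U)] = U$, the pair $(\psi, \mathrm{id})$ is a two-sided inverse to $\pMap$ in $\Poly$.

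I expect the main obstacle to be the bookkeeping in the inductive verification that $\Dtry[\psi(U)] = U$: one must track how a path through the reconstructed tree splits as first-symbol-plus-tail and match this against the grouping used to define $\psi$, while confirming that the edge cases $U = \emptyset$ and $U = \{[\,]\}$ align with the $\Maybe$ and leaf layers respectively. Naturality, by contrast, is automatic once the construction is packaged as a $\Poly$ morphism, since its direction components are identities.
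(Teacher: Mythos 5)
Your proposal is correct, and it takes a genuinely different route from the paper. The paper constructs the inverse operationally, as Haskell code: it builds up \emph{prefix}, \emph{singleton}, and \emph{insert}, then defines \emph{fromPathMap} by folding \emph{insert} over the list of path--value pairs, and sketches correctness by induction on successive insertions (each new path not a prefix of earlier ones preserves the round-trip invariants), finishing by invoking injectivity of $\pMap$. You instead define the reconstruction $\psi$ by structural recursion on the prefix-free set itself, grouping paths by first symbol, so that the case analysis ($U = \emptyset$, $U = \{[\,]\}$, otherwise) mirrors the layering $\Dtry = \Maybe \circ \freemonad_{\NERecord}$ exactly; you then verify $\Dtry[\psi(U)] = U$ by induction on total path length and likewise conclude via injectivity of $\pMap$ (the one step both arguments share, and which the paper has already justified). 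Each approach buys something: the paper's insertion-based construction is an executable algorithm that naturally extends to the \emph{partial} inverse $\Dtry_0 \to \Maybe(\Dtry)$ needed for parsing arbitrary, possibly invalid input, but its correctness sketch must track how \emph{insert} interacts with $\pMap$ across an arbitrary insertion order; your $\psi$ is total on precisely the claimed codomain, its recursion matches the tree structure so the verification is a short structural induction, and you make explicit the $\Poly$-level framing the paper leaves implicit --- a morphism of polynomials is an isomorphism iff it is bijective on positions with bijective direction maps, with naturality then automatic. Two small points to record explicitly in a polished write-up: each $U_s$ is prefix-free (if $p$ were a prefix of $p'$ in $U_s$, then $s$-prepended they would violate prefix-freeness of $U$), which you need for the recursive call to land in the stated codomain; and the record $s \mapsto \psi(U_s)$ is finite and nonempty because $U$ is finite, nonempty, and contains only nonempty paths in the third case --- you assert both, and each is a one-line check.
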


\begin{proof}
  We construct this inverse in Haskell via an accumulation of utility methods. Each utility method has a ``primed'' version, which is the version which operates on \ensuremath{\Dtry_{\neq \emptyset}} (which is the free monad on \ensuremath{\mathit{Record}_{\neq \emptyset}}), and an ``unprimed'' version which operates on \ensuremath{\Conid{Dtry}}.

  First, we have \ensuremath{\Varid{prefix}}, which takes in a directory and prefixes all of the paths in that directory by a single name.
\begin{hscode}\SaveRestoreHook
\column{B}{@{}>{\hspre}l<{\hspost}@{}}%
\column{9}{@{}>{\hspre}l<{\hspost}@{}}%
\column{12}{@{}>{\hspre}l<{\hspost}@{}}%
\column{25}{@{}>{\hspre}c<{\hspost}@{}}%
\column{25E}{@{}l@{}}%
\column{28}{@{}>{\hspre}l<{\hspost}@{}}%
\column{E}{@{}>{\hspre}l<{\hspost}@{}}%
\>[B]{}\Varid{prefix'}\mathbin{::}\Conid{Text}\to \Dtry_{\neq \emptyset}\;\Varid{a}\to \Dtry_{\neq \emptyset}\;\Varid{a}{}\<[E]%
\\
\>[B]{}\Varid{prefix'}\;\Varid{k}\;\Varid{d}\mathrel{=}\Conid{Join}\;(\Varid{\mathit{Record}_{\neq \emptyset}.singleton}\;\Varid{k}\;\Varid{d}){}\<[E]%
\\[\blanklineskip]%
\>[B]{}\Varid{prefix}\mathbin{::}\Conid{Text}\to \Conid{Dtry}\;\Varid{a}\to \Conid{Dtry}\;\Varid{a}{}\<[E]%
\\
\>[B]{}\Varid{prefix}\;{}\<[9]%
\>[9]{}\Varid{k}\;{}\<[12]%
\>[12]{}(\Conid{MkDtry}\;\Varid{m}){}\<[25]%
\>[25]{}\mathrel{=}{}\<[25E]%
\>[28]{}\Conid{MkDtry}\;(\Varid{fmap}\;(\Varid{prefix'}\;\Varid{k})\;\Varid{d}){}\<[E]%
\ColumnHook
\end{hscode}\resethooks

  Then we have \ensuremath{\Varid{singleton}}, which produces a directory with a single path in it leading to a certain key.
\begin{hscode}\SaveRestoreHook
\column{B}{@{}>{\hspre}l<{\hspost}@{}}%
\column{13}{@{}>{\hspre}l<{\hspost}@{}}%
\column{21}{@{}>{\hspre}l<{\hspost}@{}}%
\column{24}{@{}>{\hspre}l<{\hspost}@{}}%
\column{E}{@{}>{\hspre}l<{\hspost}@{}}%
\>[B]{}\Varid{singleton'}{}\<[13]%
\>[13]{}\mathbin{::}[\mskip1.5mu \Conid{Text}\mskip1.5mu]\to \Varid{a}\to \Dtry_{\neq \emptyset}\;\Varid{a}{}\<[E]%
\\
\>[B]{}\Varid{singleton'}\;{}\<[13]%
\>[13]{}[\mskip1.5mu \mskip1.5mu]\;{}\<[21]%
\>[21]{}\Varid{x}{}\<[24]%
\>[24]{}\mathrel{=}\eta\;\Varid{x}{}\<[E]%
\\
\>[B]{}\Varid{singleton'}\;{}\<[13]%
\>[13]{}(\Varid{k}\mathbin{:}\Varid{ks})\;{}\<[21]%
\>[21]{}\Varid{x}{}\<[24]%
\>[24]{}\mathrel{=}\Varid{prefix'}\;\Varid{k}\;(\Varid{singleton'}\;\Varid{ks}\;\Varid{x}){}\<[E]%
\\[\blanklineskip]%
\>[B]{}\Varid{singleton}\mathbin{::}[\mskip1.5mu \Conid{Text}\mskip1.5mu]\to \Varid{a}\to \Conid{Dtry}\;\Varid{a}{}\<[E]%
\\
\>[B]{}\Varid{singleton}\;\Varid{p}\;\Varid{x}\mathrel{=}\Conid{MkDtry}\;(\Conid{Just}\;(\Varid{singleton'}\;\Varid{p}\;\Varid{x})){}\<[E]%
\ColumnHook
\end{hscode}\resethooks

  Finally, we have \ensuremath{\Varid{insert}}, which takes a path, a value, and a directory and attempts to insert the value into the directory at the given path. It fails if the given path is a prefix of any pre-existing path in the directory, which includes the case that the path already exists in the directory.
\begin{hscode}\SaveRestoreHook
\column{B}{@{}>{\hspre}l<{\hspost}@{}}%
\column{3}{@{}>{\hspre}l<{\hspost}@{}}%
\column{5}{@{}>{\hspre}l<{\hspost}@{}}%
\column{10}{@{}>{\hspre}l<{\hspost}@{}}%
\column{12}{@{}>{\hspre}l<{\hspost}@{}}%
\column{13}{@{}>{\hspre}l<{\hspost}@{}}%
\column{18}{@{}>{\hspre}l<{\hspost}@{}}%
\column{21}{@{}>{\hspre}l<{\hspost}@{}}%
\column{31}{@{}>{\hspre}l<{\hspost}@{}}%
\column{32}{@{}>{\hspre}c<{\hspost}@{}}%
\column{32E}{@{}l@{}}%
\column{35}{@{}>{\hspre}l<{\hspost}@{}}%
\column{E}{@{}>{\hspre}l<{\hspost}@{}}%
\>[B]{}\Varid{insert'}\mathbin{::}[\mskip1.5mu \Conid{Text}\mskip1.5mu]\to \Varid{a}\to \Dtry_{\neq \emptyset}\;\Varid{a}\to \Conid{Maybe}\;(\Dtry_{\neq \emptyset}\;\Varid{a}){}\<[E]%
\\
\>[B]{}\Varid{insert'}\;{}\<[10]%
\>[10]{}\anonymous \;{}\<[18]%
\>[18]{}\anonymous \;{}\<[21]%
\>[21]{}(\Conid{Pure}\;\anonymous ){}\<[31]%
\>[31]{}\mathrel{=}\Conid{Nothing}{}\<[E]%
\\
\>[B]{}\Varid{insert'}\;{}\<[10]%
\>[10]{}[\mskip1.5mu \mskip1.5mu]\;{}\<[18]%
\>[18]{}\anonymous \;{}\<[21]%
\>[21]{}(\Conid{Join}\;\Varid{r}){}\<[31]%
\>[31]{}\mathrel{=}\Conid{Nothing}{}\<[E]%
\\
\>[B]{}\Varid{insert'}\;{}\<[10]%
\>[10]{}(\Varid{k}\mathbin{:}\Varid{ks})\;{}\<[18]%
\>[18]{}\Varid{x}\;{}\<[21]%
\>[21]{}(\Conid{Join}\;\Varid{r}){}\<[31]%
\>[31]{}\mathrel{=}\mathbf{case}\;\Varid{\Conid{Map}.lookup}\;\Varid{k}\;(\Varid{\mathit{Record}_{\neq \emptyset}.toRecord}\;\Varid{r})\;\mathbf{of}{}\<[E]%
\\
\>[B]{}\hsindent{3}{}\<[3]%
\>[3]{}\Conid{Just}\;\Varid{d}{}\<[12]%
\>[12]{}\to \mathbf{do}{}\<[E]%
\\
\>[3]{}\hsindent{2}{}\<[5]%
\>[5]{}\Varid{d'}\leftarrow \Varid{insert'}\;\Varid{ks}\;\Varid{x}\;\Varid{d}{}\<[E]%
\\
\>[3]{}\hsindent{2}{}\<[5]%
\>[5]{}\Varid{return}\mathbin{\$}\Conid{Join}\mathbin{\$}\Varid{\mathit{Record}_{\neq \emptyset}.insert}\;\Varid{k}\;\Varid{d'}\;\Varid{r}{}\<[E]%
\\
\>[B]{}\hsindent{3}{}\<[3]%
\>[3]{}\Conid{Nothing}{}\<[12]%
\>[12]{}\to \Conid{Just}\mathbin{\$}\Conid{Join}\mathbin{\$}\Varid{\mathit{Record}_{\neq \emptyset}.insert}\;\Varid{k}\;(\Varid{singleton'}\;\Varid{ks}\;\Varid{x})\;\Varid{r}{}\<[E]%
\\[\blanklineskip]%
\>[B]{}\Varid{insert}\mathbin{::}[\mskip1.5mu \Conid{Text}\mskip1.5mu]\to \Varid{a}\to \Conid{Dtry}\;\Varid{a}\to \Conid{Maybe}\;(\Conid{Dtry}\;\Varid{a}){}\<[E]%
\\
\>[B]{}\Varid{insert}\;\Varid{p}\;\Varid{x}\;{}\<[13]%
\>[13]{}(\Conid{MkDtry}\;\Conid{Nothing}){}\<[32]%
\>[32]{}\mathrel{=}{}\<[32E]%
\>[35]{}\Conid{Just}\;(\Varid{singleton}\;\Varid{p}\;\Varid{x}){}\<[E]%
\\
\>[B]{}\Varid{insert}\;\Varid{p}\;\Varid{x}\;{}\<[13]%
\>[13]{}(\Conid{MkDtry}\;(\Conid{Just}\;\Varid{d})){}\<[32]%
\>[32]{}\mathrel{=}{}\<[32E]%
\>[35]{}\mathbf{do}{}\<[E]%
\\
\>[B]{}\hsindent{3}{}\<[3]%
\>[3]{}\Varid{d'}\leftarrow \Varid{insert'}\;\Varid{p}\;\Varid{x}\;\Varid{d}{}\<[E]%
\\
\>[B]{}\hsindent{3}{}\<[3]%
\>[3]{}\Varid{return}\mathbin{\$}\Conid{MkDtry}\mathbin{\$}\Conid{Just}\;\Varid{d'}{}\<[E]%
\ColumnHook
\end{hscode}\resethooks

  With \ensuremath{\Varid{insert}} defined, \ensuremath{\Varid{fromPathMap}} is quite easy to define. We start with the empty directory, and progressively insert paths into it until there are no more paths left. If at any point, we try to insert a path that is a prefix of a pre-existing path, we fail.
\begin{hscode}\SaveRestoreHook
\column{B}{@{}>{\hspre}l<{\hspost}@{}}%
\column{3}{@{}>{\hspre}l<{\hspost}@{}}%
\column{14}{@{}>{\hspre}l<{\hspost}@{}}%
\column{29}{@{}>{\hspre}l<{\hspost}@{}}%
\column{32}{@{}>{\hspre}c<{\hspost}@{}}%
\column{32E}{@{}l@{}}%
\column{35}{@{}>{\hspre}l<{\hspost}@{}}%
\column{E}{@{}>{\hspre}l<{\hspost}@{}}%
\>[B]{}\Varid{insertPaths}\mathbin{::}[\mskip1.5mu ([\mskip1.5mu \Conid{Text}\mskip1.5mu],\Varid{a})\mskip1.5mu]\to \Conid{Dtry}\;\Varid{a}\to \Conid{Maybe}\;(\Conid{Dtry}\;\Varid{a}){}\<[E]%
\\
\>[B]{}\Varid{insertPaths}\;{}\<[14]%
\>[14]{}[\mskip1.5mu \mskip1.5mu]\;{}\<[29]%
\>[29]{}\Varid{d}{}\<[32]%
\>[32]{}\mathrel{=}{}\<[32E]%
\>[35]{}\Conid{Just}\;\Varid{d}{}\<[E]%
\\
\>[B]{}\Varid{insertPaths}\;{}\<[14]%
\>[14]{}((\Varid{p},\Varid{x})\mathbin{:}\Varid{pairs})\;{}\<[29]%
\>[29]{}\Varid{d}{}\<[32]%
\>[32]{}\mathrel{=}{}\<[32E]%
\>[35]{}\mathbf{do}{}\<[E]%
\\
\>[B]{}\hsindent{3}{}\<[3]%
\>[3]{}\Varid{d'}\leftarrow \Varid{insert}\;\Varid{p}\;\Varid{x}\;\Varid{d}{}\<[E]%
\\
\>[B]{}\hsindent{3}{}\<[3]%
\>[3]{}\Varid{insertPaths}\;\Varid{pairs}\;\Varid{d'}{}\<[E]%
\\[\blanklineskip]%
\>[B]{}\Varid{fromPathMap}\mathbin{::}\Dtry_0\;\Varid{a}\to \Conid{Maybe}\;(\Conid{Dtry}\;\Varid{a}){}\<[E]%
\\
\>[B]{}\Varid{fromPathMap}\;\Varid{d}\mathrel{=}\Varid{insertPaths}\;(\Varid{\Conid{Map}.toList}\;\Varid{d})\;(\Conid{MkDtry}\;\Conid{Nothing}){}\<[E]%
\ColumnHook
\end{hscode}\resethooks

  The wonderful thing about functional programming is that often by writing out your algorithm functionally, you can get a pretty decent induction argument. We will not do the proof that \ensuremath{\Varid{fromPathMap}} is an inverse in all of its fine detail, but a sketch looks like the following.

  Suppose that \ensuremath{\Varid{d}\mathbin{::}\Conid{Dtry}\;\Varid{a}}, \ensuremath{\Varid{d0}\mathbin{::}\Dtry_0\;\Varid{a}} are such that \ensuremath{\Varid{pathMap}\;\Varid{d}\equiv \Varid{d0}} and \ensuremath{\Varid{fromPathMap}\;\Varid{d0}\equiv \Conid{Just}\;\Varid{d}}. Then if \ensuremath{(\Varid{p},\Varid{x})\mathbin{::}([\mskip1.5mu \Conid{Text}\mskip1.5mu],\Varid{a})} is such that \ensuremath{\Varid{p}} is not a prefix of any key in \ensuremath{\Varid{d0}}, we have \ensuremath{\Varid{fromPathMap}\;(\Varid{\Conid{Map}.insert}\;\Varid{p}\;\Varid{x}\;\Varid{d0})\equiv \Conid{Just}\;(\Varid{insert}\;\Varid{p}\;\Varid{x}\;\Varid{d})} and \ensuremath{\Varid{pathMap}\;(\Varid{insert}\;\Varid{p}\;\Varid{x}\;\Varid{d})\equiv \Varid{\Conid{Map}.insert}\;\Varid{p}\;\Varid{x}\;\Varid{d0}}.

  Then, starting from the base case that \ensuremath{\Varid{pathMap}\;(\Conid{MkDtry}\;\Conid{Nothing})\equiv \Varid{\Conid{Map}.empty}} and \ensuremath{\Varid{fromPathMap}\;(\Varid{\Conid{Map}.empty})\equiv \Conid{Just}\;(\Conid{MkDtry}\;\Conid{Nothing})}, we can show inductively that if \ensuremath{\Varid{d0}\mathbin{::}\Dtry_0\;\Varid{a}} is produced by successively inserting new paths that are not prefixes of previous paths, then \ensuremath{\Varid{fmap}\;\Varid{pathMap}\;(\Varid{fromPathMap}\;\Varid{d0})\equiv \Conid{Just}\;\Varid{d0}}.

  Then by the fact \ensuremath{\Varid{pathMap}} is injective, we have that \ensuremath{\Varid{fromPathMap}} is an inverse when restriced to the collection of prefix-free \ensuremath{\Dtry_0}, as required.
\end{proof}

For the rest of the paper, we abbreviate the statement ``$U \Subset \SymbolSet^{\ast}$, $U$ is prefix-free'', by $U \PFSubset \SymbolSet^{\ast}$, so in particular we have $\Dtry \cong \sum_{U \PFSubset \SymbolSet^{\ast}} y^{U}$.

We can now understand the monad structure on $\Dtry$ in terms of this new presentation. We can write out the composition $\Dtry \circ \Dtry$ in the following way.
\[ \Dtry \circ \Dtry = \sum_{U \PFSubset \SymbolSet^{\ast}} \prod_{n \in U} \sum_{V_n \PFSubset \SymbolSet^{\ast}} y^{V_n} \cong \sum_{U \PFSubset \SymbolSet^{\ast}} \sum_{\{V_n \PFSubset \SymbolSet^{\ast}\}_{n \in U}} y^{\sum_{n \in U} V_n} \]
Then on positions, the monad multiplication $\mu \colon \Dtry \circ \Dtry \to \Dtry$ sends the $(U \PFSubset \SymbolSet^{\ast}, \{V_{n} \PFSubset \SymbolSet^{\ast}\}_{n \in U})$ to the prefix-free subset $U \ast \{V_{n}\}_{n \in U} = \{n \ast m \mid n \in U, m \in V_{n}\} \PFSubset \SymbolSet^{\ast}$, where $n \ast m$ is the concatentation of $n,m \in \SymbolSet^{\ast}$. On directions, we send each element $l \in U \ast \{V_{n}\}_{n \in U}$ to the unique pair $(n,m)$ such that $l = n \ast m$. The reason this works is that so long as $U$ and $V_{n}$ are prefix-free,

\[ U \ast \{V_{n}\}_{n \in U} \cong \sum_{n \in U} V_{n} \]

However, the crucial difference between $\sum_{n \in U} V_{n}$ and $U \ast \{V_{n}\}_{n \in U}$ is that concatentation is strictly associative and unital, while tuple-construction is not. That is, $n \ast (m \ast k) = (n \ast m) \ast k$, but $(n,(m,k)) \neq ((n,m),k)$. This will be relevant as a point of comparison for the next section.

\hide{
\begin{hscode}\SaveRestoreHook
\column{B}{@{}>{\hspre}l<{\hspost}@{}}%
\column{E}{@{}>{\hspre}l<{\hspost}@{}}%
\>[B]{}\mathbf{module}\;\Conid{Directories2}\;\mathbf{where}{}\<[E]%
\ColumnHook
\end{hscode}\resethooks
}

\section{Morphisms of directories}

Already, $\Dtry$ is useful as a monad on $\Set$. Specifically, if $T$ is some set of ``variable types,'' then an element of $\Dtry(T)$ represents a collection of typed, named variables. From an implementation standpoint, this is quite convenient. However, $\Dtry$ as a monad on $\Set$ does not tell a mathematical story that covers all of the operations we would like to perform on ``collections of typed, named variables.'' For instance, we might want to rename variables, or consider ``type-preserving functions'' between these collections of variables. The natural way to talk about this mathematically is to form a category where the elements of $\Dtry(T)$ are objects. Then we should lift the monad multiplication to a \emph{functor} $\Dtry(\Dtry(T)) \to \Dtry(T)$, so that we can compose type-preserving functions.

This is the motivation for lifting $\Dtry$ to a monad $\DtryCat$ on $\CatCat$, the category of categories. In fact, as we will see, there are several ways of doing this, depending on what sort of morphisms we want. In this section, we will cover these liftings and how they relate to some well-known constructions within category theory.

We will then prove that $\DtryCat$ is a \emph{2-monad}, which essentially means that it interacts with the 2-category structure of $\CatCat$ in a natural way. This leads to the consideration of 2-algebras of $\DtryCat$. A 2-algebra of $\DtryCat$ is a category $\cC$ with a functor $\DtryCat(\cC) \to \cC$ that can be seen as an ``unbiased monoidal product.'' That is, for any directory of objects in $\cC$, we can combine the objects into a single object. It will then turn out that the 2-category of 2-algebras of $\DtryCat$ is 2-equivalent to the 2-category of cocartesian monoidal categories. Other variations on $\DtryCat$ get the 2-categories equivalent to 2-category of cartesian monoidal categories or the 2-category of symmetric monoidal categories.

This solves a problem that comes up while implementing applied category theory on the computer. Namely, the classical presentation of (cartesian/cocartesian/symmetric) monoidal categories is done via binary monoidal products that are \emph{weakly associative}. For instance, coproduct in $\FinSet$ is only weakly associative. This is annoying because keeping track of the associators is painful. The alternative would be a strict monoidal product, which would require taking a skeleton of $\FinSet$. As an alternative, we can use directories, which give us the ability to name things in an intelligible way. A strict 2-algebra of $\DtryCat$ is mathematically equivalent to traditional monoidal categories, but allows strictness and human-meaningful names to work together.

\subsection{A note on implementation}

One could implement the constructions in this section, starting from a definition of categories in Haskell of the form

\begin{hscode}\SaveRestoreHook
\column{B}{@{}>{\hspre}l<{\hspost}@{}}%
\column{3}{@{}>{\hspre}l<{\hspost}@{}}%
\column{5}{@{}>{\hspre}l<{\hspost}@{}}%
\column{14}{@{}>{\hspre}c<{\hspost}@{}}%
\column{14E}{@{}l@{}}%
\column{18}{@{}>{\hspre}l<{\hspost}@{}}%
\column{E}{@{}>{\hspre}l<{\hspost}@{}}%
\>[3]{}\mathbf{data}\;\Conid{Category}\;\Varid{ob}\;\Varid{hom}\mathrel{=}\Conid{MkCategory}\;\{\mskip1.5mu {}\<[E]%
\\
\>[3]{}\hsindent{2}{}\<[5]%
\>[5]{}\Varid{dom}{}\<[14]%
\>[14]{}\mathbin{::}{}\<[14E]%
\>[18]{}\Varid{hom}\to \Varid{ob},{}\<[E]%
\\
\>[3]{}\hsindent{2}{}\<[5]%
\>[5]{}\Varid{codom}{}\<[14]%
\>[14]{}\mathbin{::}{}\<[14E]%
\>[18]{}\Varid{hom}\to \Varid{ob},{}\<[E]%
\\
\>[3]{}\hsindent{2}{}\<[5]%
\>[5]{}\Varid{id}{}\<[14]%
\>[14]{}\mathbin{::}{}\<[14E]%
\>[18]{}\Varid{ob}\to \Varid{hom},{}\<[E]%
\\
\>[3]{}\hsindent{2}{}\<[5]%
\>[5]{}\Varid{compose}{}\<[14]%
\>[14]{}\mathbin{::}{}\<[14E]%
\>[18]{}\Varid{hom}\to \Varid{hom}\to \Conid{Maybe}\;\Varid{hom}{}\<[E]%
\\
\>[3]{}\mskip1.5mu\}{}\<[E]%
\ColumnHook
\end{hscode}\resethooks

This follows the philosophy of computational category theory found in \cite{rydeheard_Computational_1988}, which is similar in spirit to work by the first author \cite{lynch_GATlab_2024}.

Howevever, this is not included in the current paper for several reasons. First of all, it is in fact somewhat awkward to work with categories in a programming language without dependent types. Secondly, we would have little use for a \emph{Haskell} implementation beyond the current paper; in the future we expect to implement this current section in Julia and perhaps Rust.

We now move on to the development of the theory for the 2-monad $\DtryCat$.

\subsection{The $\Fam$ construction} \label{sec:fam_construction}

The $\Fam$ construction is an endofunctor on $\CatCat$ that is similar to $\DtryCat(\cC)$. The $\Fam$-construction seems to be one of those results which, as is common in category theory, was known as folklore in the 60s and 70s and doesn't have a canonical original reference \cite{varkor_Original_2021}. Fortunately, unlike certain results which are left as folklore, it is not actually difficult to perform the $\Fam$ construction or verify its basic properties, and we do so now.

\begin{definition}
  Given a category $\cC$, a \textbf{family of objects} of $\cC$ consists of an \textbf{indexing set} $I$ and a choice of an object $X_{i} \colon \cC$ for each element $i \colon I$. We will often refer to a family of objects by the notation $(X_{i})_{i \colon I}$. \footnote{Note that we consider $(X_{i})_{i \colon I} = (X_{j})_{j \colon I}$, and we may use that renaming when convenient. In computer science, this is known as ``alpha-equivalence.''} A \textbf{morphism of families} from $(X_{i})_{i \colon I}$ to $(Y_{j})_{j \colon J}$ consists of a function $f \colon I \to J$ along with a family of morphisms $(f_{i} \colon X_{i} \to Y_{f(i)})_{i \colon I}$.
\end{definition}

\begin{example}
Pictured below is a morphism of families from $(X_i)_{i \colon \{1,\ldots,5\}}$ to $(Y_i)_{i \colon \{1,\ldots,4\}}$.

\begin{center}
\begin{tikzpicture}[every node/.style={outer sep=0, inner sep=1}, out=0, in=180]
  \node (X1) at (-2,     1) {$X_{1}$};
  \node (X2) at (-2,   0.5) {$X_{2}$};
  \node (X3) at (-2,     0) {$X_{3}$};
  \node (X4) at (-2,  -0.5) {$X_{4}$};
  \node (X5) at (-2,    -1) {$X_{5}$};
  \node (Y1) at ( 1,  0.75) {$Y_{1}$};
  \node (Y2) at ( 1,  0.25) {$Y_{2}$};
  \node (Y3) at ( 1, -0.25) {$Y_{3}$};
  \node (Y4) at ( 1, -0.75) {$Y_{4}$};
  \draw[dotted] (-2.5, 1) -- (-2.5, -1) arc [start angle=180, end angle = 360, radius=0.5] -- (-1.5, 1) arc [start angle=0, end angle = 180, radius=0.5];
  \draw[dotted] (0.5, 0.75) -- (0.5, -0.75) arc [start angle=180, end angle = 360, radius=0.5] -- (1.5, 0.75) arc [start angle=0, end angle = 180, radius=0.5];
  \draw[->] (X1) -- ($(X1)+(1, 0)$) node[pos=0.75, above] {$f_1$} to (Y2);
  \draw[->] (X2) -- ($(X2)+(1, 0)$) node[pos=0.75, above] {$f_2$} to (Y1);
  \draw[->] (X3) -- ($(X3)+(1, 0)$) node[pos=0.75, above] {$f_3$} to (Y4);
  \draw[->] (X4) -- ($(X4)+(1, 0)$) node[pos=0.75, above] {$f_4$} to (Y1);
  \draw[->] (X5) -- ($(X5)+(1, 0)$) node[pos=0.75, above] {$f_5$} to (Y2);
\end{tikzpicture}
\end{center}
\end{example}

\begin{definition}
  Given a category $\cC$, let $\Fam(\cC)$ be the category whose objects are families of objects of $\cC$ and whose morphisms are morphisms of families. Given families $(X_{i})_{i \colon I}$, $(Y_{j})_{j \colon J}$, $(Z_{k})_{k \colon K}$ and morphisms $(f_{i} \colon X_{i} \to Y_{f(i)})_{i \colon I}$, $(g_{j} \colon Y_{j} \to Z_{g(j)})_{j \colon J}$, we define the composite $(f \cmp g)$ by $(f \cmp g)(i) = g(f(i))$ and
  \[(f \cmp g)_{i} = X_{i} \xrightarrow{f_{i}} Y_{f(i)} \xrightarrow{g_{f(i)}} Z_{g(f(i))}\]
  Similarly, the identity on $(X_{i})_{i \colon I}$ consists of the identity on $I$, and the family $(1_{X_{i}})_{i \colon I}$.
\end{definition}

The category $\Fam(\cC)$ is sometimes also known as the \emph{free coproduct completion} of $\Fam(\cC)$. The intuition for this is that $(X_{i})_{i \colon I}$ as the ``formal coproduct'' of the $X_{i}$ in the following way.

Consider the functor $\Single \colon \cC \to \Fam(\cC)$ defined by $\Single(X) := (X)_{\_ \colon \{0\}}$, which is the family with indexing set $\{0\}$ and whose choice of object for $0$ is $X$. Then for an arbitrary family $(X_{i})_{i \colon I}$, for each $i \colon I$ there is an injection $\iota_{i} \colon \Single(X_{i}) \to (X_{i'})_{i' \colon I}$ where $\iota_{i}(0) = i$, and $\iota_{i,0} = 1_{X_{i}}$. These injections form a cocone, and it can be shown that this cocone satisfies the universal property making $(X_{i})_{i \colon I}$ the coproduct of the diagram $I \to \Fam(\cC)$, $i \mapsto \Single(X_{i})$.

What it means for $\Fam(\cC)$ to be ``free'' is that given a category $\cD$ with coproducts, there is a natural equivalence between the category of all functors $\cC \to \cD$ and the category of coproduct-preserving functors $\Fam(\cC) \to \cD$. Morally, $\Fam$ is a left adjoint to the forgetful functor from the category of ``categories with coproducts'' to the category of categories; but in a 2-categorical sense that we will not get into~\cite{perrone_Kan_2022}.

There are then several variations on the basic theme of the $\Fam$ construction, which produce different ``free completions.''

\begin{definition}
  For a category $\cC$, let $\FinFam(\cC)$ be the full subcategory\footnote{A full subcategory is a subcategory such that if $X$ and $Y$ are in the subcategory, then all morphisms between $X$ and $Y$ are also in the subcategory.} of $\Fam(\cC)$ consisting of families $(X_{i})_{i \colon I}$ with $I$ finite.
\end{definition}

The category $\FinFam(\cC)$ can be seen as the ``free finite coproduct completion'' of $\cC$, and satisfies a similar property to $\Fam(\cC)$ with respect to categories that have finite coproducts.

With a judicious application of duality, we can also get products. Specifically, for a category $\cC$, $\Fam(\cC^{\op})^{\op}$ is a category where objects are families $(X_{i})_{i \colon I}$, and a morphism $(X_{i})_{i \colon I} \to (Y_{j})_{j \colon J}$ consists of a function $f \colon J \to I$ along with a choice of $f_{j} \colon X_{f(j)} \to Y_{j}$ for each $j \colon J$.

We can now see how $\Fam(\cC^{\op})^{\op}$ is the ``free product completion'' of $\cC$. For a family $(X_{i})_{i \colon I}$, there are ``projection morphisms'' $\pi_{i} \colon (X_{i'})_{i' \colon I} \to \Single(X_{i})$ for each $i \colon I$, dual to the injection morphisms in $\Fam(\cC)$. Similarly, $\FinFam(\cC^{\op})^{\op}$ is the ``free finite product completion.''

Finally, we can do a similar construction for (possibly symmetric) (strict/weak) monoidal categories.

\begin{definition}
  For a category $\cC$, let $\FinFam_{\cong}(\cC)$ be the wide subcategory\footnote{A wide subcategory contains all of the objects, but not necessarily all of the morphisms} of $\FinFam(\cC)$ consisting of $f \colon (X_{i})_{i \colon I} \to (Y_{j})_{j \colon J}$ such that $i \mapsto f(i)$ is a bijection. Let $\FinFam_{=}(\cC)$ be the full subcategory of $\FinFam_{\cong}(\cC)$ consisting of only families on the indexing sets $I_{n} = \{1,\ldots,n\}$ for $n \colon \bN$. Let $\FinFam_{\cong}^{<}(\cC)$ be the category where the objects are pairs of \emph{totally ordered} indexing sets $I$ and families $(X_{i})_{i \colon I}$, and morphisms are order-preserving, and finally let $\FinFam_{=}^{<}(\cC)$ be the full subcategory of $\FinFam_{\cong}^{<}(\cC)$ on the totally ordered sets $I_{n}$.
\end{definition}

Respectively, $\FinFam_{\cong}(\cC)$, $\FinFam_{=}(\cC)$, $\FinFam_{\cong}^{<}(\cC)$, and $\FinFam_{=}^{<}(\cC)$ are the free symmetric weak monoidal category, free symmetric strict monoidal category, free weak monoidal category, and free strict monoidal category on $\cC$.

Any decent operation that acts on categories should also act on functors, and $\Fam$ is no exception.

\begin{definition}
  Suppose that $\cC$ and $\cD$ are two categories, and $F \colon \cC \to \cD$ is a functor between them. Then define $\Fam(F) \colon \Fam(\cC) \to \Fam(\cD)$ in the following way. For a family $(X_{i})_{i \colon I}$, let
  \[\Fam(F)((X_{i})_{i \colon I}) = (F(X_{i}))_{i \colon I}.\]
  Similarly, for a morphism $(f_{i} \colon X_{i} \to Y_{f(i)})_{i \colon I}$, let $\Fam(F)(f) = (F(f_{i}))_{i \colon I}$.
\end{definition}

It is not hard to show that with this definition $\Fam$ is indeed a functor $\CatCat \to \CatCat$, and mutatis mutandis all of the other $\Fam$ constructions ($\FinFam$, $\Fam((-)^{\op})^{\op}$, $\FinFam_{\cong}$, etc.) are likewise functorial.

It is much harder to show that $\Fam$ is a monad on $\CatCat$. This is because $\Fam$ is in fact \emph{not} a monad $\CatCat$, or at least not in the way we want it to be.

The obstruction is the following. We might want the monad multiplication for $\Fam$ to take a nested family $((X_{i,j})_{j \colon J_{i}})_{i \colon I}$ to the family $(X_{i,j})_{(i,j) \in \sum_{i \in I} J_{i}}$. However, when we have a triply-nested family $(((X_{i,j,k})_{k \colon K_{i,j}})_{j \colon J_{i}})_{i \colon I}$, then depending on the order in which we apply multiplication, we could end up with
\[ (X_{i,j,k})_{(i,(j,k)) \colon \sum_{i \colon I} \sum_{j \colon J_{i}} K_{i,j}}\]
or
\[ (X_{i,j,k})_{((i,j),k) \colon \sum_{(i,j) \colon \sum_{i \colon I} J_{i}} K_{i,j}}\]
which are isomorphic families, but not equal. For a fan of coherence conditions, it is a ``fun'' exercise to show that with this multiplication operation $\Fam$ forms a \emph{pseudomonad}, but fans of coherence conditions are few and far between.

Incidentally, $\FinFam_{=}$, which only takes its indexing sets to be $\{1,\ldots,n\}$, \emph{is} a monad (or better: a strict 2-monad), because in $\FinFam_{=}$ isomorphic indexing sets are equal. However, from a human-computer-interface perspective, indexing by integers is non-ideal. In the next section, we will show that we can do a construction equivalent to $\FinFam$ and $\FinFam_{=}$, which indexes families by paths in a directory, that has strict monad operations in the same way that $\FinFam_{=}$ does. This provides a coproduct completion that is user-friendly for the mathematician and programmer (via strictness) and is user-friendly for the modeler (via intelligble naming).



\subsection{Indexing by paths in a directory}

We now turn to the task of constructing $\DtryCat$. Just like with $\Fam$, there are many variants, but we will stick with the one that is analogous to $\FinFam$.

First, what precisely is $\DtryCat$? $\Dtry$ is a monad on $\Set$; one natural answer would be that $\DtryCat$ is a monad on $\CatCat$. However, $\CatCat$ has more structure than $\Set$; $\CatCat$ is a 2-category because $\CatCat$ has categories, functors, \emph{and} natural transformations. If we only define $\DtryCat$ as a monad, then it only applies to categories and functors. In order to make it apply to natural transformations, we must make it into a \emph{2-monad}. However, at a first pass the reader can ignore this complication, and just think about $\DtryCat$ as a monad on $\CatCat$; mentally replacing every time we say 2-monad with monad.

The basic idea is the following. There is a general construction detailed in \cref{appendix:lifting_monads} that allows us to take the monad $\Dtry$ on $\Set$ and lift it to a 2-monad $\DtryCat_{0}$ on $\CatCat$. However, this 2-monad is not equivalent to $\FinFam_{=}$; it ``doesn't have enough morphisms.'' We then ``graft'' the morphisms from $\FinFam_{=}$ onto $\DtryCat_{0}$ to make the final product $\DtryCat$; this uses another general construction detailed in \cref{appendix:boff_factorization}.

While we use some high-powered category theory in the appendices, this is only because we are lazy about proving things and thus want to use our favorite tools while doing so. The actual definitions of $\DtryCat_{0}$ and $\DtryCat$ are quite elementary.

\begin{definition}
  For a category $\cC$, which we can interpret (modulo size issues) as a span of sets $\cC_{0} \leftarrow \cC_{1} \to \cC_{0}$, let $\DtryCat_{0}(\cC)$ be the category with underlying span $\Dtry(\cC_{0}) \leftarrow \Dtry(\cC_{1}) \to \Dtry(\cC_{0})$.
\end{definition}

\begin{proposition}
  $\DtryCat_{0}$ is a 2-monad on $\CatCat$.
\end{proposition}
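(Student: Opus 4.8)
The plan is to obtain $\DtryCat_{0}$ by applying the \emph{internal category object} construction to the monad $\Dtry$, and then to reduce each 2-monad axiom to the corresponding axiom for $\Dtry$ on $\Set$. The single structural fact that powers everything is that $\Dtry$ \emph{preserves pullbacks}: since $\Dtry \cong \sum_{U \PFSubset \SymbolSet^{\ast}} y^{U}$ is a coproduct of representables, and coproducts commute with connected limits in the extensive category $\Set$, the functor $\Dtry$ preserves all connected limits, in particular pullbacks and the iterated pullbacks appearing below.

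First I would use the internal description of $\CatCat$. A small category $\cC$ is a span $\cC_{0} \xleftarrow{s} \cC_{1} \xrightarrow{t} \cC_{0}$ together with an identity map $e \colon \cC_{0} \to \cC_{1}$ and a composition $m \colon \cC_{1} \times_{\cC_{0}} \cC_{1} \to \cC_{1}$, subject to unit and associativity equations expressed as commuting diagrams built from $s,t,e,m$ and the pullbacks $\cC_{1} \times_{\cC_{0}} \cC_{1}$ and $\cC_{1} \times_{\cC_{0}} \cC_{1} \times_{\cC_{0}} \cC_{1}$; functors and natural transformations are maps of the underlying sets subject to commuting diagrams of the same shape. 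Applying $\Dtry$ levelwise and using pullback-preservation to identify $\Dtry(\cC_{1} \times_{\cC_{0}} \cC_{1})$ with $\Dtry(\cC_{1}) \times_{\Dtry(\cC_{0})} \Dtry(\cC_{1})$ turns this data into an internal category, which is exactly $\DtryCat_{0}(\cC)$; because a functor preserves commuting diagrams, the unit and associativity equations transport for free. The same preservation produces the action of $\DtryCat_{0}$ on functors and natural transformations and shows it respects identities, composition, and vertical and horizontal composition of 2-cells, so $\DtryCat_{0}$ is a strict 2-functor $\CatCat \to \CatCat$.

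Next I would lift $\eta \colon 1 \Rightarrow \Dtry$ and $\mu \colon \Dtry \circ \Dtry \Rightarrow \Dtry$ to 2-natural transformations $1 \Rightarrow \DtryCat_{0}$ and $\DtryCat_{0} \circ \DtryCat_{0} \Rightarrow \DtryCat_{0}$, using that $\DtryCat_{0} \circ \DtryCat_{0}$ is the internal-category construction applied to $\Dtry \circ \Dtry$ (which holds strictly, since a composite of pullback-preserving functors is pullback-preserving). At a category $\cC$ the candidate functor has object- and morphism-maps $\eta_{\cC_{0}}, \eta_{\cC_{1}}$ (resp. $\mu_{\cC_{0}}, \mu_{\cC_{1}}$); compatibility with $s,t,e$ is naturality at those maps, and compatibility with $m$ follows because, for pullback-preserving $S,T$, the component of any natural transformation $S \Rightarrow T$ at a pullback coincides with the induced map on pullbacks, so naturality at $m$ is exactly preservation of composition. 2-naturality is then inherited from ordinary naturality of $\eta,\mu$ via the levelwise description of 1- and 2-cells.

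Finally, the two unit laws and the associativity law for $(\DtryCat_{0}, \eta, \mu)$ are equalities of 2-natural transformations, hence may be checked componentwise as equalities of functors; two internal functors agreeing on objects and morphisms are equal, and there the maps are built solely from $\eta_{\cC_{0}}, \mu_{\cC_{0}}, \eta_{\cC_{1}}, \mu_{\cC_{1}}$, so the laws reduce precisely to the strict monad laws for $\Dtry$ on $\Set$. I expect the main obstacle to be the bookkeeping around pullback-preservation: verifying that $\Dtry$ really preserves the cospan- and zig-zag-shaped limits defining composition and associativity, and that the canonical comparison isomorphisms are coherent enough that ``apply $\Dtry$ levelwise'' yields genuine internal-category data with no residual associators. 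Once that is secured from $\Dtry$ being polynomial, every other step is the mechanical fact that functors send commuting diagrams to commuting diagrams.
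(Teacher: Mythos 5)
Your proposal is correct, but it takes a genuinely different route from the paper. The paper (Appendix B) does not verify anything levelwise: it invokes the formal theory of monads, constructing a 2-functor $\LCC \to \TwoCat$ as the composite $\SpanCat$, then $\Mon$, then $\ArrowTwoCat$ through (virtual) double categories, citing Grandis, Leinster, Shulman, and Lambert for the constituent pieces, and then uses the fact that 2-functors send monads in one 2-category to monads in another: a \emph{cartesian} monad on $\Set$ is a monad in $\LCC$, hence its image is a monad in $\TwoCat$, i.e.\ a 2-monad, and $\DtryCat_0$ is the instance at $\Dtry$. Your hands-on construction replaces all of this with the single observation that $\Dtry$, being a coproduct of representables, preserves pullbacks, followed by direct levelwise verification. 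Two comparison points are worth making explicit. First, the paper's hypothesis is strictly stronger than yours: a cartesian monad requires the naturality squares of $\eta$ and $\mu$ to be pullbacks (this is what membership in $\LCC$ demands of 2-cells), whereas your argument correctly shows that plain naturality suffices for the lift, since for any natural transformation between pullback-preserving functors the component at a pullback agrees with the induced map on pullbacks --- your verification of this via the universal property of $T(A) \times_{T(C)} T(B)$ is sound. (The cartesianness of $\eta^{\Dtry}$ and $\mu^{\Dtry}$ does hold --- on directions $\mu$ is the bijection $U \ast \{V_n\}_{n \in U} \cong \sum_{n \in U} V_n$ --- but neither you nor, explicitly, the paper needs to check it for your route.) Second, what each approach buys: the paper's machinery is modular and reusable --- the same composite 2-functor yields structure at the double-categorical level that is relevant to multicategories later, and the general theorem applies to any cartesian monad on any locally cartesian category --- while your proof is self-contained, avoids virtual double categories entirely, and localizes all the real content in the comparison-isomorphism bookkeeping you flag at the end. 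That bookkeeping is indeed routine (the comparison isomorphism of a composite of pullback-preserving functors is the composite of the comparisons, so $\DtryCat_0 \circ \DtryCat_0$ is strictly the internal-category construction applied to $\Dtry \circ \Dtry$ once one fixes the canonical composable-pairs pullback in $\Set$), and internal functors being determined by their underlying maps reduces the monad laws to the $\Set$-level laws exactly as you say; your appeal to levelwise 2-naturality plays the role that the paper's unicity lemma (\cref{lemma:unicity}) plays for $\DtryCat$ itself.
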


\begin{proof}
  See \cref{appendix:lifting_monads}.
\end{proof}

The problem with $\DtryCat_{0}(\cC)$ is there only exist morphisms between objects that have the same directory structures. That is, one can have a morphism in $\DtryCat_{0}(\Set)$ between $[\sym{a} \Rightarrow \bR, \sym{b.c} \Rightarrow \bZ]$ and $[\sym{a} \Rightarrow \bR^{2}, \sym{b.c} \Rightarrow \bB]$ given by something like $[\sym{a} \Rightarrow (x \mapsto (x,x)), \sym{b.c} \mapsto \mathrm{iseven}]$. However, there is no morphism between $[\sym{a} \Rightarrow \bR]$ and $[\sym{b} \Rightarrow \bR]$.

We solve this problem by ``adding in'' the morphisms from $\FinFam_{=}$, using the following construction.

\begin{proposition}
  Let $F \colon \cC \to \cD$ be a functor. Then there exists a unique-up-to-isomorphism factorization
  \[\begin{tikzcd}
      \cC && \cD \\
      & \cE
      \arrow["F", from=1-1, to=1-3]
      \arrow["{F_{\mathit{bo}}}"', from=1-1, to=2-2]
      \arrow["{F_{\mathit{ff}}}"', from=2-2, to=1-3]
    \end{tikzcd}\]
  such that $F_{\mathit{bo}}$ is bijective on objects, and $F_{\mathit{ff}}$ is fully-faithful. We call this the \textbf{bo-ff factorization} of $F$.
\end{proposition}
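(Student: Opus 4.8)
The plan is to build the intermediate category $\cE$ by keeping the objects of $\cC$ but importing hom-sets from $\cD$ along $F$. Concretely, I would set $\Ob(\cE) := \Ob(\cC)$ and, for objects $X, Y$, define $\Hom_\cE(X, Y) := \Hom_\cD(F(X), F(Y))$, taking identities and composition in $\cE$ to be literally those of $\cD$ (so $1^\cE_X := 1_{F(X)}$, and the composite of $g \colon X \to Y$ and $h \colon Y \to Z$ in $\cE$ is their composite $g \cmp h$ in $\cD$). Since the hom-sets, identities, and composites of $\cE$ are by definition data of $\cD$, the category axioms for $\cE$ are inherited for free from those of $\cD$. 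I then define $F_{\mathit{bo}} \colon \cC \to \cE$ to be the identity on objects and to send $f \colon X \to Y$ to $F(f) \in \Hom_\cD(F X, F Y) = \Hom_\cE(X,Y)$, whose functoriality is exactly the functoriality of $F$; and I define $F_{\mathit{ff}} \colon \cE \to \cD$ by $X \mapsto F(X)$ on objects and by the identity map $\Hom_\cE(X,Y) = \Hom_\cD(F X, F Y)$ on morphisms.

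With $\cE$ in hand, each required property is immediate by construction, and I would just record them. First, $F_{\mathit{bo}}$ is the identity on objects, hence bijective on objects. Second, $F_{\mathit{ff}}$ acts as the identity on every hom-set $\Hom_\cE(X,Y) \to \Hom_\cD(F X, F Y)$, so it is fully faithful. Third, $F_{\mathit{bo}} \cmp F_{\mathit{ff}} = F$, since on objects $X \mapsto X \mapsto F(X)$ and on morphisms $f \mapsto F(f) \mapsto F(f)$. This settles existence.

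For uniqueness up to isomorphism, I would suppose $F = P \cmp Q = P' \cmp Q'$ are two such factorizations, through categories $\cE$ and $\cE'$ respectively, with $P, P'$ bijective on objects and $Q, Q'$ fully faithful, and produce an isomorphism of categories $G \colon \cE \to \cE'$ satisfying $P \cmp G = P'$ and $G \cmp Q' = Q$. On objects, $G$ is forced: using that $P, P'$ are bijections on objects, set $G(A) := P'(X)$ where $X$ is the unique $\cC$-object with $P(X) = A$. The key objectwise identity is then $Q(A) = (P \cmp Q)(X) = F(X) = (P' \cmp Q')(X) = Q'(G A)$. Consequently, on a hom-set I define $G$ as the composite bijection $\Hom_\cE(A,B) \xrightarrow{\,Q\,} \Hom_\cD(Q A, Q B) = \Hom_\cD(Q' G A, Q' G B) \xrightarrow{\,(Q')^{-1}\,} \Hom_{\cE'}(G A, G B)$, which is a bijection precisely because $Q$ and $Q'$ are fully faithful. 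Functoriality of $G$ and the two compatibility equations follow formally from faithfulness of $Q, Q'$ together with the identity $Q A = Q' G A$, and the symmetric construction exhibits the inverse functor.

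The existence half is entirely routine, as $\cE$ is glued out of the data of $\cD$. The one genuinely delicate point, which I would write out with care, is the uniqueness step: checking that $G$ is well-defined and functorial on morphisms, which rests squarely on the full-faithfulness of \emph{both} $Q$ and $Q'$ and on the objectwise compatibility $Q A = Q' G A$ inherited from the shared factorization of $F$.
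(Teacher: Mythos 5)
Your construction is exactly the one the paper sketches — $\Ob(\cE) := \Ob(\cC)$ with $\Hom_\cE(X,Y) := \Hom_\cD(F(X), F(Y))$ — and your fleshed-out details, including the uniqueness argument via the forced object assignment and the hom-set bijections through the fully faithful legs, are correct. The paper leaves all of this as a remark that the result is well-known, so your proposal simply completes the same approach.
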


\begin{proof}
  This is well-known. The basic idea is to let $\cE_{0} = \cC_{0}$ and $\Hom_{\cE}(X, Y) = \Hom_{\cD}(F(X), F(Y))$.
\end{proof}

\begin{definition}
  Let $\cC$ be a category. Define $\PathFamily_{\cC} \colon \DtryCat_{0}(\cC) \to \FinFam(\cC)$ in the following way. Recall again that
  \[ \Dtry(\cC_{0}) = \sum_{d \colon \Dtry(1)} \cC_{0}^{\Dtry[d]} \]
  Accordingly, let
  \[ \PathFamily(d, X \colon \Dtry[d] \to \cC_{0}) = (X(p))_{p \colon \Dtry[d]} \colon \FinFam(\cC) \]
  Then define $\DtryCat(\cC)$ via the bo-ff factorization
  \[\begin{tikzcd}
    {\DtryCat_0(\cC)} && {\FinFam(\cC)} \\
    & {\DtryCat(\cC)}
    \arrow["\PathFamily_{\cC}", from=1-1, to=1-3]
    \arrow["{\PathFamily_{\mathit{bo}, \cC}}"', from=1-1, to=2-2]
    \arrow["{\PathFamily_{\mathit{ff}, \cC}}"', from=2-2, to=1-3]
  \end{tikzcd}\]
\end{definition}

If we write this out more explicitly, we get that $\DtryCat(\cC)$ is the category such that...

\begin{enumerate}
  \item An object is a pair $(d \colon \Dtry(1), X \colon \Dtry[d] \to \cC_{0})$
  \item A morphism from $(d, X)$ to $(d', X')$ consists of a pair of a function $f_{0} \colon \Dtry[d_{X}] \to \Dtry[d_{Y}]$ and for each $p \in \Dtry[d_{X}]$, a function $f_{1}(p) \colon X(p) \to X'(f_{0}(p))$. Equivalently, a morphism consists of a diagram
\[\begin{tikzcd}
	{\Dtry[d]} && {\Dtry[d']} \\
	& \cC
	\arrow["{f_0}", from=1-1, to=1-3]
	\arrow[""{name=0, anchor=center, inner sep=0}, "X"', from=1-1, to=2-2]
	\arrow[""{name=1, anchor=center, inner sep=0}, "{X'}", from=1-3, to=2-2]
	\arrow["{f_1}", shorten <=7pt, shorten >=7pt, Rightarrow, from=0, to=1]
\end{tikzcd}\]
\end{enumerate}
We show in \cref{appendix:boff_factorization} that with this definition, $\DtryCat$ is a 2-monad.
We conclude the story for $\DtryCat$ with the following proposition.

\begin{proposition}
  For a category $\cC$, the functor $\PathFamily_{\mathit{ff}, \cC} \colon \DtryCat(\cC) \to \FinFam(\cC)$ is an equivalence of categories.
\end{proposition}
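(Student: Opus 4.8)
The plan is to use the standard criterion that a functor is an equivalence precisely when it is fully faithful and essentially surjective on objects, and to treat these two conditions separately. Fully faithfulness is immediate and requires no work: by the very definition of $\DtryCat(\cC)$ as the bo-ff factorization of $\PathFamily_{\cC}$, the functor $\PathFamily_{\mathit{ff},\cC}$ is the fully-faithful half of that factorization. Hence the entire content of the proposition is essential surjectivity, and that is where I would concentrate.

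To unwind what essential surjectivity demands, recall that since $\PathFamily_{\mathit{bo},\cC}$ is bijective on objects, the objects of $\DtryCat(\cC)$ are exactly the pairs $(d \colon \Dtry(1), X \colon \Dtry[d] \to \cC_0)$, and $\PathFamily_{\mathit{ff},\cC}$ sends such a pair to the family $(X(p))_{p \colon \Dtry[d]}$. An object of $\FinFam(\cC)$ is a finite family $(Y_i)_{i \colon I}$, and two families are isomorphic exactly when there is a bijection between their indexing sets together with a family of componentwise isomorphisms. Taking every component isomorphism to be an identity, it therefore suffices to establish the following combinatorial claim: for every finite set $I$ there is a position $d \colon \Dtry(1)$ together with a bijection $\Dtry[d] \cong I$. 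Given such a $d$ and a bijection $\phi \colon \Dtry[d] \to I$, setting $X := (p \mapsto Y_{\phi(p)})$ yields an object with $\PathFamily_{\mathit{ff},\cC}(d, X) \cong (Y_i)_{i \colon I}$, while the empty family is realized by the empty directory.

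Using the presentation $\Dtry \cong \sum_{U \PFSubset \SymbolSet^{\ast}} y^{U}$, a position $d$ is a finite prefix-free subset $U \subseteq \SymbolSet^{\ast}$ and $\Dtry[d] = U$, so the claim reduces to showing that for every $n \colon \bN$ there is a prefix-free subset of $\SymbolSet^{\ast}$ of cardinality $n$. This is the only genuine obstacle, and its subtlety is that the paper assumes merely $|\SymbolSet| \geq 2$, so one cannot simply take $n$ distinct length-one paths. Fixing two distinct symbols $\sym{0}, \sym{1} \colon \SymbolSet$, I would exhibit the explicit family $\{\sym{0}^{k}\sym{1} \mid 0 \leq k < n\}$ and verify prefix-freeness: each listed path ends in its unique occurrence of $\sym{1}$, so $\sym{0}^{k}\sym{1}$ can be a prefix of $\sym{0}^{m}\sym{1}$ only when $k = m$. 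This produces a prefix-free subset of every finite size, completes the verification of essential surjectivity, and hence establishes the equivalence. Everything categorical is supplied for free by the bo-ff factorization, so the entire difficulty is localized in this elementary combinatorial construction.
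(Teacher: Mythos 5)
Your proof is correct and follows essentially the same route as the paper: fully faithfulness is free from the bo-ff factorization, and essential surjectivity reduces to realizing every finite cardinality as $\Dtry[d]$ for some position $d$. Your explicit comb-shaped prefix-free family $\{\sym{0}^{k}\sym{1} \mid 0 \leq k < n\}$ is simply a concrete instance of the paper's observation that one can build a binary tree with any number of leaves (and it correctly handles the constraint $|\SymbolSet| \geq 2$ and the empty case).
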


\begin{proof}
  By definition, $\PathFamily_{\mathit{ff}}$ is fully faithful, so to show that it is an equivalence it suffices to show that it is essentially surjective. That is, for all $(Y_{i})_{i \colon I} \colon \FinFam(\cC)$, there exists $(d \colon \Dtry(1), X \colon \Dtry[d] \to \cC) \colon \DtryCat(\cC)$ with $\PathFamily_{\mathit{ff}} \cong (Y_{i})_{i \colon I}$. This follows from the fact that for any finite set $I$, there exists $d \colon \Dtry(1)$ such that $\Dtry[d] \cong I$, which can be proved by noting that one can make a binary tree with any number of leaves.
\end{proof}

Now, for concreteness we did all of the above starting from $\FinFam$. However, we learned in \cref{sec:fam_construction} that there are many variations of $\FinFam$. There are two variations of $\FinFam$ in particular that we are interested in: $\FinFam((-)^{\op})^{\op}$, and $\FinFam_{\cong}$. For the former, a morphism of $\DtryCat((-)^{\op})^{\op}$ from $(d,X)$ to $(d',X')$ consists of a diagram
\[\begin{tikzcd}
	{\Dtry[d]} && {\Dtry[d']} \\
	& \cC
	\arrow["{f_0}", from=1-1, to=1-3]
	\arrow[""{name=0, anchor=center, inner sep=0}, "X"', from=1-1, to=2-2]
	\arrow[""{name=1, anchor=center, inner sep=0}, "{X'}", from=1-3, to=2-2]
	\arrow["{f_1}"', shorten <=7pt, shorten >=7pt, Rightarrow, from=1, to=0]
\end{tikzcd}\]
For the latter, a morphism of $\DtryCat_{\cong}(-)$ consists of a diagram
\[\begin{tikzcd}
	{\Dtry[d]} && {\Dtry[d']} \\
	& \cC
	\arrow["{f_0}", from=1-1, to=1-3]
	\arrow[""{name=0, anchor=center, inner sep=0}, "X"', from=1-1, to=2-2]
	\arrow[""{name=1, anchor=center, inner sep=0}, "{X'}", from=1-3, to=2-2]
	\arrow["{f_1}", shorten <=7pt, shorten >=7pt, Rightarrow, from=0, to=1]
\end{tikzcd}\]
where $f_{0}$ is a bijection. We will see in the next section that 2-algebras of $\DtryCat$ correspond to cocartesian monoidal categories, 2-algebras of $\DtryCat((-)^{\op})^{\op}$ correspond to cartesian monoidal categories, and 2-algebras of $\DtryCat_{\cong}$ correspond to symmetric monoidal categories.

\subsection{2-algebras of $\DtryCat$}

In this section, we reap the rewards of the hard work we did proving that $\DtryCat$ was a 2-monad. The theory of 2-algebras of 2-monads is well-established, so there is essentially no new work in this section, but for the reader not well-versed in 2-monad theory we provide an account of some of the basics in the context of $\DtryCat$.

For the remainder of this section, let $T$ be one of $\DtryCat$, $\DtryCat((-)^{\op})^{\op}$, or $\DtryCat_{\cong}$; mostly we will think about $T = \DtryCat_{\cong}$ but the other cases are also useful. We now will essentially just follow \cite[4]{lack_2_2009}, explaining what each of the definitions means for these choices of $T$.

First, recall the definition of a 2-algebra. The reader familiar with regular monad algebras will notice that the definition is exactly the same; this is not a mistake. The distinction between algebras and 2-algebras comes only when we consider morphisms.

\begin{definition}
  A 2-monad 2-algebra for $T$ is a category $\cC$ along with a functor $A \colon T \cC \to \cC$ such that the following diagrams commute
\[\begin{tikzcd}
	{T^2\cC} & {T\cC} \\
	{T\cC} & \cC
	\arrow["{TA}", from=1-1, to=1-2]
	\arrow["{\mu_\cC}"', from=1-1, to=2-1]
	\arrow["A", from=1-2, to=2-2]
	\arrow["A"', from=2-1, to=2-2]
\end{tikzcd}\]
\[\begin{tikzcd}
	\cC && {T\cC} \\
	& \cC
	\arrow["{\eta_\cC}", from=1-1, to=1-3]
	\arrow[Rightarrow, no head, from=1-1, to=2-2]
	\arrow["A", from=1-3, to=2-2]
\end{tikzcd}\]
  We will often just call $(\cC, A)$ a $T$-algebra.
\end{definition}

\begin{example}
  Any symmetric strict monoidal category $(\cC, \otimes, I)$ forms a $\DtryCat_{\cong}$-algebra. The map $A_{\otimes} \colon \DtryCat_{\cong}(\cC) \to \cC$ sends $(d \in \Dtry(1), X \colon \Dtry[d] \to \cC)$ to
  \[ \bigotimes_{p \in \Dtry[d]} X(p) \]
  where we order $\Dtry[d] \PFSubset \SymbolSet^{\ast}$ lexicographically. The functor $A_{\otimes}$ then sends a morphism in $\DtryCat_{\cong}(\cC)$ of the form
  \begin{center}
  \begin{tikzpicture}[every node/.style={outer sep=0, inner sep=1}, out=0, in=180]
    \node (X1) at (-2,  0.75) {$X{(p_1)}$};
    \node (X2) at (-2,  0.25) {$X{(p_2)}$};
    \node (X3) at (-2, -0.25) {$X{(p_3)}$};
    \node (X4) at (-2, -0.75) {$X{(p_4)}$};
    \node (Y1) at ( 1,  0.75) {$Y{(p_1)}$};
    \node (Y2) at ( 1,  0.25) {$Y{(p_2)}$};
    \node (Y3) at ( 1, -0.25) {$Y{(p_3)}$};
    \node (Y4) at ( 1, -0.75) {$Y{(p_4)}$};
    \draw[dotted] (-2.6, 0.75) -- (-2.6, -0.75) arc [start angle=180, end angle = 360, radius=0.6] -- (-1.4, 0.75) arc [start angle=0, end angle = 180, radius=0.6];
    \draw[dotted] (0.4, 0.75) -- (0.4, -0.75) arc [start angle=180, end angle = 360, radius=0.6] -- (1.6, 0.75) arc [start angle=0, end angle = 180, radius=0.6];
    \draw[->] (X1) -- ($(X1)+(1.25, 0)$) node[pos=0.75, above] {$f_1$} to (Y1);
    \draw[->] (X2) -- ($(X2)+(1.25, 0)$) node[pos=0.75, above] {$f_2$} to (Y3);
    \draw[->] (X3) -- ($(X3)+(1.25, 0)$) node[pos=0.75, above] {$f_3$} to (Y4);
    \draw[->] (X4) -- ($(X4)+(1.25, 0)$) node[pos=0.75, above] {$f_4$} to (Y2);
  \end{tikzpicture}
  \end{center}
  to a morphism from $X(p_{1}) \otimes X(p_{2}) \otimes X(p_{3}) \otimes X(p_{4})$ to $Y(p_{1}) \otimes Y(p_{2}) \otimes Y(p_{3}) \otimes Y(p_{4})$ in $\cC$ represented by the following string diagram
  \begin{center}
  \begin{tikzpicture}[out=0, in=180]
    \tikzset{bead/.style={draw=black, fill=white, circle}}
    \coordinate (X1) at (-3,  1.5);
    \coordinate (X2) at (-3,  0.5);
    \coordinate (X3) at (-3, -0.5);
    \coordinate (X4) at (-3, -1.5);
    \coordinate (Y1) at ( 3,  1.5);
    \coordinate (Y2) at ( 3,  0.5);
    \coordinate (Y3) at ( 3, -0.5);
    \coordinate (Y4) at ( 3, -1.5);
    \draw (X1) -- ($(X1)+(1, 0)$) node[pos=0.5, above] {$X{(p_{1})}$} -- ($(X1)+(2, 0)$) node[pos=0.9, bead] {$f_{1}$} to ($(Y1)-(1,0)$) -- (Y1) node[pos=0.5, above] {${Y{(p_{1})}}$};
    \draw (X2) -- ($(X2)+(1, 0)$) node[pos=0.5, above] {$X{(p_{2})}$} -- ($(X2)+(2, 0)$) node[pos=0.9, bead] {$f_{2}$} to ($(Y3)-(1,0)$) -- (Y3) node[pos=0.5, above] {${Y{(p_{3})}}$};
    \draw (X3) -- ($(X3)+(1, 0)$) node[pos=0.5, above] {$X{(p_{3})}$} -- ($(X3)+(2, 0)$) node[pos=0.9, bead] {$f_{3}$} to ($(Y4)-(1,0)$) -- (Y4) node[pos=0.5, above] {${Y{(p_{4})}}$};
    \draw (X4) -- ($(X4)+(1, 0)$) node[pos=0.5, above] {$X{(p_{4})}$} -- ($(X4)+(2, 0)$) node[pos=0.9, bead] {$f_{4}$} to ($(Y2)-(1,0)$) -- (Y2) node[pos=0.5, above] {${Y{(p_{2})}}$};
  \end{tikzpicture}
  \end{center}
  One can think of $A_{\otimes}$ as an ``unbiased'' form of both the the monoidal product and the symmetries for a symmetric monoidal category .

  Similarly, any cartesian/cocartesian strict monoidal category forms an algebra of $\DtryCat((-)^{\op})^{\op}$/$\DtryCat$, respectively.
\end{example}

\begin{example}
  The ``tautological'' example of a $T$-algebra is the free $T$-algebra $\cC = T \cD, \mu_{\cD} \colon T\cC \to \cC$. Note that if $T = \DtryCat_{\cong}$ then $T\cC$ is a strict $T$-algebra, but it is \emph{not} a symmetric strict monoidal category! We have plenty of binary monoidal operations on $\cC$, because there is a functor $\cC \times \cC \to T \cC$ for every two-element directory. However, none of these binary monoidal operations are strictly associative. Instead, we have a different type of associativity, which comes from the square (where $A = \mu_{\cD}$).
\[\begin{tikzcd}
	{T^2\cC} & {T\cC} \\
	{T\cC} & \cC
	\arrow["{TA}", from=1-1, to=1-2]
	\arrow["{\mu_\cC}"', from=1-1, to=2-1]
	\arrow["A", from=1-2, to=2-2]
	\arrow["A"', from=2-1, to=2-2]
\end{tikzcd}\]
  From a mathematical perspective, this associativity is ``just as good'' as strict binary associativity. But while the objects of the free symmetric strict monoidal category on $\cD$ are lists of objects of $\cD$, the objects of $\DtryCat_{\cong}(\cD)$ are directories of objects of $\cD$, so rather than looking up an object by an integer index, we can look up objects by human-intelligible paths. Moreover, when we compose lists, we have to reindex everything, but when we compose directories, everything gets neatly put into subdirectories, so if the index before composition was $\sym{a.b}$, it might now be called $\sym{x.a.b}$, but the $\sym{a.b}$ part stays around.
\end{example}

In order for $\DtryCat_{\cong}$-algebras to be a reasonable replacement for symmetric (strict) monoidal categories, it should be the case that there is also a reasonable replacement for monoidal functors and monoidal natural transformations. But in fact ``monoidal functor'' is underspecified; there are a variety of different types of ``monoidal functor'' based on how we compare $F(X_{1} \otimes \cdots \otimes X_{n})$ with $F(X_{1}) \otimes \cdots \otimes F(X_{n})$. If they are equal, we have a ``strict monoidal functor'', if they are isomorphic, we have a ``weak monoidal functor'', if we have a morphism $F(X_{1}) \otimes \cdots \otimes F(X_{n}) \to F(X_{1} \otimes \cdots \otimes X_{n})$ then we have a ``lax monoidal functor'', and a morphism in the other direction is a ``oplax'' monoidal functor.

All of these have generalizations to $T$-algebras, and this is the motivation for the following definition.

\begin{definition}
  Let $(\cC, A \colon T \cC \to \cC)$ and $(\cD, B \colon T \cD \to \cD)$ be 2-algebras for $T$. Then given a functor $F \colon \cC \to \cD$, we can draw a square
\[\begin{tikzcd}
	{T \cC} & {T \cD} \\
	\cC & \cD
	\arrow["{T F}", from=1-1, to=1-2]
	\arrow["A"', from=1-1, to=2-1]
	\arrow["B", from=1-2, to=2-2]
	\arrow["F"', from=2-1, to=2-2]
\end{tikzcd}\]
  If we were working with a monad on a 1-category, we would require this square to commute, however, because we have a 2-monad in a 2-category, we have four choices on how to fill this square:
\[\begin{tikzcd}
	{T \cC} & {T \cD} & {T \cC} & {T \cD} \\
	\cC & \cD & \cC & \cD \\
	{T \cC} & {T \cD} & T\cC & T\cD \\
	\cC & \cD & \cC & \cD
	\arrow[""{name=0, anchor=center, inner sep=0}, "{T F}", from=1-1, to=1-2]
	\arrow["A"', from=1-1, to=2-1]
	\arrow["B", from=1-2, to=2-2]
	\arrow[""{name=1, anchor=center, inner sep=0}, "TF", from=1-3, to=1-4]
	\arrow["A"', from=1-3, to=2-3]
	\arrow["B", from=1-4, to=2-4]
	\arrow[""{name=2, anchor=center, inner sep=0}, "F"', from=2-1, to=2-2]
	\arrow[""{name=3, anchor=center, inner sep=0}, "F"', from=2-3, to=2-4]
	\arrow[""{name=4, anchor=center, inner sep=0}, "TF", from=3-1, to=3-2]
	\arrow["A"', from=3-1, to=4-1]
	\arrow["B", from=3-2, to=4-2]
	\arrow[""{name=5, anchor=center, inner sep=0}, "TF", from=3-3, to=3-4]
	\arrow["A"', from=3-3, to=4-3]
	\arrow["B", from=3-4, to=4-4]
	\arrow[""{name=6, anchor=center, inner sep=0}, "F"', from=4-1, to=4-2]
	\arrow[""{name=7, anchor=center, inner sep=0}, "F"', from=4-3, to=4-4]
	\arrow["{=}"{description}, draw=none, from=0, to=2]
	\arrow["\cong"{description}, draw=none, from=1, to=3]
	\arrow["{\bar{F}}", shorten <=6pt, shorten >=6pt, Rightarrow, from=4, to=6]
	\arrow["{\bar{F}}"', shorten <=6pt, shorten >=6pt, Rightarrow, from=7, to=5]
\end{tikzcd}\]

From left to right, top to bottom, these correspond to strict, pseudo, lax, and colax $T$-morphisms.

We also have additional coherence conditions which we will state only in the lax case; the other cases are analogous. Associativity for $(F, \bar{F})$ is the condition that the following are equal:
\[\begin{tikzcd}
	{T^2 \cC} & {T^2 \cD} && {T^2\cC} & {T^2 \cD} \\
	T\cC & T\cD & {=} & T\cC & T\cD \\
	\cC & \cD && \cC & \cD
	\arrow[""{name=0, anchor=center, inner sep=0}, "{T^2 F}", from=1-1, to=1-2]
	\arrow["{\mu_{\cC}}"', from=1-1, to=2-1]
	\arrow["{\mu_{\cD}}", from=1-2, to=2-2]
	\arrow[""{name=1, anchor=center, inner sep=0}, "{T^2 F}", from=1-4, to=1-5]
	\arrow["{T A}"', from=1-4, to=2-4]
	\arrow["TB", from=1-5, to=2-5]
	\arrow[""{name=2, anchor=center, inner sep=0}, "{T F}"{description}, from=2-1, to=2-2]
	\arrow["A"', from=2-1, to=3-1]
	\arrow["B", from=2-2, to=3-2]
	\arrow[""{name=3, anchor=center, inner sep=0}, "{T F}"{description}, from=2-4, to=2-5]
	\arrow["A"', from=2-4, to=3-4]
	\arrow["B", from=2-5, to=3-5]
	\arrow[""{name=4, anchor=center, inner sep=0}, "F"', from=3-1, to=3-2]
	\arrow[""{name=5, anchor=center, inner sep=0}, "F"', from=3-4, to=3-5]
	\arrow["{\mu_F}", shorten <=6pt, shorten >=6pt, Rightarrow, from=0, to=2]
	\arrow["{T \bar{F}}", shorten <=6pt, shorten >=6pt, Rightarrow, from=1, to=3]
	\arrow["{\bar{F}}", shorten <=6pt, shorten >=6pt, Rightarrow, from=2, to=4]
	\arrow["{\bar{F}}", shorten <=6pt, shorten >=6pt, Rightarrow, from=3, to=5]
\end{tikzcd}\]
Analogously, unitality for $(F, \bar{F})$ is the condition that the following are equal:
\[\begin{tikzcd}
	\cC & \cD && \cC & \cD \\
	T\cC & T\cD & {=} \\
	\cC & \cD && \cC & \cD
	\arrow[""{name=0, anchor=center, inner sep=0}, "F", from=1-1, to=1-2]
	\arrow["{\eta_\cC}"', from=1-1, to=2-1]
	\arrow["{\eta_\cD}", from=1-2, to=2-2]
	\arrow["F", from=1-4, to=1-5]
	\arrow[Rightarrow, no head, from=1-4, to=3-4]
	\arrow[Rightarrow, no head, from=1-5, to=3-5]
	\arrow[""{name=1, anchor=center, inner sep=0}, "TF"{description}, from=2-1, to=2-2]
	\arrow["A"', from=2-1, to=3-1]
	\arrow["B", from=2-2, to=3-2]
	\arrow[""{name=2, anchor=center, inner sep=0}, "F"', from=3-1, to=3-2]
	\arrow["F"', from=3-4, to=3-5]
	\arrow["{\eta_F}", shorten <=6pt, shorten >=6pt, Rightarrow, from=0, to=1]
	\arrow["{\bar{F}}", shorten <=6pt, shorten >=6pt, Rightarrow, from=1, to=2]
\end{tikzcd}\]
\end{definition}

To make sense of the coherence conditions, notice that in, for instance, the associativity coherence condition, the left side of the left diagram is $\mu_{C} \cmp A$ and the left side of the right diagram is $TA \cmp A$. The fact that these are equal is one of the prerequisites for $(\cC, A)$ to be an algebra for $T$. So these are very similar in spirit to associativity and unitality of algebras; in fact it turns out that lax morphisms are algebras for $T$ lifted to $\CatCat^{\to}$, the category where the objects are triples $(\cC, \cD, F \colon \cC \to \cD)$.

Anyways, the point is that when we apply the following definition to $\DtryCat_{\cong}$, we get good definitions for morphisms between the 2-algebras of $\DtryCat_{\cong}$.

Finally, we want an analogue of monoidal natural transformations.

\begin{definition}
  Suppose that $(F,\bar{F})$, $(G, \bar{G})$ are both lax $T$-morphisms from $(\cC, A)$ to $(\cD, B)$. Then a $T$-transformation from $F$ to $G$ consists of a natural transformation $\alpha \colon F \to G$ such that the following diagram commutes:

\[\begin{tikzcd}
	T\cC && T\cD \\
	\cC && \cD \\
	& T\cC && T\cD \\
	& \cC && \cD
	\arrow[""{name=0, anchor=center, inner sep=0}, "TG", from=1-1, to=1-3]
	\arrow["A"', from=1-1, to=2-1]
	\arrow[Rightarrow, no head, from=1-1, to=3-2]
	\arrow["B", from=1-3, to=2-3]
	\arrow[Rightarrow, no head, from=1-3, to=3-4]
	\arrow[""{name=1, anchor=center, inner sep=0}, "G"', from=2-1, to=2-3]
	\arrow[Rightarrow, no head, from=2-1, to=4-2]
	\arrow[Rightarrow, no head, from=2-3, to=4-4]
	\arrow[""{name=2, anchor=center, inner sep=0}, "TF", from=3-2, to=3-4]
	\arrow["A"', from=3-2, to=4-2]
	\arrow["B", from=3-4, to=4-4]
	\arrow[""{name=3, anchor=center, inner sep=0}, "F"', from=4-2, to=4-4]
	\arrow["{\bar{G}}"', shorten <=2pt, shorten >=2pt, Rightarrow, from=0, to=1]
	\arrow["{T \alpha}", shorten <=5pt, shorten >=5pt, Rightarrow, from=0, to=2]
	\arrow["\alpha", shorten <=5pt, shorten >=5pt, Rightarrow, from=1, to=3]
	\arrow["{\bar{F}}", shorten <=2pt, shorten >=2pt, Rightarrow, from=2, to=3]
\end{tikzcd}\]
\end{definition}

Using the above definitions, one can construct a 2-category of 2-algebras of $\DtryCat_{\cong}$, lax $\DtryCat_{\cong}$-morphisms and $\DtryCat_{\cong}$-transformations, which is analogous (and 2-equivalent) to the 2-category of symmetric monoidal categories, lax symmetric monoidal functors and monoidal natural transformations.

%

\hide{
  \begin{hscode}\SaveRestoreHook
\column{B}{@{}>{\hspre}l<{\hspost}@{}}%
\column{3}{@{}>{\hspre}l<{\hspost}@{}}%
\column{E}{@{}>{\hspre}l<{\hspost}@{}}%
\>[3]{}\mathbf{module}\;\Conid{Conclusion}\;\mathbf{where}{}\<[E]%
\ColumnHook
\end{hscode}\resethooks
}

\section{Conclusion}

Directories offer
a straightforward and efficient formalism
as well as
a practical data structure
for hierarchically-organized information.

The ideas presented in this paper
originated from our work on
Exergetic Port-Hamiltonian Systems (EPHS),
a modeling language for
interconnected macroscopic systems
\cite{lohmayer_Exergetic_2025}.
Starting from primitive subsystems,
mechanical, electromagnetic, and thermodynamic systems
can be hierarchically composed
using a graphical syntax
based on undirected wiring diagrams
\cite{2013Spivak}.
%
At the syntactic level,
a system is defined by its interface,
which consists of finitely many ports
through which energy is exchanged.
From a practical perspective,
naming systems and their ports
is essential for explicit reference.
For example,
a name like
\texttt{motor.stator.coil.magnetic\_flux},
indicates that
the \texttt{motor} system has
contains a subsystem called \texttt{stator},
which includes another subsystem \texttt{coil},
that has a port named \texttt{magnetic\_flux}.
%
These considerations have led to
the recognition of
the hierarchical `naming scheme'
used by EPHS
as a monad.

Besides providing a monadic data structure
that is instrumental for
the computer implementation of EPHS
\cite{2025Lohmayer},
directories provide an alternative to
symmetric monoidal categories (SMCs)
and their underlying multicategories.
Unlike SMCs,
directory-multicategories
have the advantage that they are inherently strict:
they use named tuples of subdirectories,
e.g.~ $(\mathtt{a} \mapsto x, \, \mathtt{b} \mapsto y, \, \mathtt{c} \mapsto z)$,
rather than nested binary tuples
(coming from a binary monoidal product),
e.g.~$(x, \, (y, \, z))$ or $((x, \, y), \, z))$,
which are only equivalent up to a coherence isomorphism.

Building on this foundation,
future work is well positioned to
formalize the graphical syntax
and relational semantics of EPHS
using the framework of
directory-multicategories and directory-functors.
This approach combines
mathematical rigor
with
practical concerns,
helping bridge the gap between theory and implementation.

\section*{Author contribution statement}%

\textbf{Owen Lynch}: Conceptualization, Investigation, Writing -- Original Draft, Writing -- Review \& Editing, Visualization;
\textbf{Markus Lohmayer}: Investigation, Writing -- Original Draft;


\bibliographystyle{link-elsarticle-num}
\addcontentsline{toc}{section}{References}
\bibliography{directories}

\appendix

\hide{
\begin{hscode}\SaveRestoreHook
\column{B}{@{}>{\hspre}l<{\hspost}@{}}%
\column{E}{@{}>{\hspre}l<{\hspost}@{}}%
\>[B]{}\mathbf{module}\;\Conid{FreeMonadDistrib}\;\mathbf{where}{}\<[E]%
\ColumnHook
\end{hscode}\resethooks
}

\section{Free Monads and Distributive Laws}

In this appendix, we complete the proof that there is a distributive law between $\freemonad_{\NERecord}$ and $\Maybe$.

One way to do this would be to translate the Haskell definition of \ensuremath{\Varid{distrib}} into math, and then check that four diagrams commute. However, there is a cleaner way which requires only that we translate \ensuremath{\Varid{filterNothings}} into math and check that two diagrams commute. We will prove this via some general categorical machinery for free monads. It may seem like this is a very long proof, but in fact once one understands some general facts about free monads and distributive laws, the proof is quite short. We simply take our time reviewing those general facts for the reader.

While it \emph{seems} fairly intuitive that $\Maybe \circ \freemonad_{\NERecord}$ is a monad, category theorists have had many experiences where things we hoped to be distributive laws were not in fact distributive laws, so it is worth being quite careful about this.

\begin{definition}
  If $F$ is an endofunctor on $\cC$, a \textbf{functor algebra} for $F$ is an object $X \in \cC$ along with a morphism $a \colon F(X) \to X$. We call $X$ the \textbf{carrier} and $a$ the \textbf{algebra structure on $X$}. A morphism of functor algebras from $(X,a)$ to $(Y, b)$ is a morphism $f \colon X \to Y$ such that the following square commutes.
  \[\begin{tikzcd}
    {F(X)} & {F(Y)} \\
    X & Y
    \arrow["{F(f)}", from=1-1, to=1-2]
    \arrow["a"', from=1-1, to=2-1]
    \arrow["b", from=1-2, to=2-2]
    \arrow["f"', from=2-1, to=2-2]
  \end{tikzcd}\]
  Let $\FAlg(F)$ be the category of functor algebras and functor algebra morphisms for $F$.
\end{definition}

\begin{definition}
  A monad algebra for a monad $T \colon \cC \to \cC$ consists of a functor algebra $(X, a)$ for $T$ such that the following diagrams commute
\[\begin{tikzcd}
	{T(T(X))} & {T(X)} \\
	{T(X)} & X
	\arrow["{\mu^T_X}", from=1-1, to=1-2]
	\arrow["{T(a)}"', from=1-1, to=2-1]
	\arrow["a", from=1-2, to=2-2]
	\arrow["a"', from=2-1, to=2-2]
\end{tikzcd}\]
\[\begin{tikzcd}
	& {T(X)} \\
	X && X
	\arrow["a", from=1-2, to=2-3]
	\arrow["{\eta_X^T}", from=2-1, to=1-2]
	\arrow[Rightarrow, no head, from=2-1, to=2-3]
\end{tikzcd}\]
  Let $\MAlg(T)$ be the full subcategory of $\FAlg(T)$ on monad algebras. $\MAlg(T)$ is also called the \textbf{Eilenberg-Moore} category for $T$ and written as $\cC^T$.
\end{definition}

Earlier we defined a free polynomial monad to be the result of applying the left adjoint to the forgetful functor from $\PolyMon$ to $\Poly$. If $\cC$ is an arbitrary category, $\Endo(\cC)$ is the category of endomorphisms on $\cC$, and $\Mon(\cC)$ is the category of monads on $\cC$, then there is in general not a left adjoint to the forgetful functor $\Mon(\cC) \to \Endo(\cC)$. However, there is still reasonable notion of ``the free monad'' on an endofunctor $F \colon \cC \to \cC$.

\begin{definition}
  For an endofunctor $F$, there is a forgetful functor $\Carrier_F \colon \FAlg(F) \to \cC$ which sends an algebra $(X,a)$ to its carrier $X$. If $\Carrier_F$ has a left adjoint, then we call that left adjoint $\Free_F \colon \cC \to \FAlg(F)$, and the compositte $\freemonad_F := \Carrier_F \circ \Free_F$ we call the \textbf{free monad} on $F$.
\end{definition}

\begin{lemma} \label{lemma:algebraically-free}
  If $P$ is a functor such that there is a free monad on $P$, and $\cC$ is complete and locally small, then $\FAlg(P)$ is isomorphic to $\MAlg(\freemonad_P)$.
\end{lemma}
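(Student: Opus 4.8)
The plan is to recognize the claimed isomorphism as the Eilenberg--Moore comparison functor attached to the adjunction that defines the free monad, and then to verify it is invertible by writing down an explicit inverse. Since a free monad on $P$ exists, by definition we have an adjunction $\Free_P \dashv \Carrier_P$ with $\Carrier_P \colon \FAlg(P) \to \cC$, and its induced monad is exactly $\freemonad_P = \Carrier_P \circ \Free_P$; write $\eta, \mu$ for the unit and multiplication of $\freemonad_P$ and $\epsilon$ for the counit of the adjunction. The free $P$-algebra on $X$ has the form $\Free_P X = (\freemonad_P X, \tau_X)$ for a structure map $\tau_X \colon P(\freemonad_P X) \to \freemonad_P X$, and there is a natural transformation $\iota \colon P \Rightarrow \freemonad_P$ with components $\iota_X := \tau_X \circ P\eta_X$. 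Any adjunction furnishes a comparison functor $K \colon \FAlg(P) \to \MAlg(\freemonad_P)$ living over $\cC$, which sends $(X,a)$ to the Eilenberg--Moore algebra $(X, \Carrier_P\epsilon_{(X,a)})$ and is the identity on underlying morphisms; here $\Carrier_P\epsilon_{(X,a)} \colon \freemonad_P X \to X$ is the unique $P$-algebra map extending $\id_X$ along $\eta_X$.

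First I would define a candidate inverse $L \colon \MAlg(\freemonad_P) \to \FAlg(P)$ by $L(X,h) = (X,\, h\circ\iota_X)$, also identity on underlying maps. The composite $L\circ K = \id$ is the easy direction: writing $h_a := \Carrier_P\epsilon_{(X,a)}$, one computes $h_a\circ\iota_X = h_a\circ\tau_X\circ P\eta_X = a\circ P h_a\circ P\eta_X = a\circ P(h_a\circ\eta_X) = a$, using that $h_a$ is a $P$-algebra map and satisfies $h_a\circ\eta_X = \id_X$. For $K\circ L = \id$ I would appeal to the universal property of the free algebra: it suffices to show that a given Eilenberg--Moore structure $h$ is itself the unique $P$-algebra extension of $\id_X$ for the algebra $(X, h\circ\iota_X)$, i.e.\ that $h$ is a $P$-algebra map $(\freemonad_P X, \tau_X) \to (X, h\circ\iota_X)$ with $h\circ\eta_X = \id_X$.

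The unit law of the Eilenberg--Moore algebra gives $h\circ\eta_X = \id_X$ directly, and the $P$-algebra condition $h\circ\tau_X = (h\circ\iota_X)\circ Ph$ reduces, via the identity $\tau_X = \mu_X\circ\iota_{\freemonad_P X}$ together with naturality of $\iota$, to the Eilenberg--Moore multiplication law $h\circ\mu_X = h\circ\freemonad_P(h)$. The very same two ingredients show that an underlying map is a $P$-algebra morphism for a pair of corresponding structures if and only if it is a $\freemonad_P$-algebra morphism, so $K$ and $L$ are mutually inverse bijections on hom-sets as well; since both are identity on underlying data, this yields an isomorphism of categories over $\cC$.

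The main obstacle is establishing the object-level identity $\tau_X = \mu_X\circ\iota_{\freemonad_P X}$, which relates the free $P$-algebra structure, the monad multiplication, and the one-layer inclusion; once it is in place the Eilenberg--Moore laws translate exactly into the functor-algebra conditions. (In the Haskell picture of the excerpt this is just the computation that flattening a single $P$-layer of \texttt{Pure}-wrapped values is \texttt{Join}, using \texttt{join\,.\,Pure\,=\,id}.) A slicker but less self-contained route would invoke Beck's strict monadicity theorem: every $\Carrier_P$-split coequalizer is absolute, hence preserved by $P$ and strictly created by $\Carrier_P$, so $\Carrier_P$ is strictly monadic and $\FAlg(P) \cong \MAlg(\freemonad_P)$. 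The completeness and local-smallness hypotheses are exactly what make this standard machinery applicable, although the explicit inverse constructed above does not, in fact, require them.
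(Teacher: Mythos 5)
Your proposal is correct and takes essentially the same approach as the paper: the two constructions the paper sketches are exactly your $K$ (sending a $P$-algebra $(X,a)$ to the adjunct $\Carrier_P\epsilon_{(X,a)}$ of $\id_X$) and your $L$ (precomposing a monad-algebra structure with $\iota_X = \tau_X \circ P\eta_X$), with the paper delegating to the nlab the verification that these are mutually inverse, which you carry out in full via the key identity $\tau_X = \mu_X \circ \iota_{\freemonad_P X}$. Your closing observation is also accurate: since the paper defines the free monad via the adjunction $\Free_P \dashv \Carrier_P$, your explicit inverse makes the completeness and local-smallness hypotheses unnecessary (they matter only for reconciling this with the other notion of free monad, as left adjoint to the forgetful functor from monads to endofunctors).
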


\begin{proof}
  This can be found on the nlab \cite{nlabauthors_free_2024}, where the condition that $\FAlg(P)$ is isomorphic to $\MAlg(\freemonad_P)$ is termed ``algebraically-free.'' We will not recount this proof in full detail; rather we will just explain the construction going in either direction. First, if $a \colon \freemonad_P X \to X$ is a monad algebra of $\freemonad_P$, then we can make a $P$-algebra with $X$ as carrier via
  \[ PX \xrightarrow{P \eta_X} P \freemonad_P X \to \freemonad_P X \xrightarrow{a} X \]
  where the middle map is given by the fact that $\freemonad_P X$ is the free functor algebra for $P$ on $X$.

  In the other direction, given a functor algebra $b \colon P Y \to Y$, there is a natural map $\freemonad_P Y \to Y$ given by the arrow marked with $\ast$ in the following diagram that depicts the $\Free_P$/$\Carrier_P$ adjunction applied to the identity map $Y \to Y$ in $\cC$.
\[\begin{tikzcd}
	{P \freemonad_P Y} & {P Y} \\
	{\freemonad_P Y} & Y \\
	Y & Y
	\arrow[Rightarrow, no head, from=3-1, to=3-2]
	\arrow["{\Free_P}", maps to, from=3-1, to=2-1]
	\arrow["{\Carrier_P}", maps to, from=2-2, to=3-2]
	\arrow["\ast"', from=2-1, to=2-2]
	\arrow[from=1-1, to=2-1]
	\arrow["b", from=1-2, to=2-2]
	\arrow[from=1-1, to=1-2]
\end{tikzcd}\]
\end{proof}

We can now state the main theorem that we will use to show that there is a distributive law between $\freemonad_{\NERecord}$ and $\Maybe$.

\begin{theorem} \label{thm:half-distributive}
Suppose that $\cC$ is a complete and locally small category, $P$ is an endofunctor on $\cC$ with $\freemonad_P$ the free monad on $P$, and $(T,\eta^T,\mu^T)$ is any other monad. Also suppose that there is a natural transformation $\sigma \colon PT \to TP$ such that the following diagrams commute.
\[\begin{tikzcd}
	& {PT^2} \\
	TPT && PT \\
	{T^2P} && TP
	\arrow["{\sigma P}"', from=1-2, to=2-1]
	\arrow["{P\mu^T}", from=1-2, to=2-3]
	\arrow["P\sigma"', from=2-1, to=3-1]
	\arrow["\sigma", from=2-3, to=3-3]
	\arrow["{\mu^TP}"', from=3-1, to=3-3]
\end{tikzcd}\]
\[\begin{tikzcd}
	& P \\
	PT && TP
	\arrow["{P\eta^T}"', from=1-2, to=2-1]
	\arrow["{\eta^TP}", from=1-2, to=2-3]
	\arrow["\sigma"', from=2-1, to=2-3]
\end{tikzcd}\]
Then there is a distributive law $\freemonad_P T \to T \freemonad_P$.
\end{theorem}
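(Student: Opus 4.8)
The plan is to avoid constructing $\lambda$ by brute force and instead to reduce everything to two facts already in hand: that $\freemonad_P$ is algebraically-free (\cref{lemma:algebraically-free}), and Beck's correspondence between distributive laws and liftings of monads to Eilenberg--Moore categories \cite{beck_Distributive_1969}. The idea is to promote $\sigma$ to a lifting of the \emph{monad} $T$, first to $\FAlg(P)$ and then across the algebraically-free isomorphism to $\cC^{\freemonad_P}$; Beck's theorem then returns the distributive law with no further computation.

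First I would use $\sigma$ to lift $T$ to a monad on $\FAlg(P)$. On a functor algebra $(X, a \colon PX \to X)$, set
\[ \hat{T}(X,a) = \Big( TX,\ P(TX) \xrightarrow{\sigma_X} T(PX) \xrightarrow{Ta} TX \Big), \]
and $\hat{T}(f) = Tf$ on morphisms; naturality of $\sigma$ makes this a functor lying strictly over $\cC$, i.e. $\Carrier_P \circ \hat{T} = T \circ \Carrier_P$. The two hypothesized coherence squares for $\sigma$ are then exactly what is needed to make this a lifting of $T$ \emph{as a monad}: the unit triangle (involving $P\eta^T$ and $\eta^T P$) says precisely that each $\eta^T_X$ is a $P$-algebra morphism $(X,a) \to \hat{T}(X,a)$, and the multiplication square relating $\sigma$ to $\mu^T$ unwinds, via naturality of $\mu^T$, into the statement that each $\mu^T_X$ is a $P$-algebra morphism $\hat{T}\hat{T}(X,a) \to \hat{T}(X,a)$. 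Thus $(\hat{T}, \eta^T, \mu^T)$ is a monad on $\FAlg(P)$ over $(T,\eta^T,\mu^T)$.

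Next I would transport this lifting across the isomorphism $\FAlg(P) \cong \MAlg(\freemonad_P) = \cC^{\freemonad_P}$ of \cref{lemma:algebraically-free}. Since that isomorphism preserves carriers, it identifies $\Carrier_P$ with the Eilenberg--Moore forgetful functor, so $\hat{T}$ becomes a lifting $\tilde{T}$ of the monad $T$ to the Eilenberg--Moore category $\cC^{\freemonad_P}$ of the monad $\freemonad_P$. By Beck's theorem, a lifting of a monad $T$ to the Eilenberg--Moore category of a monad $S$ is the same datum as a distributive law $S T \to T S$; taking $S = \freemonad_P$ yields the desired $\freemonad_P T \to T \freemonad_P$, and this is the direction that makes $T\freemonad_P$ (e.g. $\Maybe \circ \freemonad_{\NERecord}$) the composite monad, as required.

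The hard part is bookkeeping rather than mathematics: one must verify that the two diagrams assumed for $\sigma$ really do translate into the unit and associativity conditions for $\hat{T}$ to be a monad lifting (not merely a functor lifting), and one must keep the variance in Beck's correspondence straight so the law emerges in the orientation $\freemonad_P T \to T\freemonad_P$. If one would rather not invoke Beck's theorem as a black box, the same map can be produced explicitly: $T\freemonad_P X$ carries the $P$-algebra structure given by $\tilde{T}$ applied to the free algebra on $X$, and the universal property of $\freemonad_P(TX)$ as the free $P$-algebra on $TX$ supplies the unique $P$-algebra morphism $\lambda_X \colon \freemonad_P(TX) \to T\freemonad_P X$ extending $T\eta^{\freemonad_P}_X$; naturality and the four distributive-law axioms then follow from the corresponding universal properties.
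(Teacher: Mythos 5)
Your proposal is correct and follows essentially the same route as the paper: lift $T$ along $\sigma$ to a monad on $\FAlg(P)$ (the paper's \cref{lemma:lift-to-falg}, where the unit triangle and multiplication pentagon for $\sigma$ combine with naturality exactly as you describe), transport across the algebraically-free isomorphism $\FAlg(P) \cong \MAlg(\freemonad_P)$ (\cref{lemma:algebraically-free}), and conclude via the Beck correspondence between liftings and distributive laws (\cref{lemma:distributive-laws-and-lifts}), with the variance matching the paper's statement. Your closing remark on constructing $\lambda_X$ explicitly from the universal property of the free $P$-algebra is a fine supplement but not needed.
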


To prove this, we start with an equivalent characterization of distributive laws.

\begin{definition}
  If $(T_1, \eta^{T_1}, \mu^{T_1})$ and $(T_2, \eta^{T_2}, \mu^{T_2})$ are both monads on a category $\cC$, a \textbf{lifting} of $T_2$ to $\MAlg(T_1)$ is a monad $(T_2^\ast, \eta^{T_2^\ast}, \mu^{T_2^\ast})$ on $\MAlg(T_1)$ that restricts to $(T_2, \eta^{T_2}, \mu^{T_2})$ on carriers.
\end{definition}

\begin{lemma} \label{lemma:distributive-laws-and-lifts}
  Distributive laws $T_1 T_2 \to T_2 T_1$ are in natural bijection with liftings of $T_2$ to $\MAlg(T_1)$.
\end{lemma}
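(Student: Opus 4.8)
The plan is to reprove the classical theorem of Beck \cite{beck_Distributive_1969} by writing down explicit constructions in both directions and checking that they are mutually inverse; throughout, $\circ$ denotes ordinary (right-to-left) composition. Write $U \colon \MAlg(T_1) \to \cC$ for the forgetful functor, which sends a $T_1$-algebra $(X, a \colon T_1 X \to X)$ to $X$; a \emph{lifting} of $T_2$ is a monad $(\tilde T_2, \tilde\eta, \tilde\mu)$ on $\MAlg(T_1)$ with $U \tilde T_2 = T_2 U$ and with $\tilde\eta$, $\tilde\mu$ lying strictly over $\eta^{T_2} U$, $\mu^{T_2} U$. Recall that a distributive law is a natural transformation $\lambda \colon T_1 T_2 \Rightarrow T_2 T_1$ subject to four Beck axioms: two expressing compatibility with the unit and multiplication of $T_1$, and two expressing compatibility with the unit and multiplication of $T_2$.

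Given a distributive law $\lambda$, I would build a lifting by setting $\tilde T_2(X,a) = (T_2 X, \bar a)$, where $\bar a$ is the composite $T_1 T_2 X \xrightarrow{\lambda_X} T_2 T_1 X \xrightarrow{T_2 a} T_2 X$, and $\tilde T_2(f) = T_2 f$ on morphisms. The organising principle of the whole proof is that the four Beck axioms match the four things one must check, one apiece: the $T_1$-unit axiom gives the unit law for the algebra $\bar a$; the $T_1$-multiplication axiom gives its associativity (so $\tilde T_2$ is a well-defined functor, with $\tilde T_2 f = T_2 f$ an algebra map by naturality of $\lambda$); the $T_2$-unit axiom says $\tilde\eta_{(X,a)} := \eta^{T_2}_X$ is a morphism of $T_1$-algebras; and the $T_2$-multiplication axiom says $\tilde\mu_{(X,a)} := \mu^{T_2}_X$ is one. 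Hence $(\tilde T_2, \tilde\eta, \tilde\mu)$ is a monad lying over $(T_2, \eta^{T_2}, \mu^{T_2})$, i.e.\ a lifting.

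Conversely, given a lifting $\tilde T_2$, I would recover $\lambda$ from its action on free algebras. Since $U \tilde T_2 = T_2 U$, applying $\tilde T_2$ to the free algebra $(T_1 X, \mu^{T_1}_X)$ yields an algebra with carrier $T_2 T_1 X$ and a structure map $\theta_X \colon T_1 T_2 T_1 X \to T_2 T_1 X$ natural in $X$; I then set $\lambda_X$ to be $T_1 T_2 X \xrightarrow{T_1 T_2 \eta^{T_1}_X} T_1 T_2 T_1 X \xrightarrow{\theta_X} T_2 T_1 X$. Verifying that this $\lambda$ satisfies the four axioms — using naturality of $\theta$, that $\tilde T_2$ restricts to $T_2$ on carriers, and that $\tilde\eta, \tilde\mu$ sit over $\eta^{T_2}, \mu^{T_2}$ — is the most laborious part, and is where I expect the real work to lie.

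Finally I would check the two passages are mutually inverse. The round trip starting from a law is a one-line chase: the recovered law is $\theta_X \circ T_1 T_2 \eta^{T_1}_X$ with $\theta_X = T_2 \mu^{T_1}_X \circ \lambda_{T_1 X}$, which unwinds, via naturality of $\lambda$ at $\eta^{T_1}_X$ and the unit law $\mu^{T_1}_X \circ T_1 \eta^{T_1}_X = \id$, back to $\lambda_X$. For the round trip starting from a lifting, I must show the rebuilt functor agrees with $\tilde T_2$ on every algebra $(X,a)$, not merely the free ones; the trick is that $\tilde T_2$ applied to the algebra map $a \colon (T_1 X, \mu^{T_1}_X) \to (X,a)$ is itself an algebra map, giving $a^{\sharp} \circ T_1 T_2 a = T_2 a \circ \theta_X$, where $a^{\sharp} \colon T_1 T_2 X \to T_2 X$ is the structure map of $\tilde T_2(X,a)$; precomposing with $T_1 T_2 \eta^{T_1}_X$ and using $a \circ \eta^{T_1}_X = \id_X$ collapses this to $a^{\sharp} = T_2 a \circ \lambda_X$, exactly the structure map the first construction assigns, while the units and multiplications agree automatically since both lie over $\eta^{T_2}, \mu^{T_2}$. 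Naturality of the resulting bijection is then immediate, as every step above is given by a choice-free formula in the data $(\cC, T_1, T_2)$.
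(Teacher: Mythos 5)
Your proposal is correct, but it takes a genuinely different route from the paper for the simple reason that the paper does not prove this lemma at all: its entire proof is the citation ``This is exercise 9.2 in \cite{barr_Toposes_2005}'', whereas you reprove Beck's classical theorem from scratch. Your constructions are the standard ones and the bookkeeping checks out: the four Beck axioms do line up one apiece with the four verifications in the law-to-lifting direction exactly as you say (the $T_1$-unit axiom gives the unit law for $\bar a$; the $T_1$-multiplication axiom, together with naturality of $\lambda$ and associativity of $a$, gives associativity of $\bar a$; the $T_2$-unit and $T_2$-multiplication axioms make $\eta^{T_2}_X$ and $\mu^{T_2}_X$ algebra maps), and both round-trip computations are right --- in particular, applying the lifting to the algebra map $a \colon (T_1 X, \mu^{T_1}_X) \to (X,a)$ and precomposing with $T_1 T_2 \eta^{T_1}_X$ is precisely the device needed to see that the rebuilt functor agrees with $\tilde{T}_2$ on \emph{all} algebras rather than just free ones. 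The one place you defer detail --- verifying the four axioms for the $\lambda$ recovered from a lifting --- is indeed the longest stretch, and the ingredients you name essentially suffice, though you will also want functoriality of $\tilde{T}_2$ on non-identity algebra maps such as $\mu^{T_1}_X \colon (T_1^2 X, \mu^{T_1}_{T_1 X}) \to (T_1 X, \mu^{T_1}_X)$ (for the $T_1$-multiplication axiom) and the algebra axioms of $\theta_X$ itself (for the $T_1$-unit axiom). What your route buys is self-containedness, making the appendix independent of the literature; what the paper's route buys is brevity, since for the authors this is a textbook fact whose proof would be a digression. Your closing remark on naturality of the bijection is the thinnest step, but since the paper leaves ``natural'' unglossed as well, appealing to the constructions being given by choice-free formulas matches the intended level of rigor.
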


\begin{proof}
  This is exercise 9.2 in \cite{barr_Toposes_2005}.
\end{proof}

\begin{lemma} \label{lemma:lift-to-falg}
  $T$ lifts to a monad on the category of functor algebras for $P$.
\end{lemma}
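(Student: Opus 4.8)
The plan is to construct the lifting $T^\ast \colon \FAlg(P) \to \FAlg(P)$ completely explicitly, and then to observe that almost all of the required coherence is inherited from $T$ because the forgetful functor $\Carrier_P \colon \FAlg(P) \to \cC$ is faithful. Using the natural transformation $\sigma \colon PT \to TP$ in scope (with the two coherence diagrams from \cref{thm:half-distributive}), I would define $T^\ast$ on objects by
\[ T^\ast(X,\, a \colon PX \to X) = (TX,\; T(a) \circ \sigma_X \colon PTX \to TPX \to TX), \]
and on morphisms by $T^\ast(f) = T(f)$. Since functor algebras carry no axioms, $T^\ast(X,a)$ is automatically a functor algebra, and the only thing to check on morphisms is that $T(f)$ is again a $P$-algebra map, which is an immediate chase from naturality of $\sigma$ together with functoriality of $T$. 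Functoriality of $T^\ast$ (preservation of identities and composites) then follows because its underlying maps are those of $T$.

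Next I would equip $T^\ast$ with unit and multiplication whose carrier components are exactly those of $T$, namely $\eta^{T^\ast}_{(X,a)} = \eta^T_X$ and $\mu^{T^\ast}_{(X,a)} = \mu^T_X$. The genuine content of the proof is verifying that these components are $P$-algebra morphisms. For the unit I must check $\eta^T_X \circ a = (T(a)\circ \sigma_X)\circ P(\eta^T_X)$; using the unit coherence $\sigma_X \circ P(\eta^T_X) = \eta^T_{PX}$, the right-hand side becomes $T(a)\circ \eta^T_{PX}$, which equals $\eta^T_X \circ a$ by naturality of $\eta^T$ applied to $a$. For the multiplication, the algebra structure on $T^\ast T^\ast(X,a)$ works out to $T^2(a)\circ T(\sigma_X)\circ \sigma_{TX}$, and I must check $\mu^T_X \circ \bigl(T^2(a)\circ T(\sigma_X)\circ \sigma_{TX}\bigr) = (T(a)\circ \sigma_X)\circ P(\mu^T_X)$. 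Rewriting the right-hand side with the multiplication (hexagon) coherence $\sigma_X \circ P(\mu^T_X) = \mu^T_{PX}\circ T(\sigma_X)\circ \sigma_{TX}$ and then applying naturality of $\mu^T$ to $a$ turns it into the left-hand side.

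Finally, naturality of $\eta^{T^\ast}$ and $\mu^{T^\ast}$ as transformations on $\FAlg(P)$, together with the monad associativity and unit laws for $T^\ast$, come for free: the composites appearing in each such equation are algebra morphisms whose underlying maps in $\cC$ are the corresponding (natural, monad-law-satisfying) data of $T$, and since $\Carrier_P$ is faithful and sends $(T^\ast, \eta^{T^\ast}, \mu^{T^\ast})$ to $(T, \eta^T, \mu^T)$, each equation holds in $\FAlg(P)$ because it holds after applying $\Carrier_P$. By construction $T^\ast$ restricts to $T$ on carriers, so it is a lifting in the required sense. The only steps with real mathematical content are the two diagram chases for the unit and multiplication components; of these the multiplication check is the main obstacle, chiefly because one must track the whiskering directions in the coherence hexagon correctly, but it is otherwise routine.
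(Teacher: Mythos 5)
Your proposal is correct and takes essentially the same route as the paper's proof: you define the lifted algebra structure as $T(a) \circ \sigma_X$, check that $T(f)$ remains an algebra morphism via naturality of $\sigma$ and functoriality of $T$, and verify that the components of $\eta^T$ and $\mu^T$ are $P$-algebra morphisms using exactly the triangle and pentagon hypotheses (your ``hexagon'' is the paper's pentagon) combined with naturality of $\eta^T$ and $\mu^T$. Your closing observation that the monad laws and naturality of $\eta^{T^\ast}$, $\mu^{T^\ast}$ transfer along the faithful forgetful functor $\Carrier_P$ merely makes explicit what the paper leaves implicit, and is a welcome addition.
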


\begin{proof}
  First note that if $a \colon PX \to X$ is a functor algebra, then $PTX \xrightarrow{\sigma_X} TPX \xrightarrow{Ta} TX$ is also a functor algebra, because there are no conditions that we have to check. Moreover, if we have a morphism
\[\begin{tikzcd}
	PX & PY \\
	X & Y
	\arrow["Pf", from=1-1, to=1-2]
	\arrow["a"', from=1-1, to=2-1]
	\arrow["b", from=1-2, to=2-2]
	\arrow["f"', from=2-1, to=2-2]
\end{tikzcd}\]
  then the following commutes by naturality of $\sigma$ and functoriality of $T$.
\[\begin{tikzcd}
	PTX & PTY \\
	TPX & TPY \\
	TX & TY
	\arrow["{\sigma_X}"', from=1-1, to=2-1]
	\arrow["{\sigma_Y}", from=1-2, to=2-2]
	\arrow["{PT f}", from=1-1, to=1-2]
	\arrow["{TP f}", from=2-1, to=2-2]
	\arrow["Tf"', from=3-1, to=3-2]
	\arrow["Ta"', from=2-1, to=3-1]
	\arrow["Tb", from=2-2, to=3-2]
\end{tikzcd}\]
  Therefore, $a \mapsto \sigma \cmp Ta$ is functorial. It remains to show that $\eta^T$ and $\mu^T$ can be lifted to $\FAlg(P)$.

  For $\eta^T$, we must show that the following diagram commutes.
\[\begin{tikzcd}
	PX & PTX \\
	& TPX \\
	X & TX
	\arrow["Ta", from=2-2, to=3-2]
	\arrow["{\eta^T_X}"', from=3-1, to=3-2]
	\arrow["a"', from=1-1, to=3-1]
	\arrow["{\sigma_X}", from=1-2, to=2-2]
	\arrow["{P \eta_X^T}", from=1-1, to=1-2]
\end{tikzcd}\]
  This can be done by breaking it up in the following way
\[\begin{tikzcd}
	PX & PTX \\
	& TPX \\
	X & TX
	\arrow["Ta", from=2-2, to=3-2]
	\arrow["{\eta^T_X}"', from=3-1, to=3-2]
	\arrow["a"', from=1-1, to=3-1]
	\arrow["{\sigma_X}", from=1-2, to=2-2]
	\arrow["{P \eta_X^T}", from=1-1, to=1-2]
	\arrow["{\eta^T_{PX}}"', from=1-1, to=2-2]
\end{tikzcd}\]
The bottom square commutes by naturality of $\eta^T$, and the top triangle by the triangle-shaped condition in our assumptions.

  For $\mu^T$, we similarly show that the following diagram commutes, by naturality in the bottom square and then by our pentagon-shaped assumption for the top pentagon.
\[\begin{tikzcd}
	PTTX && PTX \\
	TPTX && TPX \\
	TTPX \\
	TTX && TX
	\arrow["Ta", from=2-3, to=4-3]
	\arrow["{P\mu_X^T}", from=1-1, to=1-3]
	\arrow["{\sigma_X}", from=1-3, to=2-3]
	\arrow["{\sigma_{TX}}"', from=1-1, to=2-1]
	\arrow["TTa"', from=3-1, to=4-1]
	\arrow["{\mu_X}"', from=4-1, to=4-3]
	\arrow["{T\sigma_X}"', from=2-1, to=3-1]
	\arrow["{\mu_{PX}}", from=3-1, to=2-3]
\end{tikzcd}\]

  We are done.
\end{proof}

The statement of \cref{thm:half-distributive} is a direct consequence of \cref{lemma:algebraically-free}, \cref{lemma:distributive-laws-and-lifts}, and \cref{lemma:lift-to-falg}, by the following argument.

\begin{enumerate}
  \item If we lift $T$ to a monad on $\MAlg(\freemonad_P)$, we get the distributive law we want (\cref{lemma:distributive-laws-and-lifts}).
  \item The category $\MAlg(\freemonad_P)$ is isomorphic to the category $\FAlg(P)$ (\cref{lemma:algebraically-free}).
  \item We can lift $T$ to a monad on $\FAlg(P)$ (\cref{lemma:lift-to-falg}).
\end{enumerate}

We now prove that the conditions of \cref{thm:half-distributive} hold for $P = \NERecord$ and $T = \Maybe$, with $\sigma = \mathit{filterNothings}$.

\begin{theorem} \label{thm:nerecord-maybe-distributive-law}
There is a distributive law between $\freemonad_{\NERecord}$ and $\Maybe$.
\end{theorem}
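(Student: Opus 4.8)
The plan is to apply \cref{thm:half-distributive} directly, with $P = \NERecord$, $T = \Maybe$, and $\sigma = \filterNothings \colon \NERecord \circ \Maybe \to \Maybe \circ \NERecord$. Since $\Set$ is complete and locally small, $\Maybe$ is a monad, and $\NERecord$ is a polynomial functor with finite directions (so the free monad $\freemonad_{\NERecord}$ exists by the construction given earlier), the only remaining obligations are to confirm that $\filterNothings$ is a natural transformation of the stated type $PT \to TP$ and that it satisfies the triangle and pentagon coherence conditions of the theorem. Once these are checked, the theorem outputs exactly a distributive law $\freemonad_{\NERecord} \circ \Maybe \to \Maybe \circ \freemonad_{\NERecord}$, which is what we want.

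First I would record naturality. For $g \colon a \to a'$, the action $\Maybe(g)$ sends $\mathrm{Nothing} \mapsto \mathrm{Nothing}$ and $\mathrm{Just}(x) \mapsto \mathrm{Just}(g(x))$, so relabelling the values of a non-empty record by $g$ neither changes which positions the filter discards nor disturbs the surviving entries beyond applying $g$ to them; hence the naturality square commutes. The triangle condition is then immediate: feeding a non-empty record $r$ through $P\eta^T$ wraps every value in $\mathrm{Just}$, so the subsequent $\filterNothings$ discards nothing and returns $\mathrm{Just}(r)$, which is precisely $\eta^T_{P}(r)$.

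The real content, and the step I expect to be the main (albeit routine) obstacle, is the pentagon. Here one starts from a non-empty record whose values lie in $\Maybe(\Maybe(a))$, so each value has one of three shapes: $\mathrm{Just}(\mathrm{Just}(x))$, $\mathrm{Just}(\mathrm{Nothing})$, or $\mathrm{Nothing}$. I would partition the index set $U$ into the corresponding blocks $U_{JJ}$, $U_{JN}$, $U_N$ and trace both legs. The leg through $P\mu^T$ first collapses the nested $\Maybe$ (sending both $U_{JN}$ and $U_N$ to $\mathrm{Nothing}$) and then filters, keeping exactly $U_{JJ}$; the leg through $\sigma T$, then $T\sigma$, then $\mu^T P$ first removes $U_N$ via the outer filter, then removes $U_{JN}$ via the inner filter, with the final $\mu^T$ reconciling the two $\Maybe$-layers. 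In every case both legs retain precisely the entries of $U_{JJ}$ with their underlying values $x_u$, and both collapse to $\mathrm{Nothing}$ exactly when $U_{JJ} = \emptyset$. This finite case analysis establishes the pentagon.

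With naturality, the triangle, and the pentagon all verified, \cref{thm:half-distributive} applies and yields the desired distributive law between $\freemonad_{\NERecord}$ and $\Maybe$.
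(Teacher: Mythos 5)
Your proposal is correct and takes essentially the same route as the paper: it invokes \cref{thm:half-distributive} with $P = \NERecord$, $T = \Maybe$, and $\sigma = \filterNothings$, then verifies the triangle and pentagon by casework on the entries of the record, your three-block partition $U_{JJ}, U_{JN}, U_N$ being a mild refinement of the paper's two-case chase. You also spell out naturality of $\filterNothings$, which the paper leaves implicit; otherwise the arguments coincide.
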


\begin{proof}
Because we have \cref{thm:half-distributive}, this is just a diagram chase for the following diagrams.

\[\begin{tikzcd}
	& {\NERecord \circ \Maybe \circ \Maybe} \\
	{\Maybe \circ \NERecord \circ \Maybe} && {\NERecord \circ \Maybe} \\
	{\Maybe \circ \Maybe \circ \NERecord} && {\Maybe \circ \NERecord}
	\arrow["{\NERecord \circ \mu^{\Maybe}}", from=1-2, to=2-3]
	\arrow["\filterNothings", from=2-3, to=3-3]
	\arrow["{\mu^{\Maybe} \circ \NERecord}"', from=3-1, to=3-3]
	\arrow[from=2-1, to=3-1]
	\arrow["{\filterNothings \circ \Maybe}"', from=1-2, to=2-1]
	\arrow["{\Maybe \circ \filterNothings}"', from=2-1, to=3-1]
\end{tikzcd}\]

\[\begin{tikzcd}
  & \NERecord \\
  {\NERecord \circ \Maybe} && {\Maybe \circ \NERecord}
  \arrow["\filterNothings"', from=2-1, to=2-3]
  \arrow["{\NERecord \circ \eta^{\Maybe}}"', from=1-2, to=2-1]
  \arrow["{\eta^{\Maybe} \circ \NERecord}", from=1-2, to=2-3]
\end{tikzcd}\]

The second diagam chase is more straightforwards. If we start with a non-empty record \ensuremath{\Varid{r}\mathbin{::}\mathit{Record}_{\neq \emptyset}\;\Varid{a}}s, apply \ensuremath{\Conid{Just}} to all of the elements of \ensuremath{\Varid{r}}, then filter out the \ensuremath{\Conid{Nothing}}s (the left-bottom path in the diagram) we get \ensuremath{\Conid{Just}\;\Varid{r}\mathbin{::}\Conid{Maybe}\;(\mathit{Record}_{\neq \emptyset}\;\Varid{a})} (the right path in the diagram).

For the first diagram chase, if we start out with \ensuremath{\Varid{r}\mathbin{::}\mathit{Record}_{\neq \emptyset}\;(\Conid{Maybe}\;(\Conid{Maybe}\;\Varid{a}))}, then we can consider two cases. The first case is that all of the elements of \ensuremath{\Varid{r}} are either \ensuremath{\Conid{Nothing}} or \ensuremath{\Conid{Just}\;\Conid{Nothing}}. Then either path will lead to \ensuremath{\Conid{Nothing}} in the end. Otherwise, we will end up with \ensuremath{\Conid{Just}\;\Varid{r'}}, where \ensuremath{\Varid{r'}} is the non-empty record consisting of \ensuremath{\Varid{x}} where \ensuremath{\Varid{r}} has an element of the form \ensuremath{\Conid{Just}\;(\Conid{Just}\;\Varid{x})}. We are done.
\end{proof}

\hide{
\begin{hscode}\SaveRestoreHook
\column{B}{@{}>{\hspre}l<{\hspost}@{}}%
\column{E}{@{}>{\hspre}l<{\hspost}@{}}%
\>[B]{}\mathbf{module}\;\Conid{LiftingMonads}\;\mathbf{where}{}\<[E]%
\ColumnHook
\end{hscode}\resethooks
}

\section{Lifting cartesian monads to monads on categories} \label{appendix:lifting_monads}

In this appendix, we prove the following theorem, which involves concepts to be defined later.

\begin{theorem}
  Let $(T,\eta^{T},\mu^{T})$ be a cartesian monad on a locally cartesian category $\cC$. Then there is a natural lift of $T$ to a 2-monad $\CatCat(T)$ on $\CatCat(\cC)$, the 2-category of categories internal to $\cC$, such that the action on the underlying spans of internal categories is given by sending $X_{0} \leftarrow X_{1} \to X_{0}$ to $TX_{0} \leftarrow TX_{1} \to TX_{0}$.
\end{theorem}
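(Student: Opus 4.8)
The plan is to prove this conceptually, by exhibiting the internal-category construction as a 2-functor defined on an ambient 2-category in which a cartesian monad is literally a monad object, and then transporting the monad structure along that 2-functor. Let $\cK$ denote the 2-category whose objects are categories with pullbacks, whose 1-cells are pullback-preserving functors, and whose 2-cells are natural transformations. A cartesian monad on $\cC$ is exactly a monad on the object $\cC$ inside $\cK$: the endofunctor $T$ is a pullback-preserving endo-1-cell, and $\eta^{T}, \mu^{T}$ are 2-cells satisfying the usual monad equations. If I can show that $\CatCat(-)$ extends to a 2-functor $\cK \to \TwoCat$, then applying it to this monad yields a monad on $\CatCat(\cC)$ in $\TwoCat$, which is by definition a 2-monad; moreover its underlying endo-2-functor acts on the span $X_{0} \leftarrow X_{1} \to X_{0}$ of an internal category by applying $T$ degreewise, exactly as required.

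The heart of the argument is therefore the lemma that $\CatCat(-)$ is a 2-functor on $\cK$. On objects, given a pullback-preserving $F$ and an internal category $\bX = (X_{0}, X_{1}, s, t, e, c)$, I set $\CatCat(F)(\bX) = (FX_{0}, FX_{1}, Fs, Ft, Fe, \bar c)$, where the composition $\bar c$ is $Fc$ precomposed with the canonical comparison isomorphism $\phi \colon F(X_{1} \times_{X_{0}} X_{1}) \xrightarrow{\sim} FX_{1} \times_{FX_{0}} FX_{1}$ supplied by pullback preservation. The unit, associativity, and source/target axioms are equations between arrows in $\cC$; applying $F$ and inserting the comparison isomorphisms for binary and ternary composable pairs turns each axiom for $\bX$ into the corresponding axiom for $\CatCat(F)(\bX)$. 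On 1-cells one applies $F$ degreewise and checks compatibility with $\phi$; on 2-cells, an internal natural transformation is a map $\alpha \colon X_{0} \to Y_{1}$ satisfying source, target, and naturality equations, and $F\alpha \colon FX_{0} \to FY_{1}$ satisfies the $F$-images of those equations, hence is again an internal natural transformation. Functoriality in $F$ (for composites $G \circ F$ and for identities) follows from the uniqueness clause of the universal property of pullbacks.

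With the lemma in hand, transporting the monad is routine. The component $(\eta^{T})_{\bX} \colon \bX \to \CatCat(T)(\bX)$ is the internal functor with object and morphism parts $(\eta^{T})_{X_{0}}$ and $(\eta^{T})_{X_{1}}$; its compatibility with composition is precisely ordinary naturality of $\eta^{T}$ at the arrow $c$, once one observes via the universal property that $\phi \circ (\eta^{T})_{X_{1} \times_{X_{0}} X_{1}}$ equals the pairing of $(\eta^{T})_{X_{1}}$ with itself. That same naturality makes $(\eta^{T})_{(-)}$ a 2-natural transformation: its 2-naturality square at an internal natural transformation $\alpha$ reduces to $(\eta^{T})_{Y_{1}} \circ \alpha = T\alpha \circ (\eta^{T})_{X_{0}}$, i.e.\ naturality of $\eta^{T}$ at $\alpha$. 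The treatment of $\mu^{T}$ is identical, using that $T^{2}$ again preserves pullbacks, and the two monad laws transfer because they already hold componentwise in $\cC$.

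The main obstacle I anticipate is bookkeeping but genuinely fiddly: keeping the comparison isomorphisms $\phi$ coherent across the various pullbacks involved. Associativity of $\bar c$ forces $F$ to respect the threefold pullback $X_{1} \times_{X_{0}} X_{1} \times_{X_{0}} X_{1}$ compatibly with the two bracketings through binary pullbacks, and functoriality of $\CatCat(-)$ in $F$ requires the comparison isomorphism for $G \circ F$ to factor as expected through those for $G$ and $F$. These follow from uniqueness in the universal property of pullbacks, but they are where the care must go; everything else is a direct application of functoriality of $T$ together with the cartesian hypothesis that $T$ preserves pullbacks.
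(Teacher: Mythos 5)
Your proposal is correct in outline and shares the paper's top-level skeleton: both arguments run on the principle that 2-functors preserve monad objects, so it suffices to exhibit internal-category formation as a 2-functor out of an ambient 2-category in which the given cartesian monad lives as a monad object. Where you genuinely diverge is in how that key lemma is established. The paper never verifies 2-functoriality by hand: it factors the construction as a composite $\LCC \xrightarrow{\SpanCat} \DblCat \xrightarrow{\Mon} \VirtDblCat \xrightarrow{\ArrowTwoCat} \TwoCat$, identifying internal categories with monads (promonads) in the double category of spans and outsourcing each constituent 2-functor to the literature; you instead construct $\CatCat(-)$ directly on your 2-category $\cK$, applying $F$ degreewise and managing the comparison isomorphisms $\phi \colon F(X_{1} \times_{X_{0}} X_{1}) \xrightarrow{\sim} FX_{1} \times_{FX_{0}} FX_{1}$ explicitly. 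The paper in fact anticipates your route, remarking that an ``elementary'' proof likely exists but is exceedingly tedious; your outline correctly locates where the tedium concentrates (the cocycle condition $\phi_{G \circ F} = \phi_{G} \circ G(\phi_{F})$ needed for strict functoriality in $F$, and coherence across the ternary pullback for associativity), though executing those checks in full, with chosen pullbacks fixed so that $\CatCat(G \circ F) = \CatCat(G) \circ \CatCat(F)$ holds on the nose, is a nontrivial residue that your proposal flags rather than discharges. Two further points worth noting. First, your argument buys a mild strengthening: since the 2-cells of your $\cK$ are arbitrary natural transformations, you only ever use ordinary naturality of $\eta^{T}$ and $\mu^{T}$ (at $c$, and at the components of internal natural transformations), never the cartesianness of those transformations that the paper's definition of cartesian monad builds in via $\LCC$; so your proof applies to any monad whose underlying functor preserves pullbacks. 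Second, the paper's route buys reusability and brevity at the cost of heavier machinery ($\Mon$ on virtual double categories), whereas yours is self-contained and makes the degreewise action on spans manifest by construction rather than by unwinding the composite. Both are valid proofs of the stated theorem.
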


\begin{corollary}
  We can lift a cartesian monad on $\Set$ to a monad on $\CatCat$.
\end{corollary}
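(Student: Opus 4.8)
The plan is to instantiate the preceding theorem at the special case $\cC = \Set$ and then forget from a 2-monad down to a monad. All of the genuine work has already been done in proving the theorem, so the task here is to verify that $\Set$ meets the hypotheses and that the 2-category $\CatCat(\Set)$ coincides with $\CatCat$.

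First I would check that $\Set$ is a locally cartesian category. Since $\Set$ has all finite limits, in particular all pullbacks, each slice $\Set/X$ has finite products, which is exactly what is required. Hence any cartesian monad $T$ on $\Set$ — a monad whose underlying functor preserves pullbacks and whose unit $\eta^{T}$ and multiplication $\mu^{T}$ are cartesian natural transformations — satisfies the assumptions of the theorem.

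Second I would identify $\CatCat(\Set)$ with $\CatCat$. A category internal to $\Set$ is precisely a small category: the underlying span $X_{0} \leftarrow X_{1} \to X_{0}$ records the objects and morphisms together with their source and target maps, and the internal composition, identity, associativity, and unitality data are exactly the structure of a small category. Likewise internal functors are ordinary functors and internal natural transformations are ordinary natural transformations, so this gives an isomorphism of 2-categories $\CatCat(\Set) \cong \CatCat$. With these two observations in place, the theorem applies verbatim and produces a 2-monad $\CatCat(T)$ on $\CatCat(\Set) \cong \CatCat$ whose action on underlying spans sends $X_{0} \leftarrow X_{1} \to X_{0}$ to $T X_{0} \leftarrow T X_{1} \to T X_{0}$. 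Since a 2-monad is in particular a monad — forgetting its action on 2-cells leaves the 1-categorical monad structure intact — we obtain the desired monad on $\CatCat$.

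I do not expect a real obstacle: the difficulty has been discharged upstream in the theorem, and the corollary is a matter of confirming that the hypotheses are met. The single point worth a sentence of care is that the isomorphism $\CatCat(\Set) \cong \CatCat$ is compatible with the span presentation used by the theorem, so that the promised formula on objects, $X_{0} \leftarrow X_{1} \to X_{0} \mapsto T X_{0} \leftarrow T X_{1} \to T X_{0}$, genuinely describes the lifted monad's action on small categories rather than merely on their internal presentations.
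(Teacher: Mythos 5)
Your proposal is correct and takes the same route the paper intends: the paper states this corollary without proof precisely because it is the immediate instantiation of the preceding theorem at $\cC = \Set$, using that $\Set$ has all pullbacks (the paper's definition of locally cartesian) and that categories, functors, and natural transformations internal to $\Set$ are exactly small categories, functors, and natural transformations, so $\CatCat(\Set) \cong \CatCat$. Your two verification steps and the remark about forgetting the 2-cell action are exactly the (routine) content being elided.
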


This result follows fairly directly from some general machinery, so while we are unsure whether or not this particular result appears in the literature, it is no doubt ``obvious'' to a sufficiently experienced category theorist. Likely there is also an ``elementary'' proof of this result, however, ``elementary'' proofs in 2-category theory are exceedingly tedious.

The reader interested in following this appendix should refer to the discussion of Cartesian monads in \cite{leinster_Higher_2004} and the monad construction in \cite{fiore_Monads_2011} when confused about the brief presentation that we give. Essentially, this is all an elaboration of the following concept, which goes back to \cite{street_formal_1972}.

\begin{definition}
  Let $\bC$ be a 2-category. A \textbf{monad in $\bC$} consists of an object $X \in \bC$ with an endomorphism $S \colon X \to X$, along with a pair of 2-cells $\eta \colon 1 \to S$, $\mu \colon S^{2} \to S$ such that the following diagrams commute:
\[\begin{tikzcd}
	S & {S^2} & S & {S^3} & {S^2} \\
	& S && {S^2} & S
	\arrow["{\eta S}", from=1-1, to=1-2]
	\arrow[Rightarrow, no head, from=1-1, to=2-2]
	\arrow["\mu", from=1-2, to=2-2]
	\arrow["{S \eta}"', from=1-3, to=1-2]
	\arrow[Rightarrow, no head, from=1-3, to=2-2]
	\arrow["{\mu S}", from=1-4, to=1-5]
	\arrow["{S \mu}"', from=1-4, to=2-4]
	\arrow["\mu", from=1-5, to=2-5]
	\arrow["\mu"', from=2-4, to=2-5]
\end{tikzcd}\]
\end{definition}

\begin{example}
  A monad in $\CatCat$ is a category with a monad on it.
\end{example}

\begin{example}
  A monad in the 2-category $\TwoCat$ of 2-categories, 2-functors and 2-natural transformations is a 2-monad.
\end{example}

\begin{definition}
  A \textbf{locally cartesian category} is a category which has all pullbacks. A \textbf{cartesian functor} is a functor between locally cartesian categories which preserves pullbacks. A \textbf{cartesian natural transformation} is a natural transformation $\alpha \colon F \Rightarrow G$ such that all of the naturality squares
\[\begin{tikzcd}
	Fx & Gx \\
	Fy & Gy
	\arrow["{\alpha_x}", from=1-1, to=1-2]
	\arrow["{F f}"', from=1-1, to=2-1]
	\arrow["{G f}", from=1-2, to=2-2]
	\arrow["{\alpha_y}"', from=2-1, to=2-2]
\end{tikzcd}\]
  are pullbacks.
\end{definition}

\begin{proposition}
  There is a 2-category $\LCC$ of locally cartesian categories, cartesian functors, and cartesian natural transformations.
\end{proposition}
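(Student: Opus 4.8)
The plan is to realize $\LCC$ as a sub-2-category of $\CatCat$, the 2-category of all categories, functors, and natural transformations. Since every structural operation (composition of functors, vertical and horizontal composition of natural transformations, identities) is inherited from $\CatCat$, the associativity, unit, and interchange axioms hold automatically once we know the three chosen classes are closed under these operations. So the entire content reduces to three closure checks, organized by layer: objects, 1-cells, and 2-cells.

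For the 1-cells I would observe that the identity functor $1_\cC$ trivially preserves pullbacks, and that if $F \colon \cC \to \cD$ and $G \colon \cD \to \cE$ both preserve pullbacks then so does $G \circ F$, since $F$ sends a pullback square in $\cC$ to a pullback in $\cD$ and $G$ then sends that to a pullback in $\cE$. Hence the cartesian functors contain the identities and are closed under composition. For the 2-cells I would check three things. First, identities: the naturality square of $1_F \colon F \Rightarrow F$ at a morphism $f$ has identity arrows along its top and bottom and $F f$ along both sides, and such a square is always a pullback (pulling back along an identity returns the opposite leg), so $1_F$ is cartesian. Second, vertical composition: for cartesian $\alpha \colon F \Rightarrow G$ and $\beta \colon G \Rightarrow H$, the naturality square of $\beta \circ \alpha$ at $f$ is the horizontal paste of the squares of $\alpha$ and $\beta$, so by the pasting lemma for pullbacks the outer rectangle is again a pullback and $\beta \circ \alpha$ is cartesian.

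Third, horizontal composition, which I would reduce to whiskering. For $\alpha \colon F \Rightarrow G \colon \cC \to \cD$, left-whiskering by a functor $H \colon \cB \to \cC$ produces $\alpha H$ whose square at $f$ is exactly the square of $\alpha$ at $H f$, hence a pullback; right-whiskering by a cartesian $K \colon \cD \to \cE$ produces $K\alpha$ whose square at $f$ is $K$ applied to the square of $\alpha$, which is a pullback precisely because $K$ preserves pullbacks. A general horizontal composite factors as a vertical composite of two whiskerings, so it is cartesian by the previous two points. Once all closure is established, the 2-category axioms for $\LCC$ are just the $\CatCat$ axioms restricted to the chosen cells, and nothing further needs verification.

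The only steps carrying genuine mathematical content are the pasting lemma for pullbacks (driving vertical composition) and preservation of pullbacks under cartesian functors (driving right-whiskering); I expect the right-whiskering case to be the main thing to get right, since it is the one place where the hypothesis that 1-cells preserve pullbacks is load-bearing. Everything else is bookkeeping inherited from $\CatCat$.
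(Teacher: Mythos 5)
Your proposal is correct, and the paper in fact states this proposition with no proof at all, treating the verification as routine folklore; your argument is precisely the standard one it implicitly relies on. The three closure checks you isolate --- composites and identities of pullback-preserving functors, the pasting lemma for pullbacks handling vertical composition, and the reduction of horizontal composition to left- and right-whiskering (the latter being exactly where cartesianness of the 1-cell is needed) --- together with the observation that all 2-categorical axioms are inherited from $\CatCat$, constitute a complete proof, so nothing is missing.
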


\begin{definition}
  A \textbf{cartesian monad} is a monad in $\LCC$.
\end{definition}

\begin{theorem}
  If $F \colon \bC \to \bD$ is a 2-functor, and $(X,S,\eta,\mu)$ is a monad in $\bC$, then $(F(X), F(S), F(\eta), F(\mu))$ is a monad in $\bD$.
\end{theorem}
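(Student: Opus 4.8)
The plan is to exploit the fact that a strict 2-functor preserves, strictly, every operation that appears in the definition of a monad in a 2-category: composition of 1-cells, identity 1-cells, vertical composition of 2-cells, identity 2-cells, and whiskering (a horizontal composite of a 2-cell with an identity 2-cell). Since each monad axiom is simply an equality between two pasting composites assembled from $S$, $\eta$, and $\mu$ using exactly these operations, applying $F$ to each axiom will produce the corresponding axiom for $(F(X), F(S), F(\eta), F(\mu))$. So the proof is really just a check that $F$ commutes with everything in sight.

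First I would verify that the proposed data have the correct types. Because $F$ sends 1-cells to 1-cells preserving source and target, $F(S)$ is an endomorphism of $F(X)$. Because $F$ preserves composition of 1-cells, $F(S)^2 = F(S) \circ F(S) = F(S \circ S) = F(S^2)$, and likewise $F(S)^3 = F(S^3)$. Because $F$ preserves identity 1-cells, $F(1_X) = 1_{F(X)}$, so $F(\eta) \colon 1_{F(X)} \to F(S)$ is a unit-shaped 2-cell and $F(\mu) \colon F(S)^2 \to F(S)$ is a multiplication-shaped 2-cell, as required.

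Next I would transport the axioms. Associativity asserts that the two composites $S^3 \xrightarrow{\mu S} S^2 \xrightarrow{\mu} S$ and $S^3 \xrightarrow{S\mu} S^2 \xrightarrow{\mu} S$ agree, where $\mu S$ and $S\mu$ are whiskerings. Applying $F$ and using that it preserves whiskering ($F(\mu S) = F(\mu) F(S)$ and $F(S\mu) = F(S) F(\mu)$) together with vertical composition, I obtain that $F(S)^3 \xrightarrow{F(\mu) F(S)} F(S)^2 \xrightarrow{F(\mu)} F(S)$ and $F(S)^3 \xrightarrow{F(S) F(\mu)} F(S)^2 \xrightarrow{F(\mu)} F(S)$ agree, which is exactly associativity for the image monad. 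Identically, applying $F$ to the two unit triangles $S \xrightarrow{\eta S} S^2 \xrightarrow{\mu} S = 1_S$ and $S \xrightarrow{S \eta} S^2 \xrightarrow{\mu} S = 1_S$, and using $F(1_S) = 1_{F(S)}$, yields the two unit triangles for $F(\eta)$, $F(\mu)$. This completes the verification.

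The mathematical content here is genuinely slight, so the only point demanding care—and the closest thing to an obstacle—is the hypothesis that $F$ is a \emph{strict} 2-functor. Every identification used above is an equality, not a coherent isomorphism: were $F$ merely a pseudofunctor, the comparisons $F(S^2) \cong F(S)^2$ and $F(1_X) \cong 1_{F(X)}$ would be invertible 2-cells rather than equalities, and one would be forced to paste in these comparison cells and check additional coherence conditions before the diagrams would commute. Because the theorem assumes a genuine 2-functor, no such cells intervene and the axiom diagrams transport verbatim.
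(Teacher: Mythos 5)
Your proof is correct, and it is precisely the routine verification the paper has in mind: the paper states this theorem without any proof at all, treating it as immediate from the fact that a strict 2-functor preserves composition and identities of 1-cells, vertical and horizontal composition of 2-cells, and hence every pasting equation making up the monad axioms. Your closing remark about strictness (versus the comparison cells a mere pseudofunctor would force into the diagrams) correctly identifies the one hypothesis doing real work, so nothing is missing.
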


Our desired result will then follow if we can show that there is a 2-functor from $\LCC$ to $\TwoCat$ which sends a locally cartesian category $\sC$ to the 2-category of categories internal to $\sC$. We construct that 2-functor as the composite of 2-functors that can be found in the literature.
\[\begin{tikzcd}
	\LCC & \DblCat & \VirtDblCat & \TwoCat
	\arrow["\SpanCat", from=1-1, to=1-2]
	\arrow["\Mon", from=1-2, to=1-3]
	\arrow["\ArrowTwoCat", from=1-3, to=1-4]
\end{tikzcd}\]
In the above, $\DblCat$ is the 2-category of pseudo double categories, pseudo double functors, and tight natural transformations (natural transformations whose components are arrows, not proarrows). $\VirtDblCat$ is the 2-category of virtual double categories, virtual double functors, and tight natural transformations.

The 2-functor $\SpanCat$ takes a locally cartesian category $\cC$ to the double category where the objects are objects of $\cC$, the arrows are morphisms of $\cC$, the proarrows are spans in $\cC$, and the 2-cells are diagrams of the following shape
\[\begin{tikzcd}
	{A_1} & {X_1} & {B_1} \\
	{A_2} & {X_2} & {A_2}
	\arrow[from=1-1, to=2-1]
	\arrow[from=1-2, to=1-1]
	\arrow[from=1-2, to=1-3]
	\arrow[from=1-2, to=2-2]
	\arrow[from=1-3, to=2-3]
	\arrow[from=2-2, to=2-1]
	\arrow[from=2-2, to=2-3]
\end{tikzcd}\]
The fact that $\SpanCat$ is a 2-functor is a small modification of an exercise in \cite[3.7.5]{grandis_Higher_2019}.

$\Mon$ is the 2-functor from $\DblCat$ to $\VirtDblCat$ which takes a double category $\bD$ to the virtual double category of monads in $\bD$. More details on this can be found in \cite[5.3]{leinster_Higher_2004}, \cite[11]{shulman_Framed_2008}, or \cite[10]{lambert_Cartesian_2024}.

Finally, $\ArrowTwoCat$ is the 2-functor from $\VirtDblCat$ to $\TwoCat$ sending a virtual double category to the 2-category of objects, arrows and 2-cells with top and bottom given by identity proarrows.

Why does this composition of 2-functors get us what we want? Essentially, reasoning goes as follows. We first recall that internal categories in $\cC$ are promonads in $\SpanCat(\cC)$, the double category of spans in $\cC$. Let us unpack that statement.

An internal category in $\cC$ consists of a span $X_{0} \leftarrow X_{1} \to X_{0}$, along with span morphisms
\[\begin{tikzcd}
	{X_0} & {X_1} & {X_0} & {X_1} & {X_0} \\
	{X_0} && {X_1} && {X_0}
	\arrow[Rightarrow, no head, from=1-1, to=2-1]
	\arrow[from=1-2, to=1-1]
	\arrow[from=1-2, to=1-3]
	\arrow[shorten <=1pt, shorten >=1pt, Rightarrow, from=1-3, to=2-3]
	\arrow[from=1-4, to=1-3]
	\arrow[from=1-4, to=1-5]
	\arrow[Rightarrow, no head, from=1-5, to=2-5]
	\arrow[from=2-3, to=2-1]
	\arrow[from=2-3, to=2-5]
\end{tikzcd}\]
and
\[\begin{tikzcd}
	{X_0} & {X_0} & {X_0} \\
	{X_0} & {X_1} & {X_0}
	\arrow[Rightarrow, no head, from=1-1, to=2-1]
	\arrow[Rightarrow, no head, from=1-2, to=1-1]
	\arrow[Rightarrow, no head, from=1-2, to=1-3]
	\arrow[shorten <=2pt, shorten >=1pt, Rightarrow, from=1-2, to=2-2]
	\arrow[Rightarrow, no head, from=1-3, to=2-3]
	\arrow[from=2-2, to=2-1]
	\arrow[from=2-2, to=2-3]
\end{tikzcd}\]
which encode the composition and identity, respectively. This is precisely an object of $\Mon(\SpanCat(\cC))$, and moreover the 2-category of internal categories in $\cC$ is the precisely the arrow 2-category of $\Mon(\SpanCat(\cC))$. So we are done.

\hide{
\begin{hscode}\SaveRestoreHook
\column{B}{@{}>{\hspre}l<{\hspost}@{}}%
\column{E}{@{}>{\hspre}l<{\hspost}@{}}%
\>[B]{}\mathbf{module}\;\Conid{StrictificationViaBOFF}\;\mathbf{where}{}\<[E]%
\ColumnHook
\end{hscode}\resethooks
}

\section{Factorization of monads} \label{appendix:boff_factorization}

In this appendix, we prove the key lemma for the construction of the $\DtryCat$ 2-monad. Much of this proof depends on general facts about orthogonal factorization systems, which we then use in the specific case of the bo-ff factorization system mentioned in the text.

First of all, while we eventually want a 2-monad, we can use the following lemma in order to only deal with 1-category theory most of the time.

\begin{lemma} \label{lemma:unicity}
  If $F \colon \CatCat \to \CatCat$ is a 2-functor, let $F_{0}$ be the underlying functor. Then if $(F_{0}, \eta, \mu)$ forms a monad on $\CatCat$ viewed as a 1-category, $F$ is a 2-monad.
\end{lemma}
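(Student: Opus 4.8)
The plan is to notice that most of what is required is already in hand. The associativity and unit equations of a monad are equations between (1-)natural transformations that never mention $2$-cells, so they hold verbatim once $(F_{0},\eta,\mu)$ is a monad on the $1$-category $\CatCat$; and $F$ is a $2$-functor by hypothesis, as is $F^{2}$. Hence the only genuine content is to upgrade $\eta$ and $\mu$ from $1$-natural to $2$-natural transformations, i.e. to verify the whiskering law $\alpha_{\cB}\ast G\sigma = H\sigma\ast\alpha_{\cA}$ for every $2$-cell $\sigma\colon f\Rightarrow g$ with $f,g\colon\cA\to\cB$, where $(G,H,\alpha)$ is $(\mathrm{Id},F,\eta)$ for the unit and $(F^{2},F,\mu)$ for the multiplication. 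I would therefore isolate a single sub-lemma: every $1$-natural transformation $\alpha\colon G\Rightarrow H$ between $2$-endofunctors of $\CatCat$ is automatically $2$-natural.

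For the unit I would reduce this to a single universal instance. Testing the equation $\eta_{\cB}\ast\sigma = F\sigma\ast\eta_{\cA}$ on components, by whiskering with the points $\underline{a}\colon\mathbf{1}\to\cA$, collapses it to the statement that $\eta_{\cB}(\sigma_{a})=(F\sigma)_{\eta_{\cA}(a)}$ for each morphism $\sigma_{a}$ of $\cB$; realising $\sigma_{a}$ as a functor $\underline{m}\colon\mathbf{2}\to\cB$ and using $1$-naturality of $\eta$ at $\underline{m}$ and $\underline{a}$, everything is pinned to the one equation $\eta_{\mathbf{2}}\ast\theta = F\theta\ast\eta_{\mathbf{1}}$ on the walking arrow $\mathbf{2}$, where $\theta\colon\underline{0}\Rightarrow\underline{1}$ is the universal $2$-cell. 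The key trick is then the terminal object of $\mathbf{2}$: I would factor $\theta = \rho\ast\underline{0}$ through the canonical cone $\rho\colon 1_{\mathbf{2}}\Rightarrow \underline{1}\circ{!}$ whose component at $i$ is the unique arrow $i\to 1$. Applying $F$ (preservation of whiskering) gives $(F\theta)_{\eta_{\mathbf{1}}(\ast)}=(F\rho)_{\eta_{\mathbf{2}}(0)}$; evaluating the naturality square of the transformation $F\rho$ at the morphism $\eta_{\mathbf{2}}(e)$, and computing $Fc_{1}(\eta_{\mathbf{2}}(e))=\mathrm{id}$ from $1$-naturality at $!$ and $\underline{1}$, reduces the claim to $(F\rho)_{\eta_{\mathbf{2}}(1)}=\mathrm{id}$; but $\rho\ast\underline{1}=\mathrm{id}_{\underline{1}}$, so this component is an identity, and the equation falls out. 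Conceptually, $F$'s action on a $2$-cell pointing into a terminal object is forced to be trivial, so $1$-naturality already fixes the $2$-dimensional behaviour.

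The step I expect to be the main obstacle is carrying this through for the multiplication $\mu$, where the source functor is $F^{2}$ rather than the identity: the componentwise test now lands in $F^{2}\cA$ and $F^{2}\cB$, so the universal arrow $\theta$ on $\mathbf{2}$ cannot be pulled back as directly. Here I would instead use the arrow-category reduction, which is insensitive to the source: writing $\sigma = \kappa_{\cB}\ast\hat\sigma$ with $\hat\sigma\colon\cA\to\cB^{\mathbf{2}}$ the transpose and $\kappa_{\cB}\colon d_{0}\Rightarrow d_{1}$ the generic arrow on $\cB^{\mathbf{2}}$, the whiskering law for $\sigma$ follows from that for $\kappa_{\cB}$ by $2$-functoriality of $F$ and $F^{2}$ together with $1$-naturality of $\mu$ at $\hat\sigma$. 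The remaining task is to trivialize the generic arrow by the same terminal-object mechanism, now applied inside $F^{2}$; if that bookkeeping becomes unwieldy, the fallback is a direct diagram chase assembling the whiskering law for $\mu$ from the $2$-functoriality of $F$ and the interchange law. Once $\eta$ and $\mu$ are shown $2$-natural, $(\CatCat,F,\eta,\mu)$ is a monad in $\TwoCat$, which is exactly a $2$-monad, completing the proof.
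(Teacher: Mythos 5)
The paper offers no proof of this lemma at all: it simply cites Power's unicity theorem (\cite[3.4]{power_Unicity_2009}), whose content is exactly your sub-lemma that every 1-natural transformation between 2-endofunctors of $\CatCat$ is automatically 2-natural. So you are not paralleling the paper's argument but supplying the proof it outsources, which is genuinely more self-contained. Your unit case is complete and correct: the reduction by points to the single equation $\eta_{\mathbf{2}} \ast \theta = F\theta \ast \eta_{\mathbf{1}}$, the factorization $\theta = \rho \ast \underline{0}$ through the cone to the terminal object of $\mathbf{2}$, and the naturality square of $F\rho$ at $\eta_{\mathbf{2}}(e)$, whose two outer components you correctly compute to be identities, all check out.

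The multiplication case, however, is left as a sketch, and the fallback you name (``2-functoriality and the interchange law'') cannot succeed on its own: automatic 2-naturality is not a formal consequence of the 2-category axioms (it is false over a general base 2-category, which is why Power's result is a theorem and not an exercise), so some input specific to $\CatCat$ is indispensable. Fortunately, your primary mechanism does close the gap, and uniformly for any 1-natural $\alpha \colon G \Rightarrow H$ between 2-functors, handling $\eta$ and $\mu$ at once. Having reduced to the generic cell $\kappa_{\cB} \colon d_{0} \Rightarrow d_{1} \colon \cB^{\mathbf{2}} \to \cB$ via $\sigma = \kappa_{\cB} \ast \hat\sigma$ (this reduction of yours is valid, using only 1-naturality of $\alpha$ at $\hat\sigma$ and 2-functoriality of $G$ and $H$), factor $\kappa_{\cB} = d_{0} \ast \rho''$, where $i \colon \cB \to \cB^{\mathbf{2}}$ is the identity-arrow embedding and $\rho'' \colon 1_{\cB^{\mathbf{2}}} \Rightarrow i \circ d_{1}$ is the cone whose component at an object $m \colon b \to b'$ is the square $(m, 1_{b'}) \colon m \to 1_{b'}$; this is your terminal-object trick applied fiberwise over codomains. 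Both whiskerings of $\rho''$ are identities, $(i \circ d_{1}) \ast \rho'' = \mathrm{id}$ and $\rho'' \ast i = \mathrm{id}$, exactly as with your $\rho$ on $\mathbf{2}$. Then for any object $y$ of $G(\cB^{\mathbf{2}})$, naturality of $H\rho''$ at the morphism $\alpha\bigl((G\rho'')_{y}\bigr)$, combined with $G\bigl((i \circ d_{1}) \ast \rho''\bigr) = \mathrm{id}$, $H\rho'' \ast Hi = \mathrm{id}$, and 1-naturality of $\alpha$ at the 1-cell $i \circ d_{1}$, collapses the square to $(H\rho'')_{\alpha(y)} = \alpha\bigl((G\rho'')_{y}\bigr)$, i.e.\ 2-naturality at $\rho''$, hence at $\kappa_{\cB}$, hence at every $\sigma$. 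With that one insertion your proof is complete; note also that once the sub-lemma is stated for arbitrary $G, H$, your separate pointwise argument for $\eta$ becomes redundant, since the $\rho''$ computation specializes to it.
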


\begin{proof}
  This can be found in \cite[3.4]{power_Unicity_2009}.
\end{proof}

We start by recalling the definition of an orthogonal factorization system and some key facts about orthogonal factorization systems.

We have referred to \cite{joyal_Factorisation_2020} for an organized presentation of factorization systems, but the concept of a factorization system is well-known within category theory going back at least to the 1970s.

\begin{definition}
  An factorization system on a category $\cC$ is a pair $(\cL,\cR)$ of classes of maps in $\cC$ such that:
  \begin{enumerate}
    \item Every morphism $f \colon A \to B$ in $\cC$ admits a factorization $f = u \cmp p \colon A \to E \to B$, with $u \in \cL$ and $p \in \cR$.
    \item The classes $\cL$ and $\cR$ contain the isomorphisms and are closed under composition.
  \end{enumerate}
\end{definition}

Factorization systems have some nice properties beyond what is obvious from their definition. One of these properties is \emph{orthogonality}.

\begin{definition}
  Let $\cC$ be a category, and let $f \colon X \to Y$ and $g \colon Z \to W$ be maps in $\cC$. Then $f$ is left orthogonal to $g$ (equivalently, $g$ is right orthogonal to $f$) if for all $x \colon X \to Z$, $y \colon Y \to W$ such that the square commutes
\[\begin{tikzcd}
	X & Z \\
	Y & W
	\arrow["x", from=1-1, to=1-2]
	\arrow["f"', from=1-1, to=2-1]
	\arrow["g", from=1-2, to=2-2]
	\arrow["y"', from=2-1, to=2-2]
\end{tikzcd}\]
there exists a unique $u \colon Y \to Z$ that makes the following commute,
\[\begin{tikzcd}
    X & Z \\
    Y & W
    \arrow["x", from=1-1, to=1-2]
    \arrow["f"', from=1-1, to=2-1]
    \arrow["g", from=1-2, to=2-2]
    \arrow["u", from=2-1, to=1-2]
    \arrow["y"', from=2-1, to=2-2]
  \end{tikzcd}\]
\end{definition}

We then have the following fact about factorization systems.

\begin{proposition}
  Given a factorization system $(\cL, \cR)$, any morphism $f \in \cL$ is left orthogonal to any morphism $g \in cR$.
\end{proposition}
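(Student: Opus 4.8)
The plan is to derive orthogonality from the \emph{essential uniqueness} of $(\cL,\cR)$-factorizations: the fact that any two factorizations of a fixed map as an $\cL$-map followed by an $\cR$-map are connected by a unique isomorphism compatible with both. This is the genuine content of the statement, so I would first isolate it as a lemma. It is also the point at which the two clauses of the Definition as stated must be supplemented — without some uniqueness clause the claim is simply false (e.g. if $\cL = \cR$ is the class of all maps, both clauses hold since $f = f \cmp \id$, yet orthogonality fails). I take essential uniqueness as the intended third ingredient of ``factorization system'' in the sense of the cited notes. So fix $f \colon X \to Y$ in $\cL$, $g \colon Z \to W$ in $\cR$, and a commuting square, i.e. maps $x \colon X \to Z$ and $y \colon Y \to W$ with $f \cmp y = x \cmp g$; write $h := f \cmp y = x \cmp g$ for the common diagonal composite.

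For existence of the lift, I would factor the two input legs: $x = i \cmp p$ with $i \colon X \to A$ in $\cL$ and $p \colon A \to Z$ in $\cR$, and $y = j \cmp q$ with $j \colon Y \to B$ in $\cL$ and $q \colon B \to W$ in $\cR$. Then $h = i \cmp (p \cmp g)$ and $h = (f \cmp j) \cmp q$ are two $(\cL,\cR)$-factorizations of $h$ (using closure of both classes under composition), so essential uniqueness supplies a unique iso $\phi \colon A \to B$ with $i \cmp \phi = f \cmp j$ and $\phi \cmp q = p \cmp g$. I would set $u := j \cmp \phi^{-1} \cmp p \colon Y \to Z$ and verify the two triangles by substitution: $f \cmp u = (f \cmp j) \cmp \phi^{-1} \cmp p = i \cmp \phi \cmp \phi^{-1} \cmp p = i \cmp p = x$, and dually $u \cmp g = j \cmp \phi^{-1} \cmp (p \cmp g) = j \cmp \phi^{-1} \cmp \phi \cmp q = j \cmp q = y$.

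The main obstacle is uniqueness of the lift, since the naive attempt ``two lifts fill the same square, hence they agree'' is exactly what we are proving and is circular. My plan here is the factor-the-candidate trick: given any lift $v \colon Y \to Z$, factor it as $v = a \cmp c$ with $a \colon Y \to C$ in $\cL$ and $c \colon C \to Z$ in $\cR$. The equations $f \cmp v = x$ and $v \cmp g = y$ then exhibit $(f \cmp a) \cmp c$ and $a \cmp (c \cmp g)$ as factorizations of $x$ and of $y$ respectively, so essential uniqueness yields isos $\xi \colon C \to A$ (with $f \cmp a \cmp \xi = i$ and $\xi \cmp p = c$) and $\theta \colon C \to B$ (with $a \cmp \theta = j$ and $\theta \cmp q = c \cmp g$). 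A short computation shows that $\xi^{-1} \cmp \theta$ satisfies the two equations characterizing $\phi$, whence $\xi^{-1} \cmp \theta = \phi$ by the uniqueness clause of essential uniqueness; substituting $j = a \cmp \xi \cmp \phi$ and $p = \xi^{-1} \cmp c$ into $u = j \cmp \phi^{-1} \cmp p$ then collapses it to $a \cmp c = v$. Hence every lift equals $u$, giving both existence and uniqueness. I expect the only delicate bookkeeping to be keeping the composition order straight (everything is written diagrammatically with $\cmp$) and confirming that the comparison iso produced by the candidate lift genuinely satisfies the defining equations of $\phi$.
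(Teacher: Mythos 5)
Your proof is correct, but there is no in-paper argument to compare against: the paper states this proposition as a known fact, with \cite{joyal_Factorisation_2020} as the background reference, and gives no proof. Your write-up therefore supplies the omitted details, and it does so along exactly the lines of the cited source. Two remarks. First, your diagnosis of the definition is right and is the most valuable observation here: as literally stated (existence of factorizations, plus both classes containing the isomorphisms and being closed under composition), the definition does \emph{not} imply orthogonality, and your counterexample with $\cL = \cR$ the class of all maps is genuine (in $\Set$, the square with $f \colon \emptyset \to 1$ on the left and $g \colon 2 \to 1$ on the right has two fillers). The uniqueness-up-to-unique-isomorphism clause you import is indeed part of the definition in Joyal's notes, so the paper's definition should be read as including it.

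Second, the argument itself checks out in the paper's diagrammatic order. For existence: factoring $x = i \cmp p$ and $y = j \cmp q$ exhibits $i \cmp (p \cmp g)$ and $(f \cmp j) \cmp q$ as two $(\cL,\cR)$-factorizations of the common diagonal (closure under composition is used correctly on each side), and your filler $u = j \cmp \phi^{-1} \cmp p$ satisfies $f \cmp u = i \cmp \phi \cmp \phi^{-1} \cmp p = x$ and $u \cmp g = j \cmp \phi^{-1} \cmp \phi \cmp q = y$. For uniqueness, your factor-the-candidate argument is the right way to avoid the circularity you flag: given $v = a \cmp c$, the isomorphisms $\xi$ and $\theta$ obtained from the two legs compose to $\xi^{-1} \cmp \theta$, which satisfies $i \cmp \xi^{-1} \cmp \theta = f \cmp a \cmp \theta = f \cmp j$ and $\xi^{-1} \cmp \theta \cmp q = \xi^{-1} \cmp c \cmp g = p \cmp g$, so it equals $\phi$ by the uniqueness half of essential uniqueness --- the one place where ``unique isomorphism,'' rather than mere ``isomorphism,'' is indispensable --- and the substitution collapsing $u$ to $a \cmp c = v$ is correct. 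In short: a complete and correct proof of a statement the paper asserts by citation, together with a legitimate repair of the paper's definition.
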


We can use this result to prove the following proposition.

\begin{proposition} \label{prop:factorize_functors}
  If $(\cL,\cR)$ is a factorization system on $\cC$, $\cD$ is any other category, and $F,G \colon \cD \to \cC$ are functors with $\alpha \colon F \Rightarrow G$ a natural transformation, then $\alpha$ factors as $\alpha = \upsilon \cmp \pi \colon F \Rightarrow H \Rightarrow G$, where $\upsilon_{d} \in \cL$ and $\pi_{d} \in \cR$ for all $d \colon \cD$.
\end{proposition}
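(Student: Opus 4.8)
The plan is to factor $\alpha$ objectwise and then use the \emph{uniqueness} in the orthogonality property to promote the pointwise data into a functor together with a pair of natural transformations. First I would apply the factorization system at each object: for every $d \colon \cD$, factor the component $\alpha_{d} \colon F(d) \to G(d)$ as $\alpha_{d} = \upsilon_{d} \cmp \pi_{d}$ with $\upsilon_{d} \in \cL$ and $\pi_{d} \in \cR$, and declare $H(d)$ to be the chosen intermediate object. This fixes the object part of $H$ and supplies the candidate components $\upsilon_{d}$ and $\pi_{d}$ of the would-be natural transformations $\upsilon \colon F \Rightarrow H$ and $\pi \colon H \Rightarrow G$.

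Next I would define $H$ on morphisms. Given $g \colon d \to d'$ in $\cD$, naturality of $\alpha$ gives $F(g) \cmp \alpha_{d'} = \alpha_{d} \cmp G(g)$; inserting the two factorizations shows that the square
\[\begin{tikzcd}
  F(d) & H(d') \\
  H(d) & G(d')
  \arrow["{F(g)\cmp\upsilon_{d'}}", from=1-1, to=1-2]
  \arrow["{\upsilon_d}"', from=1-1, to=2-1]
  \arrow["{\pi_{d'}}", from=1-2, to=2-2]
  \arrow["{\pi_d\cmp G(g)}"', from=2-1, to=2-2]
\end{tikzcd}\]
commutes. Since $\upsilon_{d} \in \cL$ is left orthogonal to $\pi_{d'} \in \cR$ by the orthogonality of the factorization system, there is a \emph{unique} diagonal filler $H(g) \colon H(d) \to H(d')$ satisfying $\upsilon_{d} \cmp H(g) = F(g) \cmp \upsilon_{d'}$ and $H(g) \cmp \pi_{d'} = \pi_{d} \cmp G(g)$. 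But these two equations are exactly the naturality squares for $\upsilon$ and $\pi$ at $g$, so naturality of both transformations is automatic once $H$ is shown to be a functor.

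It then remains to check functoriality of $H$, and this is where uniqueness of the filler does all the work. For identities, $1_{H(d)}$ satisfies both defining equations of $H(1_d)$ (using $F(1_d)=1_{F(d)}$ and $G(1_d)=1_{G(d)}$), so uniqueness forces $H(1_d)=1_{H(d)}$. For a composite $g \colon d \to d'$, $g' \colon d' \to d''$, I would verify that $H(g)\cmp H(g')$ satisfies the two equations characterizing $H(g\cmp g')$: on the $\upsilon$ side, $\upsilon_{d}\cmp(H(g)\cmp H(g')) = F(g)\cmp\upsilon_{d'}\cmp H(g') = F(g)\cmp F(g')\cmp\upsilon_{d''} = F(g\cmp g')\cmp\upsilon_{d''}$, and symmetrically $\,(H(g)\cmp H(g'))\cmp\pi_{d''} = \pi_{d}\cmp G(g\cmp g')\,$ on the $\pi$ side; uniqueness then yields $H(g\cmp g')=H(g)\cmp H(g')$.

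I do not expect a genuine obstacle here: the only conceptual point is recognizing that the square to be filled is precisely the naturality square of $\alpha$ with the factorizations substituted in, and that it is the uniqueness clause of orthogonality (not merely existence) that turns the pointwise factorization into a functorial one. The sole arbitrary choice is the intermediate object $H(d)$ for each $d$; any two choices produce naturally isomorphic factorizations, which is the sense in which the resulting $H$ is determined up to isomorphism.
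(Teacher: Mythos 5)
Your proposal is correct and follows essentially the same route as the paper: factor $\alpha$ objectwise, then use the \emph{unique} diagonal filler given by orthogonality of $\upsilon_d$ against $\pi_{d'}$ to define $H$ on morphisms, noting that the filler's two defining equations are precisely the naturality squares for $\upsilon$ and $\pi$. In fact you go slightly further than the paper, which leaves the functoriality of $H$ (identities and composites via uniqueness of fillers) implicit, whereas you verify it explicitly.
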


\begin{proof}
  For each $d \colon \cD$, let $H(d)$, $\upsilon_{d}$, $\pi_{d}$ be given by a factorization
\[\begin{tikzcd}
	{F(d)} && {G(d)} \\
	& {H(d)}
	\arrow["{\alpha_d}", from=1-1, to=1-3]
	\arrow["{\upsilon_d}"', from=1-1, to=2-2]
	\arrow["{\pi_d}"', from=2-2, to=1-3]
\end{tikzcd}\]
It remains to define $H$ on morphisms and then to show that $\upsilon$ and $\pi$ are natural. We do this using the fact that for any $f \colon d \to d'$ in $\cD$, the morphism $\upsilon_{d}$ is left orthogonal to $\pi_{d'}$, so we may define $H(f)$ as the unique morphism making the following commute
\[\begin{tikzcd}
	{F(d)} & {F(d')} & {H(d')} \\
	\\
	{H(d)} & {G(d)} & {G(d')}
	\arrow["{F(f)}", from=1-1, to=1-2]
	\arrow["{\upsilon_d}"', from=1-1, to=3-1]
	\arrow["{\upsilon_{d'}}", from=1-2, to=1-3]
	\arrow["{\pi_{d'}}", from=1-3, to=3-3]
	\arrow["{H(f)}", from=3-1, to=1-3]
	\arrow["{\pi_{d'}}"', from=3-1, to=3-2]
	\arrow["{G(f)}"', from=3-2, to=3-3]
\end{tikzcd}\]
Note that the above diagram actually also contains the naturality squares for $\upsilon$ and $\pi$, so we are done.
\end{proof}

This extends to the following proposition.

\begin{proposition}
  Given a factorization system $(\cL, \cR)$ on a category $\cC$, and given any other category $\cD$, there is a factorization system $(\cL^{\cD}, \cR^{\cD})$ on $\cC^{\cD}$ where $\cL^{\cD}$ consists of natural transformations whose components are in $\cL$ and $\cR^{\cD}$ consists of natural transformations whose components are in $\cR$.
\end{proposition}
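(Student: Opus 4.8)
The plan is to verify the two defining conditions of a factorization system for the pair $(\cL^{\cD}, \cR^{\cD})$ directly, observing that the only substantial work has already been carried out earlier in the appendix. Concretely, I would treat the existence-of-factorizations axiom and the closure axiom separately.

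First I would dispatch the factorization condition. Every natural transformation $\alpha \colon F \Rightarrow G$ between functors $F, G \colon \cD \to \cC$ must admit a factorization $\alpha = \upsilon \cmp \pi$ with $\upsilon \in \cL^{\cD}$ and $\pi \in \cR^{\cD}$. But this is exactly the content of \cref{prop:factorize_functors}: that proposition produces an intermediate functor $H$ together with natural transformations $\upsilon \colon F \Rightarrow H$ and $\pi \colon H \Rightarrow G$ whose components $\upsilon_d$ and $\pi_d$ lie in $\cL$ and $\cR$ respectively, and by the very definition of $\cL^{\cD}$ and $\cR^{\cD}$ this says $\upsilon \in \cL^{\cD}$ and $\pi \in \cR^{\cD}$. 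So this condition is immediate.

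For the closure condition I would argue componentwise, using that both composition and identities in the functor category $\cC^{\cD}$ are computed pointwise. A natural transformation $\alpha$ is an isomorphism in $\cC^{\cD}$ precisely when each component $\alpha_d$ is an isomorphism in $\cC$; since $\cL$ and $\cR$ each contain all isomorphisms of $\cC$, every such component lies in $\cL$ (resp.\ $\cR$), whence $\alpha \in \cL^{\cD}$ (resp.\ $\cR^{\cD}$). Likewise, if $\alpha, \beta \in \cL^{\cD}$ are composable then $(\alpha \cmp \beta)_d = \alpha_d \cmp \beta_d$ lies in $\cL$ because $\cL$ is closed under composition, so $\alpha \cmp \beta \in \cL^{\cD}$, and the identical argument works for $\cR^{\cD}$.

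Having checked both axioms, $(\cL^{\cD}, \cR^{\cD})$ is a factorization system, and the corresponding orthogonality is then automatic by applying the general orthogonality proposition stated above to this new system. I do not anticipate a genuine obstacle: the only delicate point — the functoriality of the intermediate factor $H$ and the naturality of $\upsilon$ and $\pi$ — is already packaged inside \cref{prop:factorize_functors}, so the present proposition reduces to that result together with the routine observation that the factorization-system axioms are pointwise conditions preserved by the functor-category structure.
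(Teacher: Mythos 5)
Your proof is correct and takes essentially the same route as the paper, which states this proposition without an explicit proof as an immediate extension of \cref{prop:factorize_functors}: existence of factorizations is exactly that proposition, and the remaining axioms (containing isomorphisms, closure under composition) hold because isomorphisms and composition in $\cC^{\cD}$ are computed componentwise. Under the paper's stated definition of a factorization system, nothing further is required.
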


Finally, we need one more technical result before we can prove our main theorem.

\begin{proposition}
  Let $(\cL, \cR)$ be a factorization system on $\cC$, and let $F,G \colon \cC \to \cC$ be endofunctors such that at least one of $F$ and $G$ preserves the factorization system. Then for $\alpha \colon F \Rightarrow G$ in $\cL^{\cC}$, $\alpha^{2} \colon F^{2} \Rightarrow G^{2}$ is also in $\cL^{\cC}$.
\end{proposition}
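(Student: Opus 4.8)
The plan is to unwind $\alpha^{2}$ as the horizontal (Godement) composite $\alpha \ast \alpha \colon F^{2} \Rightarrow G^{2}$. By the interchange law this composite can be computed along either of two paths, through $FG$ or through $GF$, and the first thing I would record is that these two paths agree. Concretely, at each object $X \colon \cC$ the component $(\alpha^{2})_{X} \colon F^{2}X \to G^{2}X$ can be written either as
\[ F^{2}X \xrightarrow{F(\alpha_{X})} FGX \xrightarrow{\alpha_{GX}} G^{2}X \]
or as
\[ F^{2}X \xrightarrow{\alpha_{FX}} GFX \xrightarrow{G(\alpha_{X})} G^{2}X, \]
and the equality of these two composites is precisely the naturality square of $\alpha$ instantiated at the morphism $\alpha_{X} \colon FX \to GX$. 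Having both factorizations available is what lets us pick whichever one suits the hypothesis.

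The key observation is then that in each factorization one leg is $\alpha$ evaluated at an object, hence automatically in $\cL$ since $\alpha \in \cL^{\cC}$, while the other leg is $F$ or $G$ applied to the $\cL$-map $\alpha_{X}$. So I would split into the two cases of the hypothesis. If $F$ preserves the factorization system, use the path through $FG$: here $\alpha_{X} \in \cL$ gives $F(\alpha_{X}) \in \cL$, while $\alpha_{GX} \in \cL$ directly, so $(\alpha^{2})_{X} = F(\alpha_{X}) \cmp \alpha_{GX}$ lies in $\cL$ because $\cL$ is closed under composition (clause (2) of the definition of a factorization system). Symmetrically, if $G$ preserves the factorization system, use the path through $GF$: now $\alpha_{FX} \in \cL$ and $G(\alpha_{X}) \in \cL$, and the composite again lies in $\cL$. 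Since this holds for every $X$, all components of $\alpha^{2}$ are in $\cL$, i.e.\ $\alpha^{2} \in \cL^{\cC}$.

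There is essentially no hard step here; the entire content is in choosing the decomposition that puts the functor-image leg ($F(\alpha_{X})$ or $G(\alpha_{X})$) under the functor assumed to preserve $\cL$, and then invoking closure of $\cL$ under composition. The only point deserving a moment's care is that ``preserves the factorization system'' is used only in the direction $F(\cL) \subseteq \cL$ (resp.\ $G(\cL) \subseteq \cL$); the $\cR$-half of that hypothesis plays no role in this particular statement. I would close by remarking that the same argument applies verbatim to show $\alpha^{n} \in \cL^{\cC}$ for all $n$, which is presumably the downstream use when iterating a lifted monad endofunctor.
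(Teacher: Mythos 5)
Your proof is correct and is essentially the paper's own argument: the paper likewise observes that $\alpha^{2}_{c}$ is the diagonal of the naturality square through $F(G(c))$ and $G(F(c))$, so that one leg is a component of $\alpha$ and the other is $F$ or $G$ applied to an $\cL$-map, and then concludes by closure of $\cL$ under composition. Your explicit case split on whether $F$ or $G$ preserves the system (and your remark that only the $\cL$-half of preservation is used) simply spells out what the paper leaves implicit in its one-line proof.
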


\begin{proof}
  This follows immediately by the definition of the components of $\alpha^{2}$
\[\begin{tikzcd}
	{F^2(c)} & {F(G(c))} \\
	{G(F(c))} & {G^2(c)}
	\arrow["{F(\alpha_c)}", from=1-1, to=1-2]
	\arrow["{\alpha_{F(c)}}"', from=1-1, to=2-1]
	\arrow["{\alpha^2_c}"{description}, from=1-1, to=2-2]
	\arrow["{\alpha_{G(c)}}", from=1-2, to=2-2]
	\arrow["{G(\alpha_c)}"', from=2-1, to=2-2]
\end{tikzcd}\]
 and the fact that $\cL$ is closed under composition.
\end{proof}

We now reach our main technical lemma.

\begin{lemma} \label{lemma:monad_factorization}
  Let $\cC$ be a category with a factorization system $(\cL, \cR)$. Suppose that $T$ and $S$ are monads on $\cC$ that each preserve the factorization system, and $\alpha \colon T \to S$ is a monadic natural transformation, i.e. the diagram

\[\begin{tikzcd}
	{T^2} & {S^2} \\
	T & S \\
	1 & 1
	\arrow["{\alpha^2}", from=1-1, to=1-2]
	\arrow["{\mu^T}"', from=1-1, to=2-1]
	\arrow["{\mu^S}", from=1-2, to=2-2]
	\arrow["\alpha"', from=2-1, to=2-2]
	\arrow["{\eta^T}", from=3-1, to=2-1]
	\arrow[Rightarrow, no head, from=3-1, to=3-2]
	\arrow["{\eta^S}"', from=3-2, to=2-2]
\end{tikzcd}\]
commutes. Then if $\upsilon \cmp \pi \colon T \Rightarrow R \Rightarrow S$ is the factorization of $\alpha$ as given in \cref{prop:factorize_functors}, then there is a monad structure $\mu^{R}, \eta^{R}$ on $R$ such that the following commutes:
\[\begin{tikzcd}
	{T^2} & R^{2} & {S^2} \\
	T & R & S \\
	1 & 1 & 1
	\arrow["{\upsilon^2}", from=1-1, to=1-2]
	\arrow["{\mu^T}"', from=1-1, to=2-1]
	\arrow["{\pi^2}", from=1-2, to=1-3]
	\arrow["{\mu^R}"', from=1-2, to=2-2]
	\arrow["{\mu^S}", from=1-3, to=2-3]
	\arrow["\upsilon"', from=2-1, to=2-2]
	\arrow["\pi"', from=2-2, to=2-3]
	\arrow["{\eta^T}", from=3-1, to=2-1]
	\arrow[Rightarrow, no head, from=3-1, to=3-2]
	\arrow["{\eta^R}", from=3-2, to=2-2]
	\arrow[Rightarrow, no head, from=3-2, to=3-3]
	\arrow["{\eta^S}"', from=3-3, to=2-3]
\end{tikzcd}\]
\end{lemma}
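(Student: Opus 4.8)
The plan is to equip $R$ with multiplication and unit by a single diagonal lift, and then deduce the monad axioms from those of $T$ and $S$ by exploiting uniqueness of lifts. I work throughout in the induced factorization system $(\cL^{\cC}, \cR^{\cC})$ on $\cC^{\cC}$, where by construction $\upsilon \in \cL^{\cC}$ and $\pi \in \cR^{\cC}$. The first thing I would record is that the power $\upsilon^{n} \colon T^{n} \Rightarrow R^{n}$ again lies in $\cL^{\cC}$ for every $n$: each component $(\upsilon^{n})_{c}$ is a composite of maps of the form $T^{k}(\upsilon_{-})$ together with components $\upsilon_{-}$, and all of these lie in $\cL$ because $T$ preserves the factorization system and $\cL$ is closed under composition. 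This is just the componentwise argument of the preceding proposition, iterated; the case $n = 2$ is literally that proposition applied to $\upsilon$ with $T$ in the preserving role.

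Next I would define $\mu^{R}$. Consider the square with left edge $\upsilon^{2} \colon T^{2} \Rightarrow R^{2}$, right edge $\pi \colon R \Rightarrow S$, top edge $\mu^{T} \cmp \upsilon$, and bottom edge $\pi^{2} \cmp \mu^{S}$. This square commutes: since $\upsilon^{2} \cmp \pi^{2} = \alpha^{2}$ and $\upsilon \cmp \pi = \alpha$, both ways around equal $\mu^{T} \cmp \alpha = \alpha^{2} \cmp \mu^{S}$, which is exactly the monadic square for $\alpha$. As $\upsilon^{2} \in \cL^{\cC}$ is left orthogonal to $\pi \in \cR^{\cC}$, there is a unique $\mu^{R} \colon R^{2} \Rightarrow R$ with $\upsilon^{2} \cmp \mu^{R} = \mu^{T} \cmp \upsilon$ and $\mu^{R} \cmp \pi = \pi^{2} \cmp \mu^{S}$; these are precisely the two squares of the lemma involving $\mu^{R}$. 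For the unit I would simply set $\eta^{R} := \eta^{T} \cmp \upsilon$, so that the left unit square holds by definition and $\eta^{R} \cmp \pi = \eta^{T} \cmp \alpha = \eta^{S}$ gives the right one.

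It then remains to verify the monad axioms for $(R, \mu^{R}, \eta^{R})$, and here I would use the following cancellation principle, itself an instance of orthogonality: two transformations $f, g \colon R^{k} \Rightarrow R$ coincide as soon as $\upsilon^{k} \cmp f = \upsilon^{k} \cmp g$ and $f \cmp \pi = g \cmp \pi$, since both then fill the same lifting problem of $\upsilon^{k} \in \cL^{\cC}$ against $\pi \in \cR^{\cC}$, whose filler is unique. Applying this with $k = 3$ to associativity and $k = 1$ to the two unit laws, each required equation splits into two checks. Precomposing with $\upsilon^{k}$ and repeatedly using naturality of $\upsilon$ together with $\upsilon^{2} \cmp \mu^{R} = \mu^{T} \cmp \upsilon$ and $\eta^{R} = \eta^{T} \cmp \upsilon$ turns the $R$-axiom into the corresponding axiom for $T$ postcomposed with $\upsilon$; dually, postcomposing with $\pi$ and using naturality of $\pi$ with $\mu^{R} \cmp \pi = \pi^{2} \cmp \mu^{S}$ and $\eta^{R} \cmp \pi = \eta^{S}$ turns it into the axiom for $S$. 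Since $T$ and $S$ are monads, both checks succeed, and the cancellation principle delivers the axiom for $R$.

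The main obstacle is the bookkeeping in this last step: one must establish the intermediate whiskering identities, such as $\upsilon^{3} \cmp (R\mu^{R}) = (T\mu^{T}) \cmp \upsilon^{2}$, that let $\upsilon$ and $\pi$ slide past the whiskered multiplications. Each is routine — the displayed one follows by applying naturality of $\upsilon$ at the morphism $\mu^{R}$ and then the relation $\upsilon^{2} \cmp \mu^{R} = \mu^{T} \cmp \upsilon$, after which the outer $\upsilon_{R(-)} \cmp T(\upsilon_{-})$ reassembles into $\upsilon^{2}$ — but getting the diagrammatic order right in the triply-nested case demands care. The key point I would emphasize is that no new coherence input is required: everything reduces to orthogonality plus the axioms already in hand for $T$ and $S$.
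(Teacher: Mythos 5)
Your proposal is correct and follows essentially the same route as the paper: $\eta^{R}$ is defined as $\eta^{T} \cmp \upsilon$, $\mu^{R}$ is obtained as the unique diagonal filler of the same square (left edge $\upsilon^{2}$, right edge $\pi$), and the monad laws are deduced from uniqueness of fillers against $\upsilon^{k}$ and $\pi$ — your ``cancellation principle'' is exactly the paper's argument that $R\mu^{R} \cmp \mu^{R}$ and $\mu^{R}R \cmp \mu^{R}$ fill a common lifting square for $\upsilon^{3}$. The only differences are presentational: you make explicit the commutativity check for the defining square of $\mu^{R}$ and the unit laws, which the paper respectively leaves implicit and to the reader.
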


\begin{proof}
  It is simple to define $\eta^{R} \colon 1 \to R$ by $\eta^{R} = \eta^{T}\cmp \upsilon$; this makes the bottom rectangle commute by definition. For $\mu^{R}$, we invoke orthogonality of $\upsilon^{2}$ and $\pi$ to get:
\[\begin{tikzcd}
	{T^2} & T & R \\
	\\
	{R^2} & {S^2} & S
	\arrow["{\mu^T}", from=1-1, to=1-2]
	\arrow["{\upsilon^2}"', from=1-1, to=3-1]
	\arrow["\upsilon", from=1-2, to=1-3]
	\arrow["\pi", from=1-3, to=3-3]
	\arrow["{\mu^R}", from=3-1, to=1-3]
	\arrow["{\pi^2}"', from=3-1, to=3-2]
	\arrow["{\mu^S}"', from=3-2, to=3-3]
\end{tikzcd}\]
We now must show that $(R,\eta^{R},\mu^{R})$ satisfies the monad laws. We will show associativity; unitality is easier and left to the reader. Consider the following diagram
\begin{equation} \label{eq:factorization_cube}
\begin{tikzcd}
	& {T^3} & {R^3} & {S^3} \\
	& {T^2} & {R^2} & {S^2} \\
	{T^2} &&& {R^2} && {S^2} \\
	\\
	T &&& R && S
	\arrow["{\upsilon^3}", from=1-2, to=1-3]
	\arrow["{T\mu^T}"{description}, from=1-2, to=2-2]
	\arrow["{\mu^T T}"{description}, from=1-2, to=3-1]
	\arrow["{\pi^3}", from=1-3, to=1-4]
	\arrow["{R\mu^R}"{description}, from=1-3, to=2-3]
	\arrow["{\mu^R R}"{description}, from=1-3, to=3-4]
	\arrow["{S\mu^S}"{description}, from=1-4, to=2-4]
	\arrow["{\mu^S S}"{description}, from=1-4, to=3-6]
	\arrow["{\upsilon^2}"', from=2-2, to=2-3]
	\arrow["{\mu^T}"{description}, from=2-2, to=5-1]
	\arrow["{\pi^2}"', from=2-3, to=2-4]
	\arrow["{\mu^R}"{description}, from=2-3, to=5-4]
	\arrow["{\mu^S}"{description}, from=2-4, to=5-6]
	\arrow["{\upsilon^2}", from=3-1, to=3-4]
	\arrow["{\mu^T}"', from=3-1, to=5-1]
	\arrow["{\pi^2}", from=3-4, to=3-6]
	\arrow["{\mu^R}", from=3-4, to=5-4]
	\arrow["{\mu^S}", from=3-6, to=5-6]
	\arrow["{\upsilon}", from=5-1, to=5-4]
	\arrow["{\pi}", from=5-4, to=5-6]
  \end{tikzcd}
\end{equation}
The left and right squares are associativity for $T$ and $S$. The front squares and bottom rectangles squares commute by how we defined $R$ and $\mu^{R}$. Finally, the top and back squares commute by general arguments in the monoidal category of endofunctors. Now, consider the following commutative square.
\begin{equation} \label{eq:assoc_filler}
\begin{tikzcd}
	{T^3} & R \\
	{R^3} & S
	\arrow[from=1-1, to=1-2]
	\arrow["\upsilon^{3}"', from=1-1, to=2-1]
	\arrow["\pi", from=1-2, to=2-2]
	\arrow[from=2-1, to=2-2]
  \end{tikzcd}
\end{equation}
In (\ref{eq:assoc_filler}), the top morphism is any of the equal ways of getting from $T^{3}$ to $R$ in (\ref{eq:factorization_cube}) without going through $R^{3}$, and the bottom morphism is any of the equal ways of getting from $R^{3}$ to $S$ without going through $R$.

By the commutativity of the various squares in (\ref{eq:factorization_cube}), both $R \mu^{R} \cmp \mu^{R}$ and $\mu^{R} R \cmp \mu^{R}$ are fillers of (\ref{eq:assoc_filler}). Therefore, by uniqueness of fillers, we have $R \mu^{R} \cmp \mu^{R} = \mu^{R} R \cmp \mu^{R}$, as required for associativity. We are done.
\end{proof}

We can now prove the result cited in the text, that $\DtryCat$ is a 2-monad.

\begin{theorem}
  The assignment $\cC \mapsto \DtryCat(\cC)$ forms the action on objects of a 2-monad on $\CatCat$.
\end{theorem}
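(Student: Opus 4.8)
The plan is to deduce the theorem from \cref{lemma:monad_factorization} together with \cref{lemma:unicity}. By \cref{lemma:unicity}, once $\DtryCat$ is known to be a 2-functor it suffices to equip its underlying ordinary functor with a monad structure. That $\cC \mapsto \DtryCat(\cC)$ is functorial is \cref{prop:factorize_functors} applied, in the ambient category $\CatCat$ with its bo-ff factorization system, to the natural transformation $\PathFamily \colon \DtryCat_{0} \Rightarrow \FinFam$ between the endofunctors $\DtryCat_{0}$ and $\FinFam$ of $\CatCat$: this yields the middle functor $\DtryCat$ with natural $\PathFamily_{\mathit{bo}}$, $\PathFamily_{\mathit{ff}}$, and the construction extends to 2-cells by the same orthogonality, so $\DtryCat$ inherits 2-functoriality from $\DtryCat_{0}$. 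It then remains to produce $(\eta^{\DtryCat}, \mu^{\DtryCat})$ and verify the monad laws.

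For this I would apply \cref{lemma:monad_factorization} with $\cL$ the bijective-on-objects functors, $\cR$ the fully-faithful functors, $T = \DtryCat_{0}$, and $S$ a strict model of the finite-coproduct-completion 2-monad whose multiplication flattens a nested family by \emph{concatenating} indices, so that it is genuinely strictly associative — exactly as $\FinFam_{=}$ is made strict by list concatenation (cf.\ \cref{sec:fam_construction}). Since $S$ is equivalent to $\FinFam$ and $\PathFamily$ factors through it, the bo-ff factorization and its middle $\DtryCat$ are unchanged. The hypotheses to check are: $T = \DtryCat_{0}$ is a strict monad (\cref{appendix:lifting_monads}) that preserves the factorization system — its object action $\Dtry(-)$ preserves bijections, and it preserves fully-faithfulness because $\Dtry$ is a cartesian monad and so preserves the pullbacks witnessing ff-ness; $S$ likewise preserves the system; and $\PathFamily$ is monadic. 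Granting these, $\eta^{\DtryCat} = \eta^{\DtryCat_{0}} \cmp \PathFamily_{\mathit{bo}}$, the multiplication $\mu^{\DtryCat}$ is the unique orthogonal filler of the square assembled from $\mu^{\DtryCat_{0}}$, and associativity and unitality follow formally from uniqueness of orthogonal fillers, via the associativity-cube argument in the proof of \cref{lemma:monad_factorization}.

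The step I expect to be the main obstacle is the last hypothesis: that $\PathFamily$ intertwines the two multiplications \emph{strictly}, not merely up to isomorphism. On objects $\mu^{\DtryCat_{0}}$ flattens a directory-of-directories by sending a path-pair $(n,m)$ to its concatenation $n \ast m$, whereas the generic $\Fam$-multiplication flattens through a dependent sum whose elements are \emph{nested pairs}; these agree only up to the canonical isomorphism, which is exactly why $\Fam$ is only a pseudomonad (\cref{sec:fam_construction}). The resolution is that every index set $\Dtry[d]$ is a prefix-free subset of $\SymbolSet^{\ast}$, so concatenation is strictly associative and unital and, under the lexicographic ordering, concatenation of index sets agrees on the nose with concatenation of their orderings; choosing this representative for the coproducts defining $\mu^{S}$ turns $\PathFamily$ into a strict monad morphism while keeping the assignment of component morphisms coherent. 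Establishing this on-the-nose commutativity — simultaneously at the level of indices and of component morphisms — is the real work; the rest is formal, and \cref{lemma:unicity} then upgrades the resulting strict monad to the required 2-monad.
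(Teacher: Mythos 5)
Your proposal is correct and follows essentially the same route as the paper's proof: both invoke \cref{lemma:monad_factorization} after rerouting $\PathFamily$ from the pseudomonad $\FinFam$ to a strict skeletal target (the paper's $\DtryCat_0 \to \FinFam_{>} \to \FinFam_{=}$, using a fixed linear order on $\SymbolSet$ and the induced lexicographic order on $\Dtry[d]$), with strict monadicity of $\PathFamily$ coming exactly from your observation that concatenation of prefix-free path sets is strictly associative and order-preserving, and both finish with \cref{lemma:unicity}. The only difference is cosmetic: where you gesture at orthogonality for the action on 2-cells, the paper verifies directly that $\DtryCat(\alpha)$ is natural against the new morphisms of $\DtryCat(\cC)$ (a check that also follows abstractly since the fully faithful $\PathFamily_{\mathit{ff}}$ reflects commutativity of the naturality squares).
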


\begin{proof}
  Most of the heavy lifting is done by \cref{lemma:monad_factorization}. However, there are a couple loose ends to tie up.

  First of all, the definition of $\DtryCat$ in the text uses a factorization of $\PathFamily \colon \DtryCat_{0} \to \FinFam$. However, $\FinFam$ is not a 2-monad; it is a pseudomonad. Thus, in order to apply \cref{lemma:monad_factorization}, we must instead use a map $\DtryCat_{0} \to \FinFam_{=}$. We build this in the following way. First note that there is not a reasonable functor $\FinSet \to \FinSet_{=}$, where $\FinSet_{=}$ is the category with objects the sets $\{1,\ldots,n\}$ for $n \in \bN$ and with morphisms as functions. This is because such a functor would be tantamount to choosing a linear order on every finite set. However, there is a natural functor $\FinSet_{>} \to \FinSet_{=}$, where $\FinSet_{>}$ is the category of finite sets with chosen linear orders, though the morphisms are not necessarily order-preserving. Similarly, we can build a functor $\FinFam_{>}$ on $\CatCat$, which is analogous to $\FinFam$ except the indexing families are equipped with a linear order, and there is a natural transformation $\FinFam_{>} \to \FinFam_{=}$.

  We can exploit this fact because if we fix a linear order on $\SymbolSet$ this then induces a linear lexicographic order on $\Dtry[d] \PFSubset \SymbolSet^{\ast}$ for any $d \in \Dtry(1)$. Thus, the natural transformation $\DtryCat_{0} \to \FinFam$ in fact factors through $\DtryCat_{0} \to \FinFam_{>}$. We then compose this with the map $\FinFam_{>}$ to $\FinFam_{=}$. That this natural transformation is monadic is a consequence of the fact that the monad multiplication $\Dtry \circ \Dtry \to \Dtry$ is order-preserving for the lexicographic order.

  Secondly, we must show that $\DtryCat$ is a 2-functor. Suppose that $\cC$ and $\cD$ are categories, and $F,G \colon \cC \to \cD$ are functors, with $\alpha \colon F \Rightarrow G$ a natural transformation. Then $\DtryCat_{0}(\alpha)$ is a natural transformation from $\DtryCat_{0}(F)$ to $\DtryCat_{0}(G)$, which sends an object $(d \in \Dtry(1), X \colon \Dtry[d] \to \cC_{0})$ to the morphism $(d, X \cmp \alpha)$ from $(d, X \cmp F_{0})$ to $(d, X \cmp G_{0})$. We can then push this forward along the inclusion $\DtryCat_{0}(\cD) \to \DtryCat(\cD)$ to get a (possibly natural) transformation $\DtryCat(\alpha) \colon \DtryCat(F) \Rightarrow \DtryCat(G)$. We must check that it is still natural with respect to the morphisms in $\DtryCat(\cC)$ though.

  Let $(d \in \Dtry(1), X \colon \Dtry[d] \to \cC_{0})$ and $(d' \in \Dtry(1), X' \colon \Dtry[d'] \to \cC_{0})$ be objects of $\DtryCat(\cC)$, and consider a morphism
  \[\left(f_{0} \colon \Dtry[d] \to \Dtry[d'], f_{1} \colon \prod_{p \in \Dtry[d]} \Hom_{\cC}(X(p), X'(f_{1}(p)))\right)\]
  which can be pictured as
\[\begin{tikzcd}
	{\Dtry[d]} && {\Dtry[d']} \\
	& \cC
	\arrow["{f_0}", from=1-1, to=1-3]
	\arrow[""{name=0, anchor=center, inner sep=0}, "X"', from=1-1, to=2-2]
	\arrow[""{name=1, anchor=center, inner sep=0}, "{X'}", from=1-3, to=2-2]
	\arrow["{f_1}", shorten <=7pt, shorten >=7pt, Rightarrow, from=0, to=1]
\end{tikzcd}\]
  Then we must show that the following square commutes
\[\begin{tikzcd}
	{\DtryCat(F)((d,X))} && {\DtryCat(F)((d',X'))} \\
	{\DtryCat(G)((d,X))} && {\DtryCat(G)((d',X'))}
	\arrow["{\DtryCat(F)(f_0,f_1)}", from=1-1, to=1-3]
	\arrow["{\DtryCat(\alpha)_{(d,X)}}"', from=1-1, to=2-1]
	\arrow["{\DtryCat(\alpha)_{(d',X')}}", from=1-3, to=2-3]
	\arrow["{\DtryCat(G)(f_0,f_1)}"', from=2-1, to=2-3]
\end{tikzcd}\]
This square expands out to
\[\begin{tikzcd}
	{\Dtry[d]} &&&& {\Dtry[d']} \\
	\\
	&& \cD \\
	\\
	{\Dtry[d]} &&&& {\Dtry[d']}
	\arrow["{f_0}", from=1-1, to=1-5]
	\arrow[""{name=0, anchor=center, inner sep=0}, "{X\cmp F}"{description}, from=1-1, to=3-3]
	\arrow[Rightarrow, no head, from=1-1, to=5-1]
	\arrow[""{name=1, anchor=center, inner sep=0}, "{X' \cmp F}"{description}, from=1-5, to=3-3]
	\arrow[Rightarrow, no head, from=1-5, to=5-5]
	\arrow[""{name=2, anchor=center, inner sep=0}, "{X \cmp G}"{description}, from=5-1, to=3-3]
	\arrow["{f_0}"', from=5-1, to=5-5]
	\arrow[""{name=3, anchor=center, inner sep=0}, "{X' \cmp G}"{description}, from=5-5, to=3-3]
	\arrow["{f_1 \cmp F}", shorten <=14pt, shorten >=14pt, Rightarrow, from=0, to=1]
	\arrow["{X \cmp \alpha}"', shorten <=8pt, shorten >=8pt, Rightarrow, from=0, to=2]
	\arrow["{X' \cmp \alpha}", shorten <=8pt, shorten >=8pt, Rightarrow, from=1, to=3]
	\arrow["{f_1 \cmp G}"', shorten <=14pt, shorten >=14pt, Rightarrow, from=2, to=3]
\end{tikzcd}\]
The outer square clearly commutes, and for each element $p \in \Dtry[d]$, the inner square boils down to
\[\begin{tikzcd}
	{F(X(p))} & {F(X'(f_0(p)))} \\
	{G(X(p))} & {G(X'(f_0(p)))}
	\arrow["{F(f_1(p))}", from=1-1, to=1-2]
	\arrow["{\alpha_{X(p)}}"', from=1-1, to=2-1]
	\arrow["{\alpha_{X'(f_0(p))}}", from=1-2, to=2-2]
	\arrow["{G(f_1(p))}"', from=2-1, to=2-2]
\end{tikzcd}\]
which commutes by naturality of $\alpha$. We have thus checked that $\DtryCat$ is a 2-functor, and thus a 2-monad by \cref{lemma:unicity}. We are done.
\end{proof}


\end{document}